\documentclass[12pt,reqno,a4paper, english]{amsart}

\usepackage[top=3.53cm,bottom=2.7cm,left=2.3cm,right=2.3cm]{geometry}

\footskip=25pt
\setlength\parindent{0.7cm} 


\usepackage{amsmath, amsthm, amsfonts, amssymb,enumerate}
\usepackage{amsthm} 
\usepackage[mathscr]{euscript}  
\usepackage{mathtools}
\usepackage{slashed}
\usepackage{dsfont} 
\usepackage{upgreek}
\usepackage{nccmath}
\usepackage{stmaryrd}

\usepackage[dvipsnames]{xcolor} 
\usepackage{tikz}
\usepackage{microtype} 
\usepackage{comment} 
\usepackage[titletoc,title]{appendix}
\usepackage[font=small]{caption} 
\usepackage{systeme} 

\usepackage{etoolbox}


\usepackage{hyperref}
\hypersetup{colorlinks={true},linkcolor={awesome},citecolor={greenBleu},urlcolor={britishracinggreen}}
\definecolor{awesome}{rgb}{1, 0.1, 0}
\definecolor{amber}{rgb}{1.0, 0.75, 0.0}
\definecolor{greenBleu}{rgb}{0.0, 0.6, 0.5}
\definecolor{britishracinggreen}{rgb}{0.0, 0.26, 0.15}
\definecolor{caputmortuum}{rgb}{0.7, 0.55, 0.5}




\makeatletter
\newenvironment{figurehere}
{\def\@captype{figure}}
{}
\makeatother



\theoremstyle{plain}
\newtheorem{theorem}{Theorem}[section]
\newtheorem*{theorem*}{Theorem}
\newtheorem{prop}[theorem]{Proposition}

\newtheorem{corollary}[theorem]{Corollary}
\newtheorem*{corollary*}{Corollary}

\newtheorem{lemma}[theorem]{Lemma}

\theoremstyle{definition}
\newtheorem{defi}[theorem]{Definition}

\theoremstyle{remark}
\newtheorem{Rq}{Remark}[section]

\numberwithin{equation}{section}


\DeclareMathOperator{\eee}{e}

\DeclareMathOperator{\Id}{Id}


\newcommand{\be}{\begin{equation}}
	\newcommand{\ee}{\end{equation}}
\newcommand{\bes}{\begin{equation*}}
	\newcommand{\ees}{\end{equation*}}

\newcommand{\mrm}[1]{\mathrm{#1}}


\newcommand{\R}{\mathbb{R}} 
\newcommand{\C}{\mathbb{C}} 
\newcommand{\N}{\mathbb{N}} 
\newcommand{\D}{\mathbb D} 
\newcommand{\T}{\mathbb{T}}
\newcommand{\Z}{\mathbb{Z}}

\newcommand{\Nzero}{\mathbb{N}_{\geq0}}

\newcommand{\Sh}{S}

\newcommand{\qlq}{\forall}
\newcommand{\e}{\exists}

\newcommand{\p}{\partial}

\newcommand{\eps}{\varepsilon}
\newcommand{\la}{\lambda}

\newcommand{\Ltwo}{L^2_+(\T)}
\newcommand{\Htwo}{H^2_+(\mathbb{T})}

\newcommand{\lracc}[1]{\left\{#1\right\}} 

\newcommand{\uzeroeps}{u_0^\eps}
\newcommand{\uzeroepsbar}{\bar{u}_0^\eps}
\newcommand{\ueps}{u^\eps}

\newcommand{\fnzero}{f_n^{\,0}}

\newcommand{\fnzeroeps}{f_n^{\,\eps,0}}

\newcommand{\fnepst}{f_n^{\,\eps,t}}

\newcommand{\fnt}{f_n^{\,t}}

\newcommand{\B}[1]{\mathcal{B}_{L^2_+}(#1)}

\newcommand{\vertiii}[1]{{\vert\kern-0.25ex\vert\kern-0.25ex\vert #1 
		\vert\kern-0.25ex\vert\kern-0.25ex\vert}}

\newcommand{\va}[1]{\lvert#1\rvert}

\newcommand{\Ldeux}[1]{\left\| {#1} \right\|_{L^2(\T)}} 

\newcommand{\ps}[2]{  \left\langle #1\,|\, #2  \right\rangle }

\newcommand{\fr}{\widehat}




\setcounter{tocdepth}{3} 

\let\oldtocsection=\tocsection

\let\oldtocsubsection=\tocsubsection

\let\oldtocsubsubsection=\tocsubsubsection

\renewcommand{\tocsection}[2]{\hspace{0em}\oldtocsection{#1}{#2}}
\renewcommand{\tocsubsection}[2]{\hspace{2em}\oldtocsubsection{#1}{#2}}
\renewcommand{\tocsubsubsection}[2]{\hspace{4.7em}\oldtocsubsubsection{#1}{#2}}


\begin{document}


	\title[Global well--posedness of CS-DNLS equation]{On the global well--posedness of the Calogero--Sutherland Derivative nonlinear Schrödinger equation 
	}
	

	\author{Rana Badreddine}
	\address{Universit\'e Paris-Saclay, Laboratoire de math\'ematiques d'Orsay, UMR 8628 du CNRS, B\^atiment 307, 91405 Orsay Cedex, France}
	\email{\href{mailto: rana.badreddine@universite-paris-saclay.fr}{rana.badreddine@universite-paris-saclay.fr} }

	\subjclass{37 K10 primary}
	\keywords{Calogero--Sutherland--Moser systems, Derivative nonlinear Schrödinger equation (DNLS), Global well--posedness, Explicit solution, Hardy space, Integrable systems, Lax operators,  $L^2$--critical, Relatively compact orbits}
	\date{\today}

	\begin{abstract}
		We consider the Calogero--Sutherland derivative nonlinear Schrödinger equation in the focusing (with sign $+$) and defocusing case (with sign $-$)
		$$
		i\partial_tu+\partial_x^2u\,\pm\,\frac2i\,\p_x\Pi(|u|^2)u=0\,,\qquad (t,x)\in\R\times\T\,,
		$$
		where  $\Pi$ is the Szeg\H{o} projector $\Pi\left(\sum_{n\in \Z}\fr{u}(n)\eee^{inx}\right)=\sum_{n\geq 0 }\fr{u}(n)\eee^{inx}$\,. Thanks to a Lax pair formulation, we derive the \textit{explicit solution} to this equation. Furthermore, we prove the \textit{global well--posedness} for this $L^2$--critical equation in all the Hardy Sobolev spaces $H^s_+(\T)\,,$ $s\geq0\,,$ with small $L^2$--initial data in the focusing case, and for arbitrarily $L^2$--data in the defocusing case. In addition, we establish the relative compactness of the trajectories in all $H^s_+(\T)\,,$ $s\geq0\,.$ 
	\end{abstract}

	
	\maketitle
	\tableofcontents
	
	\section{Introduction}
	
	This paper aims to prove the global well--posedness 
	for the Calogero--Sutherland derivative nonlinear
	Schrödinger equations  on the torus \big($x\in\T := \R/(2\pi \Z)$\big ) :
	\be
	\label{CS}
	\tag{CS}
	\begin{cases}
		i\p_tu+\p_x^2u\pm 2D_+(\va{u}^2)u=0\,, \\
		u(t=0,x)\,=\,u_0\,, \hskip0.9cm x\in \T\,,
	\end{cases}
	\ee
	for small $L^2$-initial data $u_0\,$ in the focusing case (with sign $+$)\,, and for arbitrarily $L^2$--initial data in the defocusing case (with sign $-$).
	The operator  $D_+$ in the nonlinear term of \eqref{CS} denotes $D\Pi\,$, where
	$\;D=-i\p_x\,,$ and $\Pi$ is the Szeg\H{o} projector  acting on $L^2(\T)$  as
	\be\label{szego proj}
	\Pi\left(\sum_{n\in \Z}\fr{u}(n)\eee^{inx}\right):=\sum_{n\geq0 }\fr{u}(n)\eee^{inx}\,,
	\ee
	with value onto the Hardy space
	\be\label{L2+}
	L^2_+(\T):=\lracc{u\in L^2(\T)\,|\ \fr{u}(n)=0\,,\,\qlq n\in \Z_{\leq -1}}\equiv\, \Pi(L^2(\T))\,.
	\ee
	We equip $\Ltwo$ with the standard inner product of $L^2(\T)$\,, $\ps{u}{v}=\int_0^{2\pi}u \bar v \,\frac{dx}{2\pi }\,.$
	Our interest focuses on studying this equation with
	an unknown function 
	$u$ 
	taken
	in the Hardy space of the torus, with a certain regularity.
	Thus, we denote by $H^s_+(\T)\,,$ the subspace of the Sobolev space $H^s(\T)\,,$ defined as 
	\be\label{Hs+}
	H^s_+(\T):=H^s(\T)\cap L^2_+(\T)\,, \qquad s\geq 0\,,
	\ee
	and equipped with the Sobolev norm
	$$
	\|u\|_{H^s}=\|\langle D \rangle^{s}u\|_{L^2}\,,
	\qquad 
	\langle D \rangle^s= (1+|D|^{2})^{s/2}\,.
	$$
 
	In Physics, this dynamical \eqref{CS}--equation  is derived from the classical Calogero--Sutherland--Moser system (or Toda system) introduced in the end sixties--early seventies \cite{Ca69,Ca71,Su71,Su75}. This physical model corresponds to a $N$--body problem describing the pairwise interactions of $N$ identical particles.
	Abanov--Bettelheim--Wiegmann show in 
	\cite{Abanov09} 
	that taking the thermodynamic limit of such a model, and applying a change of variables leads to the 
	\eqref{CS}--equation. 
  In Mathematics, this equation has recently been studied on the real line ($x\in\R$) by \cite{GL22}\,, who referred to the equation as the Calogero--Moser derivative NLS equation. 
  The transition of nomenclature to the Calogero--Sutherland DNLS equation in the periodic setting ($x\in\T$) is connected to the physicist Sutherland, who has studied the Calogero--Sutherland--Moser system in the case where the $N$ particles lie on the circle and interact with an inverse sin-square potential (trigonometric--type potential). 
Besides, one can obtain the \eqref{CS}--equation formally as a limit of the \textit{intermediate nonlinear Schrödinger equation} introduced by Pelinovsky \cite{Pe95}\,,
	\be\label{INS}\tag{INS}
	i\p_tu = \p_x^2u + (i - T )\p_x(\lvert u\rvert^2)\,u\,,
	\ee
	where  
	$T$ is the integral operator
	$$
	Tu(t,x)=\frac{1}{2\delta} \,\text{p.v.}\int_{-\infty}^{+\infty}\coth\Big(\frac{\pi(x-y)}{2\delta}\Big) u(t,y)\, dy\,,
	$$
	by taking  $\delta\to \infty$\,.  The complex function $u$ in \eqref{INS} represents the envelope of the fluid, and $\delta$ denotes its total depth. By passing to the limit $\delta\to\infty\,,$ one obtains the same equation as \eqref{INS} but with the Hilbert transform 
	$$
	Hu(t,x)=\frac{1}{\pi}\,\text{p.v.}\int_{-\infty}^{+\infty}\frac{u(t,y)}{x-y}\, dy\,,
	$$
	instead of $T$ \cite{Pe95}.  And since the Szeg\H{o} projector $ \Pi_\T=\frac12(\Id+iH+\ps{\cdot}{1})\,,$ then the \eqref{CS}--equation can also be interpreted as a model describing  
	the interfacial 
 wave packets in a deep stratified fluid.

	\vskip0.25cm
	It turns out that the Calogero--Sutherland DNLS equation is completely integrable. 
	Thus, what does the word ``integrability'' mean? 
	In line with the different perspectives developed by various schools,
	a number of definitions have been raised.
	If the word ``integrable system'' means for some researchers the existence of {action--angle variables}, a coordinate system in which the equation is completely solvable by quadratures, others would say that it refers to the existence of a Lax operator associated with the equation, and satisfying the isospectral property\footnote{~See Remark~\ref{val propres conservées}.}. 
	However, a common facet of all these definitions is the presence of infinitely independent \textit{integrals of motion}, or what we can also call \textit{conservation laws}.
	Naturally, this infinite number of conservation laws plays a crucial role in proving some global well--posedness results. 
	\vskip0.25cm
	In our case, G\'erard--Lenzmann derived in \cite[Lemma 5.1]{GL22}\,,
	for $u$ sufficient regular,
	a Lax operator so that 
	the focusing Calogero--Sutherland DNLS equation \eqref{CS}$^+$ enjoys a Lax pair formulation on the real line $\R\,.$ i.e., for any $u\in H^s_+(\R)$ with $s$ sufficiently large, 
	there exist two operators $(L_u,B_u)$ such that the Lax equation 
	\be\label{Lax equation}
	\frac{dL_u}{dt}=[B_u\,,L_u]\,,\qquad\quad [B_u\,,L_u]:= B_uL_u -L_uB_u\,,
	\ee
	is satisfied with
	\be\label{Lax operators}
	\hskip1.4cm
	L_u=D-T_uT_{\bar{u}}\,,
	\quad \hskip0.9cm
	B_u=T_{u}T_{\partial_x\bar{u}}-T_{\partial_x{u}}T_{\bar{u}} +i(T_{u}T_{\bar{u}})^2\,.
	\ee
	The operator $T_u$ is the Toeplitz operator of symbol $u$\,, and is defined for any $u\in L^\infty$ by
	\be\label{Toeplitz operator}
	T_{u}f=\Pi(uf)\,, \qquad \qlq f\in L^2_+\,,
	\ee
	where $\Pi$ is the Szeg\H{o} projector given in \eqref{szego proj}\,.
	In what follows, we check that this Lax equation holds true on the torus $\T$ by retrieving the same Lax operators  $(L_u,B_u)$ as on the real line\,.
	And, as expected, through this Lax formalism, we derive infinite conservation laws $\ps{(L_u+\la)^s 1}{1}\,,$ $\la>\!>0\,,$ $s\geq 0 $\,, in order to control the growth of the Sobolev norms $\|u(t)\|_{\dot H^{s}}$ uniformly for all $t\in\R\,.$\footnote{~In particular, one can see that the usual conservation laws : the average $\ps{1}{u}$\,, and the $L^2$ norm $\|u\|_{L^2}$ are conserved for $s=1$ and $2$\,, since by definition of $L_u=D-T_uT_{\bar{u}}$ we have $L_u1=-\ps{1}{u}u$\,.} 
 
	\vskip0.25cm
	Observe, the Calogero--Sutherland DNLS equation is invariant under the scaling 
	\be\label{scaling invariance}
	u(t,x)\longmapsto \la^{1/2}u(\la^2t,\la x)\ ,\qquad \la\in\R\,,\; 
	(t,x)\in {I}\times\R\,.
	\ee
	This suggests the $L^2$--criticality of \eqref{CS} on $\R$ as well as on $\T$\,. In \cite[Theorem~2.1]{GL22}\,,  the local well--posedness of the \eqref{CS} equation was achieved in $H^s_+(\R)$ for $s>\frac12$\, by following the analysis of \cite{deMP10}. In particular, for $s>
	\frac32$\,, G\'erard--Lenzmann \cite[Proposition~2.1]{GL22} used iterative schemes of Kato's type and energy estimates to derive the local well--posedness in $H^{s}_+(\R)\,$ \cite{Sa79}\,.
	On $\T\,,$ the same proof of iterative schemes holds, and we deduce the local well--posedness in $H^s_+(\T)$ for $s>\frac32$\,.
	Therefore, we denote by $\mathcal S^+(t)$ the flow of the focusing Calogero--Sutherland DNLS equation \eqref{CS}$^+$  and by $\mathcal S^-(t)$ the flow of the defocusing equation \eqref{CS}$^-$ : for all $s>\frac32\;,$ $t\in\mrm{I}_{\max}$\,, 
	\be\label{flot}
	\begin{array}{cccc}
		\mathcal S^{\pm}(t)&:H^{s}_+(\T)&\longrightarrow& H^{s}_+(\T)\\
		&u_0&\longmapsto&u(t)
	\end{array}\,,
	\ee
	where $\mrm{I}_{\max}$ denotes the maximal interval of the existence of the solution.
	~\\
	
	\subsection{Main results}
	\hfill
 \\
 \textbf{Some notation.}
	In the sequel, we denote for any nonnegative integer $a\,,$  by $\N_{\geq a}$ the subset of $\Z$ given by $\lracc{k\in \Z \mid k\geq a}$\,. Moreover, we denote by $\mathcal{B}_{L^2_+}(r)$ the open ball of $\Ltwo$ centered at the origin, with radius $r>0$\,.
	~\\
	
	The goal of the paper is to prove the global well--posedness of the $L^2$--critical equation \eqref{CS} in all $H^s_+(\T)$\,, $s\geq 0\,.$
	As a starting point, we state the results for the more challenging equation, the focusing Calogero--Sutherland DNLS equation  
	\be\label{CS+}\tag{CS$^+$}
	i\p_tu+\p_x^2u+ 2D_+(\va{u}^2)u=0\,,
	\ee
	then, we present the results for the defocusing case~\footnote{~
		We refer to the introduction of Weinstein \cite{We15} for a mathematical and physical meaning of the terms focusing and defocusing for any dispersive equation.
	}
	\be\label{CS-defocusing}\tag{CS$^-$}
	i\p_tu+\p_x^2u-2D_+(\va{u}^2)u=0\,.
	\ee
	
	\begin{theorem}\label{GWP k geq 4}
		For all $s>\frac32\,,$ the Calogero--Sutherland DNLS focusing equation \eqref{CS+} is globally well--posed in 
		$H^s_+(\T)\cap \mathcal{B}_{L^2_+}(1)$\,.
		Moreover,  the following a priori bound holds, 
		\[
		\sup_{t\in\R}\|u(t)\|_{H^s}\;\leq C\;,
		\]
		where $C(\|u_0\|_{H^s})$ is a positive constant.
	\end{theorem}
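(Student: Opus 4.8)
\emph{Strategy: reduction to an a priori bound.} By the local theory recalled above, each $u_0\in H^s_+(\T)$ with $s>\tfrac32$ gives a unique maximal solution $u\in C(\mathrm{I}_{\max};H^s_+(\T))$ satisfying the blow-up alternative: if $T_+:=\sup\mathrm{I}_{\max}<\infty$ then $\|u(t)\|_{H^s}\to+\infty$ as $t\uparrow T_+$ (and similarly at the left endpoint). It therefore suffices to establish the a priori bound $\sup_{t\in\mathrm{I}_{\max}}\|u(t)\|_{H^s}\le C(\|u_0\|_{H^s})$; granting it, $\mathrm{I}_{\max}=\R$, and the bound together with the local theory yields global well-posedness — existence, uniqueness, and continuous dependence on every compact time interval, the uniform bound keeping the local existence time from shrinking to zero. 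The $L^2$ norm is conserved, a consequence of the gauge invariance $u\mapsto e^{i\theta}u$ of \eqref{CS+} and independent of integrability, so $\|u(t)\|_{L^2}=\|u_0\|_{L^2}=:\mu$ on $\mathrm{I}_{\max}$; the hypothesis $u_0\in\mathcal B_{L^2_+}(1)$ is exactly $\mu<1$, and this is where smallness is used.

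\emph{Positivity of the Lax operator along the flow.} Fix $\lambda\ge 1$. The structural key is that $L_{u(t)}+\lambda$ is a positive self-adjoint operator, bounded below by a fixed multiple of $D+1$, uniformly in $t$. Since $T_uT_{\bar u}=(T_{\bar u})^{*}T_{\bar u}\ge0$, writing $\widehat{T_{\bar u}f}(n)=\sum_{k\ge0}\overline{\widehat u(k)}\,\widehat f(k+n)$ and applying Cauchy--Schwarz frequency by frequency gives, for all $f\in L^2_+(\T)$,
\[\|T_{\bar u}f\|_{L^2}^{2}\ \le\ \|u\|_{L^2}^{2}\sum_{m\ge 0}(m+1)\,|\widehat f(m)|^{2}\ =\ \|u\|_{L^2}^{2}\,\langle (D+1)f\,|\,f\rangle\]
(each frequency $m$ of $f$ being counted $m+1$ times). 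Hence $L_u+1=(D+1)-T_uT_{\bar u}\ge(1-\|u\|_{L^2}^{2})(D+1)$ as forms, so along the solution $(1-\mu^2)(D+\lambda)\lesssim L_{u(t)}+\lambda\le D+\lambda$, with $1-\mu^2>0$; in particular $L_u+\lambda$ is self-adjoint (bounded perturbation of $D$) and positive, so its fractional powers $(L_u+\lambda)^{\sigma}$, $\sigma\ge0$, are defined by the spectral theorem. (In the defocusing case $L_u=D+T_uT_{\bar u}\ge D\ge0$ needs no smallness, which is why the defocusing analogue holds for arbitrary $L^2$ data.)

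\emph{Conservation laws and coercivity --- the main obstacle.} The Lax equation $\dot L_u=[B_u,L_u]$ holds on $\T$ with the same $(L_u,B_u)$ as in \eqref{Lax operators}, $B_u$ is skew-adjoint, and solutions of \eqref{CS+} satisfy a compatible companion law $\partial_tu=B_uu+\Phi(L_u)u$ with $\Phi$ a polynomial with real coefficients. Differentiating in $t$, using skew-adjointness of $B_u$ and that $\Phi(L_u)$ commutes with $(L_u+\lambda)^{k}$ inside the inner product, one finds that for every $k\in\N$
\[\mathcal E_k(u)\ :=\ \langle (L_u+\lambda)^{k}u\,|\,u\rangle\]
is conserved; these refine the quantities $\langle(L_u+\lambda)^{s}1\,|\,1\rangle$ of the introduction (expand the latter via $(L_u+\lambda)1=\lambda\cdot 1-\langle 1|u\rangle u$ and use conservation of $\widehat u(0)$ to see they are combinations of the $\mathcal E_j$). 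Decomposing $L_u+\lambda=(D+\lambda)-T_uT_{\bar u}$ and expanding $\mathcal E_k(u)$ produces a principal term $\langle(D+\lambda)^{k}u\,|\,u\rangle\ge\|u\|_{H^{k/2}}^{2}$ and a remainder of monomials each containing at least one factor $T_uT_{\bar u}=\Pi(u\,\Pi(|u|^2)\,\cdot\,)$. The crucial --- and, I expect, the hardest --- step is to bound this remainder by a polynomial in $\|u\|_{H^{\sigma}}$ for some $\sigma$ strictly below $k/2$, with constants depending only on $\mu$ and $\lambda$: in the focusing case the indefiniteness of $L_u$ rules out any cheap positivity argument, so one must genuinely exploit $\mu<1$ for the borderline pieces --- e.g.\ $\|\Pi(|u|^2)\|_{L^2}^{2}=\|T_{\bar u}u\|_{L^2}^{2}\le\mu^2(\|u\|_{\dot H^{1/2}}^{2}+\mu^2)$ from the estimate above --- and use multilinear/commutator estimates adapted to the Szeg\H{o} projector for the higher terms (together with the resolvent expansion of $(L_u+\lambda)^{\sigma}$ for non-integer $\sigma$). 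Granting this,
\[\|u\|_{H^{k/2}}^{2}\ \le\ C(\mu,\lambda)\Big(\mathcal E_k(u)+P_k\big(\|u\|_{H^{\sigma}}\big)\Big),\qquad\sigma<\tfrac{k}{2},\]
for an explicit polynomial $P_k$.

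\emph{Induction and conclusion.} We bootstrap in the regularity index, working with half-integer powers (or interpolating) so that at each stage the lower index $\sigma<k/2$ has already been controlled. The base case is $\mathcal E_0(u)=\mu^2$, the conserved $L^2$ bound; next, $\mathcal E_1(u)=\|u\|_{\dot H^{1/2}}^{2}+\lambda\mu^2-\|\Pi(|u|^2)\|_{L^2}^{2}\ge(1-\mu^2)\|u\|_{\dot H^{1/2}}^{2}+\lambda\mu^2-\mu^4$ already yields, since $\mathcal E_1$ and $\mu$ are conserved, a uniform bound on $\|u(t)\|_{H^{1/2}}$. Assuming uniform-in-$t$ control of $\|u(t)\|_{H^{\sigma}}$ for all $\sigma<k/2$, the displayed inequality with $\mathcal E_k(u(t))=\mathcal E_k(u_0)$ gives a uniform bound on $\|u(t)\|_{H^{k/2}}$ depending only on $\|u_0\|_{H^{k/2}}$ and $\mu$. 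Taking $k$ with $k/2\ge s$ and interpolating gives $\sup_{t\in\mathrm{I}_{\max}}\|u(t)\|_{H^s}\le C(\|u_0\|_{H^s})$. By the blow-up alternative $\mathrm{I}_{\max}=\R$, and the a priori bound together with the local well-posedness promotes the flow to a global one, which proves Theorem~\ref{GWP k geq 4}.
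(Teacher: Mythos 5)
Your strategy coincides with the paper's own: Lax pair, conservation of the quantities $\mathcal E_k(u)=\langle (L_u+\lambda)^k u\mid u\rangle$ (Lemma~\ref{loi de conservation}), the sharp estimate of Lemma~\ref{inegality Pi(u bar h)} giving coercivity of $L_u+\lambda$ under $\|u_0\|_{L^2}<1$, and a bootstrap in half-integer steps of regularity; your base cases ($\mathcal E_0$, $\mathcal E_1$, hence the uniform $H^{1/2}$ bound) are correct and agree with Steps 1--2 of Proposition~\ref{controle loi de conservation}. However, the step you label as crucial and then assume (``Granting this'') --- bounding the remainder in the expansion of $\mathcal E_k$ by lower Sobolev norms, equivalently the uniform-in-$t$ two-sided estimate $\tfrac1C\|f\|_{H^s}\le\|(L_{u(t)}+\lambda)^s f\|_{L^2}\le C\|f\|_{H^s}$ --- is precisely the technical core of the paper's proof (Proposition~\ref{controle loi de conservation}, established by induction on intervals of length $\tfrac12$ using the previously obtained uniform $H^{1/2}$ and $H^1$ bounds together with Sobolev embedding, Young and Cauchy--Schwarz). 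So the heart of the argument is missing from your proposal, even though your plan for it is plausible and in the same spirit.

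There is a second genuine gap at non-half-integer $s$: your conclusion ``take $k$ with $k/2\ge s$ and interpolate'' bounds $\|u(t)\|_{H^s}$ by interpolating the solution between $L^2$ and $H^{k/2}$, which yields a constant depending on $\|u_0\|_{H^{k/2}}$ --- a norm a general $u_0\in H^s_+(\T)$ need not even have finite --- rather than $C(\|u_0\|_{H^s})$ as claimed. Your differentiation argument only produces conservation for integer $k$; the paper instead conserves the fractional quantities $\langle(L_u+\lambda)^{2s}u\mid u\rangle$ for all real $s$ (via the unitary conjugation $U(t)^*L_{u(t)}U(t)=L_{u_0}$) and performs complex interpolation at the level of the operator inequality in Proposition~\ref{controle loi de conservation}, so that the final bound involves only $\|u_0\|_{H^s}$. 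To repair your argument you must do one of these two things, not interpolate the solution norms. Finally, a small but substantive slip: the companion law is $\partial_t u=B_uu-iL_u^2u$, i.e.\ $\Phi(x)=-ix^2$, so $\Phi(L_u)$ is skew-adjoint; with a real-coefficient polynomial, as you wrote, $\Phi(L_u)$ would be self-adjoint and the cancellation in your conservation computation for $\mathcal E_k$ would fail.
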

	
	\begin{Rq}
		The restriction of smallness on the $L^2$--norm of the initial data, 
		namely $\|u_0\|_{L^2}<~1$\,, 
		appears after applying a sharp inequality (Lemma~\ref{inegality Pi(u bar h)}) in order to control the growth of the Sobolev norms $\|u(t)\|_{\dot{H}^s}$\,, $\,s\geq 0\,,$ by the conservation laws. 
 This sharp inequality is an equality when we take for example 
  \[
  u_0(x)=\frac{\sqrt{1-\va{q}^2}}{1-q\eee^{ix}}\,, \quad q\in \D\,,
  \]
  which correspond to the profile of a \eqref{CS+}'s traveling wave of $L^2$--norm $\|u_0\|_{L^2}=~1$ \cite{Ba23}.
		More details for an eventual way to avoid this condition are presented in Section~\ref{Remarks}\,, but so far it is still an open problem.
	\end{Rq}
	
	\vskip0.25cm
	As a second step, we focus on the main point of this paper : how the flow $\mathcal{S}^+(t)$ defined globally on $H^2_+(\T)$ for $u_0\in \B{1}\,,$ can be extended to less regularity spaces for instance $L^2_+(\T)\,$?  Recall, as noted in \eqref{scaling invariance}\,, the Calogero--Sutherland DNLS equation is $L^2$--critical. 
	Based on the previous Theorem, and under the notation $u^\eps(t)=\mathcal S^+(t)u_0^\eps\;,$ $\eps>0\,,$ we state the following result.
	\begin{theorem}\label{extension du flot a L2}
		Let $u_0\in \B{1}\,.$
		There exists a unique potential $u\in\mathcal C(\R; L^2_+(\T))$ such that
 for any sequence $(u_0^\varepsilon)\subseteq H^2_+(\T)$ where
 $
    \|u_0^\eps-u_0\|_{L^2}\underset{\eps\to0}{\longrightarrow} 0\,, 
$
the following convergence holds : for all~$T>0\,,$
$$
    \sup_{t\in[-T,T]}\|u^\eps(t)-u(t)\|_{L^2}\to 0\,, \quad \eps\to 0\,.
$$
  Moreover, the $L^2$--norm of the limit potential $u$ is  conserved 
		\be\label{egalite norme L2 intro}
		\|u(t)\|_{L^2}=\|u_0\|_{L^2}\,,\quad \qlq t\in \R.
		\ee
	\end{theorem}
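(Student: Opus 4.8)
The plan is to combine the global $H^2_+(\T)$--dynamics provided by Theorem~\ref{GWP k geq 4} with the explicit representation formula for the flow $\mathcal S^+(t)$ coming from the Lax pair $(L_u,B_u)$, in order to turn $L^2$--closeness of the initial data into $L^2$--closeness of the corresponding solutions on compact time intervals, and then to conclude by density of $H^2_+(\T)$ in $L^2_+(\T)$. No direct argument at $L^2$--regularity is available: since \eqref{CS+} is $L^2$--critical, the iterative scheme and the energy estimates close only for $s>\frac32$, so reaching the critical space genuinely requires the integrable structure.

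The heart of the matter is a uniform $L^2$--stability bound on bounded time intervals. Concretely, for every $T>0$ and every $r\in(0,1)$ I would establish a nondecreasing modulus $\omega_{r,T}:[0,\infty)\to[0,\infty)$ with $\omega_{r,T}(0^+)=0$ for which
\[
\sup_{|t|\leq T}\big\|\mathcal S^+(t)u_0-\mathcal S^+(t)v_0\big\|_{L^2}\;\leq\;\omega_{r,T}\big(\|u_0-v_0\|_{L^2}\big)
\qquad\text{for all }u_0,v_0\in H^2_+(\T)\cap\B{r}\,.
\]
To prove it, one inserts $u_0$ and $v_0$ into the explicit formula and estimates the difference of the two solutions by the difference of the spectral objects attached to the Lax operators $L_{u_0}$ and $L_{v_0}$ that enter the formula (such as the resolvent vectors $(L_{u_0}+\la)^{-1}1$ and the explicit unitary-in-time evolution factor); the crucial point is that these objects depend continuously on the potential for the $L^2$--topology, uniformly for potentials in $\B{r}$ and for $t\in[-T,T]$. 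The basic building block is the continuity of $u\mapsto T_uT_{\bar u}$ in an appropriate operator topology, together with the conservation of the $L^2$--norm, which confines the solutions to $\B{r}$ and thus keeps every quantity in play in a fixed bounded regime as long as the $L^2$--norm stays below~$1$ (the threshold of Theorem~\ref{GWP k geq 4}, itself inherited from the sharp inequality of Lemma~\ref{inegality Pi(u bar h)}). This is where I expect essentially all the difficulty to concentrate: showing that the spectral data of $L_{u_0}$ feeding the explicit formula vary continuously with $u_0$ in the $L^2$--topology is a borderline statement, precisely because $L^2$ is the critical regularity, and one must also control the explicit time evolution uniformly on $[-T,T]$; the restriction $r<1$ is exactly what prevents the estimate from degenerating.

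Granting this, the rest is routine. Fix $u_0\in\B{1}$ and choose $r$ with $\|u_0\|_{L^2}<r<1$. For any sequence $(u_0^\eps)\subseteq H^2_+(\T)$ with $\|u_0^\eps-u_0\|_{L^2}\to0$ we have $u_0^\eps\in\B{r}$ for $\eps$ small, so by Theorem~\ref{GWP k geq 4} each $u^\eps=\mathcal S^+(\cdot)u_0^\eps$ is globally defined, and by the key estimate $(u^\eps)$ is a Cauchy sequence in $\mathcal C([-T,T];L^2_+(\T))$ for every $T>0$; let $u\in\mathcal C(\R;L^2_+(\T))$ be its limit. If $(v_0^\eps)$ is any other such sequence, interlacing it with $(u_0^\eps)$ produces a new sequence still converging to $u_0$ in $L^2$, to which the key estimate applies, so $v^\eps\to u$ as well; this shows at once that the limit is independent of the approximating sequence and that $u$ is the unique element of $\mathcal C(\R;L^2_+(\T))$ with the stated property (choosing $u_0^\eps\equiv u_0$ when $u_0\in H^2_+(\T)$ identifies the extension with the classical flow). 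Finally, for each $\eps$ the $L^2$--norm is conserved along the $H^2_+(\T)$--flow, $\|u^\eps(t)\|_{L^2}=\|u_0^\eps\|_{L^2}$ for all $t\in\R$ (one of the conservation laws furnished by the Lax pair, cf. the footnote just after \eqref{Lax operators}); letting $\eps\to0$, the left-hand side converges to $\|u(t)\|_{L^2}$ uniformly on $[-T,T]$ and the right-hand side to $\|u_0\|_{L^2}$, which is exactly \eqref{egalite norme L2 intro}.
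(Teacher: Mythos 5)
There is a genuine gap, and it sits exactly where you say you expect the difficulty to concentrate: the uniform $L^2$--stability bound with modulus $\omega_{r,T}$ is asserted, not proved, and the mechanism you sketch for it does not deliver it. Feeding $u_0$ and $v_0$ into the explicit formula and using continuity of the spectral data of $L_{u_0}$ with respect to the potential in the $L^2$--topology (which is what the paper actually establishes, via Lipschitz continuity of the eigenvalues and strong resolvent convergence, Propositions~\ref{Lipschitz} and~\ref{resolvent cv}) only controls the holomorphic extension $u(t,z)$ pointwise, or locally uniformly, for $z\in\D$. That is equivalent to coefficient--wise, i.e.\ weak, convergence in $\Ltwo$; it gives no control of the $L^2(\T)$ norm on the boundary circle, hence neither strong convergence nor any quantitative modulus. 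Note also that what you claim is strictly stronger than the theorem: a modulus valid for all pairs in $H^2_+\cap\B{r}$ is uniform continuity of the solution map on an $L^2$--bounded (non--compact) set, a delicate statement for an $L^2$--critical equation which the paper never proves — it only obtains sequential continuity at each datum by a compactness argument.

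The paper's route shows what is needed to bridge precisely this gap. It first characterizes the weak limit uniquely through the explicit formula (Proposition~\ref{caracterisation u}), and then upgrades weak to strong convergence by proving the norm identity $\|u(t)\|_{L^2}=\|u_0\|_{L^2}$ directly: it constructs the evolving eigenfunctions $f_n^{\,\eps,t}$ of $L_{u^\eps(t)}$, obtains the uniform bound \eqref{borne H12} on $\|f_n^{\,\eps,t}\|_{H^{1/2}}$ — this is where the smallness $\|u_0\|_{L^2}<1$ and the sharp Lemma~\ref{inegality Pi(u bar h)} are used — so that they converge strongly in $L^2$ to an orthonormal family $(f_n^{\,t})$, passes to the limit in $\ps{u^\eps(t)}{f_n^{\,\eps,t}}=\ps{u_0^\eps}{f_n^{\,\eps,0}}\eee^{-it\la_n^2}$, and concludes $\|u(t)\|_{L^2}\geq\|u_0\|_{L^2}$ by Bessel's inequality, the reverse inequality coming from weak lower semicontinuity. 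In other words, in the paper the conservation law \eqref{egalite norme L2 intro} is the engine that produces the strong convergence, whereas in your plan it is deduced afterwards from a uniform convergence whose proof is missing. Unless you can actually establish your stability estimate (which would be a stronger result than the theorem itself), the argument does not close; the compactness-plus-rigidity scheme of the paper, or something equivalent to its eigenfunction/Bessel step, has to be supplied.
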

	As a consequence, Theorem~\ref{extension du flot a L2} leads to the global well--posedness of the \eqref{CS+} problem in $L^2_+(\T)$ in the following sense : There exists a unique continuous extension of the flow defined on $H^2_+(\T)\,,$ to $\Ltwo\,,$ generating a unique continuous map  
	$$
	u_0\in\mathcal{B}_{L^2_+}(1)\longmapsto u\in\mathcal{C}(\R,\Ltwo)\,.
	$$
	The key ingredient of the proof is to obtain $H^\frac12$ bounds (inequality \eqref{borne to fn-uzeroeps-H1/2}) on the eigenfunctions of the Lax operator $L_{u^\eps}$\,, which also constitute an orthonormal basis of $L^2_+(\T)\,$. 
 Therefore, we deduce the strong convergence of these eigenfunctions in $L^2$\,. 
 Finally, using Parseval's identity, we infer~\eqref{egalite norme L2 intro}\,.
	\vskip0.25cm
	We also need to emphasize the important aspect of the uniqueness of the limit potential $u(t)\,,$ obtained independently of the choice of the sequence $(u_0^\eps)$ that approximates $u_0\in L^2_+(\T)\,$.
	For this purpose, we derive in Proposition~\ref{The inverse dynamical formula}\,, \textbf{an explicit formula of the solution} of the focusing \eqref{CS+} equation. Thus, for any initial
	data $u_0$\,,  the solution of the \eqref{CS+} focusing equation is given by 
	\be\label{explicit formula intro}
	u(t,z)=\ps{(\Id-z\eee^{-it}\eee^{-2itL_{u_0}}S^*)^{-1}\,u_0}{1}\,,\qquad z\in\D:=\lracc{\va{z}<1}\,,
	\ee
	where $S^*$ denotes the adjoint of the Shift operator $S:h\mapsto zh$ in $L^2_+(\T)\,,$  and $L_{u_0}$ is the Lax operator at $t=0\,.$
	We underline two important facts about \eqref{explicit formula intro} :
	\begin{enumerate}[I.]
		\item \label{I} First, this inversion dynamical formula defined inside the open unit disc consists an explicit solution for the nonlinear PDE \eqref{CS+}\,. This is not the first time that an explicit solution occurs while dealing with nonlinear integrable PDEs.
		Indeed, G\'erard--Grellier derived in \cite{GG15} an explicit solution for the Szeg\H{o} equation, and recently G\'erard also prove in \cite{Ger22} that the Benjamin--Ono equation has an explicit solution on $\R$ and on $\T\,.$ 
		The common point to all these dynamical explicit formulas is that they all rely closely
		on the structure of the Lax operators induced by these equations.
		\vskip0.1cm
		\item Beyond the fact that we have an explicit solution, this formula stresses out that the dynamics of the \eqref{CS+} equation
		are encoded by the Lax operator $L_{u_0}$\,, suggesting thus, that the so--called \textit{actions--angles variables}
		must be related to the spectral elements of the Lax operators $L_{u}\,$.
	\end{enumerate}
	
	\vskip0.25cm
	
	In view of  Theorem~\ref{extension du flot a L2}\,, we state the third result.
	\begin{corollary}
		\label{GWP k geq 0} For all $0\leq s\leq\frac32\,,$
		the Calogero--Sutherland DNLS focusing equation \eqref{CS+} is globally well--posed in $H^s_+(\T)\cap \mathcal{B}_{L^2_+}(1)$\,.   Moreover,  the following a--priori bound holds, 
		\[
		\sup_{t\in\R}\|u(t)\|_{H^s}\;\leq C\;,
		\]
		where $C=C(\|u_0\|_{H^s})>0$ is a positive constant.
	\end{corollary}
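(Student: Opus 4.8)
\emph{Proof proposal.} The plan is to combine the global $L^2$-flow of Theorem~\ref{extension du flot a L2} with the a~priori Sobolev bounds coming from the conservation laws $\ps{(L_u+\la)^{s}1}{1}$ --- which, by the sharp inequality of Lemma~\ref{inegality Pi(u bar h)} and the constraint $\|u_0\|_{L^2}<1$, control the homogeneous norms $\|u\|_{\dot H^{s}}$ --- through a density argument, the uniqueness of the limiting object being inherited from Theorem~\ref{extension du flot a L2} (equivalently, read off the explicit formula~\eqref{explicit formula intro}). The endpoint $s=0$ is exactly Theorem~\ref{extension du flot a L2}, the conservation \eqref{egalite norme L2 intro} being the a~priori bound there. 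So fix $0<s\le\tfrac32$ and $u_0\in H^s_+(\T)\cap\B1$, and let $u(t)=\mathcal S^+(t)u_0\in\mathcal C(\R,\Ltwo)$ be the solution supplied by Theorem~\ref{extension du flot a L2}.

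I would approximate $u_0$ by its Fourier truncations $\uzeroeps$ at frequency $1/\eps$, so that $\uzeroeps\in H^2_+(\T)$, $\uzeroeps\to u_0$ in $H^s$, and $\|\uzeroeps\|_{L^2}\le\|u_0\|_{L^2}<1$. By Theorem~\ref{GWP k geq 4} the solutions $\ueps(t)=\mathcal S^+(t)\uzeroeps$ are global in $H^2_+(\T)\cap\B1$, and the conservation law $\ps{(L_{\ueps(t)}+\la)^{s}1}{1}=\ps{(L_{\uzeroeps}+\la)^{s}1}{1}$ combined with Lemma~\ref{inegality Pi(u bar h)} and $\|\ueps(t)\|_{L^2}<1$ gives $\sup_{t\in\R}\|\ueps(t)\|_{H^s}\le C(\|\uzeroeps\|_{H^s})$, a bound uniform in $\eps$ since $\|\uzeroeps\|_{H^s}$ stays bounded. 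Theorem~\ref{extension du flot a L2} also gives $\ueps(t)\to u(t)$ in $L^2$, uniformly for $t\in[-T,T]$, for every $T>0$.

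Now the two facts combine in the familiar way. For each fixed $t$, the $L^2$-convergence together with the uniform $H^s$-bound force $\ueps(t)\rightharpoonup u(t)$ weakly in $H^s_+(\T)$; hence $u(t)\in H^s_+(\T)$ with $\|u(t)\|_{H^s}\le C(\|u_0\|_{H^s})$, by weak lower semicontinuity of the $\dot H^s$-norm and continuity of $v\mapsto\ps{(L_v+\la)^{s}1}{1}$ under $H^s$-convergence of $v$; this is the asserted a~priori bound. Running the same estimate backwards --- with initial datum $u(t)\in H^s_+(\T)\cap\B1$ (legitimate as $\|u(t)\|_{L^2}=\|u_0\|_{L^2}<1$) and time $-t$ --- produces the reverse inequality, hence the conservation along the $L^2$-flow of $\ps{(L_{u(t)}+\la)^{s}1}{1}$, and so of a quantity comparable over $\B1$ to $\|u(t)\|_{H^s}^2$. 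Strong continuity $u\in\mathcal C(\R,H^s_+(\T))$ and continuous dependence of the map $u_0\mapsto u$ from $H^s_+(\T)\cap\B1$ into $\mathcal C([-T,T],H^s_+(\T))$ then follow by upgrading the corresponding $L^2$-statements of Theorem~\ref{extension du flot a L2}: weak $H^s$-convergence (from boundedness plus $L^2$-convergence), together with the convergence of the conserved, $t$-independent, $H^s$-comparable quantities, promotes weak to strong convergence by the Radon--Riesz property of the Hilbert space $H^s_+(\T)$, uniformly in $t\in[-T,T]$ thanks to the equicontinuity that the uniform $H^s$-bound furnishes; alternatively one reads continuity directly off~\eqref{explicit formula intro}, via the strong operator continuity of $t\mapsto\eee^{-2itL_{u_0}}$ and the uniform-in-$t$ control of high frequencies encoded in the relative compactness of the orbit.

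The real difficulty is the content of the second paragraph: obtaining the a~priori bound in the range $s\le\tfrac32$, where neither the classical flow nor the Lax operator $L_u$ is directly available, so that the conservation laws must be handled on the smooth approximants $\ueps$ and passed to the limit. This requires knowing that $v\mapsto\ps{(L_v+\la)^{s}1}{1}$ is continuous on bounded subsets of $H^s_+(\T)$ and sharply comparable there to $\|v\|_{\dot H^{s}}^2$ --- precisely where Lemma~\ref{inegality Pi(u bar h)} and the smallness hypothesis $\|u_0\|_{L^2}<1$ are indispensable. The functional-analytic upgrades of the third paragraph are, by comparison, routine once these conserved quantities and the a~priori bound are in hand.
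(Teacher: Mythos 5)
Your architecture reproduces the paper's first half (smooth approximants, uniform $H^s$ bounds from conserved quantities plus Lemma~\ref{inegality Pi(u bar h)} and $\|u_0\|_{L^2}<1$, weak convergence, then upgrading to strong convergence via a conserved $H^s$--comparable quantity), but two steps fail as written. First, the quantities $\ps{(L_u+\la)^s1}{1}$ you invoke are \emph{not} comparable to $\|u\|_{\dot H^s}^2$: since $L_u1=-\ps{1}{u}\,u$, they see $u$ only through low--order data, and they equal $\la^s$ identically whenever $\fr{u}(0)=0$, so they cannot furnish the a priori bound. What the paper actually uses is the conserved family $\mathcal H_s(u)=\ps{(L_u+\la)^su}{u}$ of Lemma~\ref{loi de conservation}, combined with the two--sided estimate of Proposition~\ref{controle loi de conservation}, giving $\frac1C\|u^\eps(t)\|_{H^s}\le\|(L_{u^\eps(t)}+\la)^su^\eps(t)\|_{L^2}=\|(L_{u_0^\eps}+\la)^su_0^\eps\|_{L^2}\le C\|u_0^\eps\|_{H^s}$, with $C$ and the admissible $\la$ checked to be uniform in $\eps$ (this is where the smallness and the sharp inequality enter). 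Second, Radon--Riesz does not apply verbatim: the quantity whose convergence you obtain is only \emph{equivalent} to $\|\cdot\|_{H^s}^2$, and through an operator depending on the unknown, so weak $H^s$ convergence plus its convergence does not yield convergence of the $H^s$ norms. The paper instead expands $\|(L_{u^\eps(t^\eps)}+\la)^s(u^\eps(t^\eps)-u(t))\|_{L^2}^2$ and needs precisely the facts you flag but do not prove: $(L_{u^\eps(t^\eps)}+\la)^su(t)\to(L_{u(t)}+\la)^su(t)$ strongly and $(L_{u^\eps(t^\eps)}+\la)^su^\eps(t^\eps)\rightharpoonup(L_{u(t)}+\la)^su(t)$ weakly in $L^2_+(\T)$, which follow from the strong resolvent convergence of Proposition~\ref{resolvent cv} together with the functional--calculus Lemma~\ref{troncature}. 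That is the substantive content of this corollary, not a routine upgrade, so leaving it as an assumption is a genuine gap.

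Where you genuinely diverge from the paper is in proving $\|(L_{u(t)}+\la)^su(t)\|_{L^2}=\|(L_{u_0}+\la)^su_0\|_{L^2}$ for the limiting flow: the paper gets this from the orthonormal eigenbasis $(f_n^{\,t})$ of $L_{u(t)}$, the invariance of $\va{\ps{u(t)}{f_n^{\,t}}}$ and of the spectrum (Corollary~\ref{fn base + identite}, Corollary~\ref{ln[0]=ln[t]}), whereas you propose a time--reversal argument: prove the one--sided inequality at time $t$, then apply it again from the datum $u(t)\in H^s_+(\T)\cap\B{1}$ over time $-t$, using that the extended flow of Theorem~\ref{extension du flot a L2} has the group property by uniqueness and $L^2$--continuity. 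This is a legitimate and arguably more elementary alternative to the eigenbasis computation, and it is the one genuinely new idea in your proposal; but it only becomes a proof once the one--sided inequality itself is established, i.e.\ once the semicontinuity of $v\mapsto\|(L_v+\la)^sv\|_{L^2}$ along $L^2$--convergent, $H^s$--bounded sequences and its continuity at the approximating data are proved --- and those again rest on Proposition~\ref{resolvent cv} and Lemma~\ref{troncature}. In short: correct skeleton and one nice alternative step, but the conserved quantity is mis--identified and the operator--convergence lemmas carrying the real weight are assumed rather than proved.
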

	
	\begin{Rq}\label{Rq GWP}
		There is a subtlety hidden in the words of ``globally well--posed'' in the last statement. In fact, 
		it is important to distinguish here the two different aspects of global well--posedness. 
		First, we have the classical definition of GWP used in Theorem~\ref{GWP k geq 4}\,: for any $u_0\in H^s_+$ there exists a unique solution $u$  defined on $\R$ with value in $H^s_+$\,, such that $u$ depends continuously  on the initial data $u_0$ as a map $u_0\in H^s_+\mapsto u\in \mathcal{C}(\R,H^s_+)\,.$
		The second definition is the one described in Theorem~\ref{extension du flot a L2} in the sense : we suppose that the equation is defined at least in the distribution sense, then we extend the flow defined on high regularity spaces to low regularity spaces through continuous extension.
		\begin{figurehere}
			\begin{center}
				\begin{tikzpicture}
					\draw[->] (-0,0)--(13.25,0);
					
					\draw[red,-, line width=4] (-0,0)--(5,0);
					\draw[red] (0,0) node{\bf{I}} node[above=2.5] {$L^2_+$};
					
					\draw[red] (2.35,-0.3) node[below] {GWP in the sense of };
					\draw[red] (2.35,-0.7) node[below] { continuous extension};

					\draw[red] (5,0) node{{\bf I}} node[above=2]{$H^\frac32_+$};
					
					\draw[britishracinggreen,->, line width=4,](5,-0)--(13.25,-0);
					\draw[britishracinggreen] (9.5,-0.3) node[below] {GWP in the classical sense};
					
				\end{tikzpicture}
			\end{center}
		\end{figurehere}
		In this corollary, the global well--posedness is in the sense used in Theorem~\ref{extension du flot a L2}\,.
		This will become clearer once the proof is established (see Section~\ref{Proof of th 3 and 4}). We also expect that, following arguments in \cite{deMP10}\,, one can go down for the global well--posedness in the classical sense to $H^s_+(\T)$ with $s>\frac12\,.$
	\end{Rq}    
	
	\vskip0.25cm
	Beyond the global well--posedness results on the Cauchy Problem of \eqref{CS+}\,, we are interested in some qualitative properties about the flow $\mathcal S^+(t)$ of this equation.
	
	\begin{theorem}\label{Compact}
		Given an initial data $u_0\in \B{1}\cap H^s_+(\T)$\,, $s\geq 0\,,$ the orbit of the solution $\lracc{\mathcal{S}^+(t)u_0\,;\,t\in\R}$ is relatively compact in $H^s_+(\T)\,.$
	\end{theorem}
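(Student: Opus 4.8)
The plan is to show that $t\mapsto\mathcal S^{+}(t)u_0$ is a Bohr almost--periodic curve with values in $H^s_+(\T)$, so that its range is relatively compact. By Theorem~\ref{GWP k geq 4} and Corollary~\ref{GWP k geq 0} the orbit is already bounded in $H^s_+(\T)$; hence it is enough to prove the uniform frequency--tightness
\[
\lim_{N\to\infty}\ \sup_{t\in\R}\ \sum_{n>N}\langle n\rangle^{2s}\,\big|\widehat{u(t)}(n)\big|^{2}=0 ,
\]
boundedness together with this tightness being exactly relative compactness in $H^s_+(\T)$ (and, with a little more, almost--periodicity).

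First I would feed the explicit solution of Proposition~\ref{The inverse dynamical formula} into the problem. Writing it as $u(t,z)=\ps{(\Id-z\,\Theta(t)S^{*})^{-1}u_0}{1}$ with $\Theta(t):=\eee^{-it}\eee^{-2itL_{u_0}}=\eee^{-it(\Id+2L_{u_0})}$, the whole $t$--dependence is carried by the one--parameter unitary group $\Theta(t)$. Since $\Id+2L_{u_0}$ is self--adjoint with pure point spectrum — its eigenfunctions form an orthonormal basis of $L^2_+(\T)$, as used in the proof of Theorem~\ref{extension du flot a L2} — the family $\{\Theta(t):t\in\R\}$ is relatively compact in the unitary group of $L^2_+(\T)$ for the strong operator topology (SOT), with closure a compact abelian group $G$. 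Each coefficient $\widehat{u(t)}(n)=\eee^{-int}\ps{(\eee^{-2itL_{u_0}}S^{*})^{n}u_0}{1}$ depends SOT--continuously on $\Theta(t)\in G$ — a finite composition of contractions applied to the fixed vectors $u_0$ and $1$ — hence is Bohr almost--periodic in $t$, and at the level of formal Fourier series the orbit lies in $\{u_g:g\in G\}$, where $\widehat{u_g}(n):=\ps{(gS^{*})^{n}u_0}{1}$. Moreover $\sup_{g\in G}\|u_g\|_{H^s}\le C$: for fixed $N$, $g\mapsto\sum_{n\le N}\langle n\rangle^{2s}|\widehat{u_g}(n)|^{2}$ is SOT--continuous and $\le C^{2}$ on the dense set $\{\Theta(t)\}$, hence on all of $G$, and one lets $N\to\infty$. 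By Rellich on $\T$ this already yields continuity of $g\mapsto u_g$ into $H^{s'}_+(\T)$ for every $s'<s$, hence relative compactness of the orbit in every $H^{s'}_+(\T)$, $s'<s$ — in particular in $L^2_+(\T)$.

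The crux is to reach the endpoint $s'=s$, i.e. the displayed tightness, and for this I would use the full Lax structure, not merely the scalar conservation laws. Let $(f_n(t))_{n\ge0}$ be the orthonormal eigenbasis of $L_{u(t)}$, $L_{u(t)}f_n(t)=\lambda_n f_n(t)$ with the $\lambda_n$ conserved by isospectrality, and write $u(t)=\sum_n c_n(t)f_n(t)$. The explicit solution, whose time dependence is precisely the spectral rotation $\eee^{-2itL_{u_0}}$, shows that the moduli $\kappa_n:=|c_n(t)|$ — the action variables — are conserved; hence for every $\sigma\ge0$
\[
\big\|(L_{u(t)}+\lambda)^{\sigma}u(t)\big\|_{L^2}^{2}=\sum_{n}\kappa_n^{2}(\lambda_n+\lambda)^{2\sigma}
\]
is $t$--independent, and for $\sigma=s$ it equals $\|(L_{u_0}+\lambda)^{s}u_0\|_{L^2}^{2}<\infty$ since $u_0\in H^s_+(\T)$. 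I would then split, for a large parameter $M$, $u(t)=u^{\flat}_M(t)+u^{\sharp}_M(t)$ along $\{\lambda_n\le M\}$ and $\{\lambda_n>M\}$. Because $L_{u(t)}=D-T_{u(t)}T_{\overline{u(t)}}$ is, uniformly in $t$, a bounded perturbation of $D$ (by the a~priori bound; for $s\le\frac12$ one works with the regularized flow of Theorem~\ref{extension du flot a L2} and the uniform $H^{1/2}$--bounds on eigenfunctions established there), one has $\|h\|_{H^s}\lesssim\|(L_{u(t)}+\lambda)^{s}h\|_{L^2}+\|h\|_{L^2}$ uniformly in $t$, so the whole $H^s$--norm of $u^{\sharp}_M(t)$ is bounded by $\big(\sum_{\lambda_n>M}\kappa_n^{2}(\lambda_n+\lambda)^{2s}\big)^{1/2}+\big(\sum_{\lambda_n>M}\kappa_n^{2}\big)^{1/2}$ — a $t$--independent tail of a convergent series, hence $\to0$ as $M\to\infty$ uniformly in $t$. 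For the finite sum $u^{\flat}_M(t)$, it is enough that, for each fixed $n$, $\{f_n(t):t\in\R\}$ be relatively compact in $H^s_+(\T)$, so that $\sup_t\sum_{m>N}\langle m\rangle^{2s}|\widehat{f_n(t)}(m)|^{2}\to0$; this follows from the eigenvalue equation $Df_n(t)=\lambda_n f_n(t)+T_{u(t)}T_{\overline{u(t)}}f_n(t)$ by elliptic bootstrap — Sobolev products on $\T$ and the a~priori bound place $\{f_n(t):t\in\R\}$ in a bounded subset of a space compactly embedded into $H^s_+(\T)$ — together with Rellich. Choosing first $M$, then $N$, gives the tightness; therefore $g\mapsto u_g$ is continuous from the compact $G$ into $H^s_+(\T)$, so $\{u_g:g\in G\}$ is compact in $H^s_+(\T)$ and contains the orbit, and $t\mapsto\mathcal S^{+}(t)u_0=u_{\Theta(t)}$ is in fact almost--periodic in $H^s_+(\T)$. (For $0\le s\le\frac32$ the argument is run for the flow extended in Theorem~\ref{extension du flot a L2}, Proposition~\ref{The inverse dynamical formula} and Corollary~\ref{GWP k geq 0} remaining valid; the degenerate case $\widehat{u_0}(0)=0$, if not covered directly, is reached by approximating $u_0$ by $u_0+\varepsilon$ and using continuity of the flow.)

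I expect the tightness step to be the main obstacle. A soft argument is impossible: relative compactness in $L^2_+(\T)$ together with boundedness in $H^s_+(\T)$ does \emph{not} imply relative compactness in $H^s_+(\T)$ — for instance $\{\,f+\varepsilon_j\eee^{ijx}\,\}_{j\ge1}$ with $\varepsilon_j\to0$ and $\varepsilon_j j^{s}=1$ is a counterexample — so one genuinely has to exploit that the $H^s$--energy coincides, up to a conserved principal part $\sum_n\kappa_n^{2}(\lambda_n+\lambda)^{2s}$, with a quantity pinned down by conserved spectral data, and that the low--lying eigenfunctions of $L_{u(t)}$ do not spread out in frequency along the flow. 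Verifying (a) the conservation of the individual action variables $\kappa_n$ from the explicit solution, and (b) the $t$--uniform elliptic regularity of the $f_n(t)$ — with the bootstrap carried through in the low--regularity range $0\le s\le\frac32$ via the $L^2_+$--flow — is the technical heart of the proof.
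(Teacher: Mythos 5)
Your strategy is correct in outline and takes a genuinely different route from the paper. The paper proves sequential compactness directly: for $t_n\to\infty$ it extracts a weak limit $\tilde u$, upgrades to strong $L^2$ convergence by expanding $u(t_n)$ in the evolving eigenbasis $(f_m^{\,t_n})$ of $L_{u(t_n)}$, using the conserved moduli $\va{\ps{u(t_n)}{f_m^{\,t_n}}}=\va{\ps{u_0}{f_m^{\,0}}}$ (Corollary~\ref{fn base + identite}), the uniform $H^{\frac12}$ bounds on eigenfunctions (where $\|u_0\|_{L^2}<1$ enters through Lemma~\ref{inegality Pi(u bar h)}), Rellich--Kondrachov, a Cantor diagonal extraction of the phases $\eee^{-it_n\la_m^2}$, and Bessel; for $s>0$ it then upgrades weak $H^s$ convergence to strong by comparing the conserved quantity $\|(L_{u(t_n)}+\la)^su(t_n)\|_{L^2}$ with its limit via strong resolvent convergence (Proposition~\ref{resolvent cv}) and the functional-calculus Lemma~\ref{troncature}. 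You instead establish uniform-in-time frequency tightness by splitting $u(t)$ along the spectrum of $L_{u(t)}$ at height $M$: the high part is controlled, through the uniform equivalence of Proposition~\ref{controle loi de conservation}, by the conserved tail $\sum_{\la_n>M}\kappa_n^2(\la_n+\la)^{2s}$, while the low part is a finite combination of eigenfunctions shown precompact in $H^s_+(\T)$ by elliptic bootstrap and Rellich; this gives total boundedness directly (plus almost-periodicity as a bonus) and dispenses with the resolvent-convergence/functional-calculus machinery. Both arguments rest on the same core inputs: conserved spectrum and action moduli, and the $H^{\frac12}$ eigenfunction bounds under the smallness hypothesis. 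Two cautions. First, the conservation of the individual $\kappa_n$ does not fall out of the explicit formula alone: it is Corollary~\ref{fn base + identite}, whose proof for $L^2$ data requires constructing the limiting eigenbasis and the strong $L^2$ convergence of the approximate eigenfunctions (inequality~\eqref{borne H12}, where $\|u_0\|_{L^2}<1$ is used again); you should invoke that corollary or reproduce that work, as it is the real content behind your step (a). Second, your bootstrap for the precompactness of $\lracc{f_n(t)\,;\,t\in\R}$ in $H^s_+(\T)$ must be run in the weak/form sense in the range $0\leq s\leq\frac32$ and requires product estimates at each regularity level -- doable but not free; note that the paper never needs more than the $H^{\frac12}$ bound, since its weak-to-strong upgrade acts on $\|(L_{u}+\la)^su\|_{L^2}$ rather than on the eigenfunctions themselves. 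Finally, your soft step (compactness in $H^{s'}$, $s'<s$) says nothing at $s=0$, but your endpoint argument covers that case, and the closing remark about $\fr{u}_0(0)=0$ is superfluous, as nothing in the argument uses it.
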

	\vskip0.2cm
	\noindent
	\textbf{The defocusing equation \eqref{CS-defocusing}.} Moving now to the defocusing case  of the Calogero--Sutherland DNLS equation, this latter equation  enjoys also a Lax pair structure : for any $u(t)\in H^s_+\;,$ with $s$ large enough, there exist two operators 
	$$
	\tilde{L}_u=D \,\textcolor{red}{+}\, T_uT_{\overline u}\,, \qquad\tilde{B}_u=\,\textcolor{red}{-}T_uT_{\partial_x\overline u}\,\textcolor{red}{+}\,T_{\partial_xu}T_{\overline u} +i(T_uT_{\overline u})^2\,,
	$$
	satisfying  the Lax equation 
	$$
	\frac{d\tilde{L}_u}{dt}=[\tilde{B}_u,\tilde{L}_u]\,.
	$$ 
	Therefore, using the same methods as on the focusing case, we prove that the conservation laws $\langle(\tilde{L}_u+\lambda)^s1\mid 1\rangle$\,, $s\geq 0\,,$ $\lambda>0\,,$ controls uniformly the growth of the Sobolev norms 
	without requiring any additional condition on the initial data.
	As a consequence, we obtain similar results in the defocusing case as in the focusing case, \textit{regardless of how large the initial data is in $L^2$\,.} 
	To summarize, we have the following.
	
	\begin{theorem}\label{th defocusing case}
		The Calogero--Sutherland DNLS defocusing equation \eqref{CS-defocusing} is globally well--posed in
		$H^s_+(\T)$ for any $s\geq 0$ in the sense of Remark~\ref{Rq GWP}\,. 
		In addition,
		for all $u_0\in H^s_+(\T)\,,$
		\[
		u(t,z)=\ps{(\Id-z\eee^{-it}\eee^{-2it\tilde{L}_{u_0}}S^*)^{-1}\,u_0}{1}\,, 
		\]
		is the solution to the \eqref{CS-defocusing}--defocusing equation.
		Furthermore, the trajectories 
		\[
		\lracc{\mathcal{S}^-(t)u_0\,;\,t\in\R}
		\]
		are relatively compact in $H^s_+(\T)\,.$
	\end{theorem}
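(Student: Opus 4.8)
The plan is to transpose the entire focusing-case analysis to the defocusing sign, the gain being that the relevant Lax operator $\tilde L_u=D+T_uT_{\overline u}$ is now automatically nonnegative, so the smallness restriction $\|u_0\|_{L^2}<1$ disappears. First I would verify the Lax pair identity $\frac{d}{dt}\tilde L_{u(t)}=[\tilde B_u,\tilde L_u]$ on $\T$ for $u\in\mathcal C([0,T];H^s_+(\T))$ with $s$ large solving \eqref{CS-defocusing}; this is the same Toeplitz-operator computation as for \eqref{Lax operators}, keeping track of the signs displayed in the statement. Since $T_{\overline u}=T_u^*$, one reads off $\tilde B_u^*=-\tilde B_u$ and $\tilde L_u^*=\tilde L_u$, so the propagator $U(t)$ defined by $\partial_tU=\tilde B_{u(t)}U$, $U(0)=\Id$, is a unitary family on $L^2_+(\T)$ (well defined because $t\mapsto\tilde B_{u(t)}$ is continuous with values in bounded operators in this regularity range), and $\tilde L_{u(t)}=U(t)\tilde L_{u_0}U(t)^*$; in particular the spectrum of $\tilde L_{u(t)}$ is independent of $t$. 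The crucial structural point is $\tilde L_u=D+T_uT_u^*\geq D\geq 0$ on $L^2_+(\T)$, a nonnegative self-adjoint operator with compact resolvent.

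Next I would run the conservation-law machinery exactly as in the focusing case: the quantities $\langle(\tilde L_u+\lambda)^s1\mid 1\rangle$, $\lambda>0$, $s\geq 0$, are conserved along the flow (relying on $\tilde L_u1=\ps{1}{u}u$ and the Lax structure). Because $\tilde L_u\geq0$, no smallness is needed either to invert $(\tilde L_u+\lambda)^s$ or to run the interpolation that estimates $\|u(t)\|_{\dot H^s}$ by these conserved quantities: the sharp inequality Lemma~\ref{inegality Pi(u bar h)}, which in \eqref{CS+} forced $\|u_0\|_{L^2}<1$, is here replaced by the trivial bound $T_uT_u^*\geq 0$. This yields $\sup_{t\in\R}\|u(t)\|_{H^s}\leq C(\|u_0\|_{H^s})$ for every $u_0\in H^s_+(\T)$, $s>\frac32$, and hence, together with the local theory already available on $\T$ for $s>\frac32$, global well-posedness of \eqref{CS-defocusing} in $H^s_+(\T)$ in the classical sense — verbatim as in Theorem~\ref{GWP k geq 4}, but now for arbitrarily large data.

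For $0\leq s\leq\frac32$ I would reproduce the continuous-extension argument of Theorem~\ref{extension du flot a L2} and Corollary~\ref{GWP k geq 0}: approximate $u_0\in H^s_+(\T)$ by $u_0^\eps\in H^2_+(\T)$ in $H^s$, use the $H^{1/2}$ bounds on the eigenfunctions of $\tilde L_{u^\eps}$ (the analogue of \eqref{borne to fn-uzeroeps-H1/2}, again simpler here since $\tilde L_{u^\eps}\geq0$) to show that $(u^\eps)$ is Cauchy in $\mathcal C([-T,T];H^s_+(\T))$, and pass the conservation laws to the limit. In parallel I would derive the explicit formula following Proposition~\ref{The inverse dynamical formula}: from the commutation relations between $\tilde L_u$, the shift $S$ and $\tilde B_u$ one obtains a linear ODE in $z$ for the generating function $\ps{(\Id-zS^*)^{-1}u(t)}{1}$, whose integration gives
\[
u(t,z)=\ps{(\Id-z\eee^{-it}\eee^{-2it\tilde L_{u_0}}S^*)^{-1}u_0}{1}\,,\qquad z\in\D\,,
\]
first for regular $u_0$ and then for all $u_0\in H^s_+(\T)$ by density and continuity; since the right-hand side does not see the approximating sequence, this simultaneously establishes uniqueness of the limit potential, hence global well-posedness in the sense of Remark~\ref{Rq GWP} in all $H^s_+(\T)$, $0\leq s\leq\frac32$.

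Finally, relative compactness of $\lracc{\mathcal S^-(t)u_0\,;\,t\in\R}$ in $H^s_+(\T)$ follows as in Theorem~\ref{Compact}: the orbit is bounded in $H^s_+(\T)$ by the previous steps, and the explicit formula realizes it as the image of $u_0$ under the unitary group $\eee^{-it}\eee^{-2it\tilde L_{u_0}}$ through a map whose $H^s$-modulus of continuity is controlled uniformly by the conservation-law bounds; since $\tilde L_{u_0}$ has discrete spectrum tending to $+\infty$, the strong-operator closure of $\lracc{\eee^{-it}\eee^{-2it\tilde L_{u_0}}\,;\,t\in\R}$ is a compact abelian group, whence the closure of the orbit is compact in $H^s_+(\T)$. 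I expect the main difficulty to lie not in any single step but in checking that the sign flips leave intact the positivity and algebraic identities the focusing argument rests on — once this is granted the defocusing case is actually easier, there being no smallness threshold — together with the two genuinely technical points carried over from the focusing analysis: justifying the limit in the explicit formula, and the uniform-in-$t$ control of high-frequency tails needed for the relative compactness statement.
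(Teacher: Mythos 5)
Your proposal matches the paper's treatment: Section~\ref{defocusing} proves Theorem~\ref{th defocusing case} precisely by transposing the focusing arguments (Lax pair, conservation laws, explicit formula, continuous extension of the flow, compactness of orbits), the only new point being that $\tilde L_u=D+T_uT_{\bar u}\geq D\geq 0$ lets the conserved quantities control $\|u(t)\|_{H^s}$ without any smallness hypothesis — exactly the observation you lead with. Two small inaccuracies in your sketch do not change this assessment: the explicit formula is derived not from an ODE in $z$ but by conjugating with the propagator $U(t)$ and solving ODEs in $t$ for $U(t)^*1$, $U(t)^*u(t)$ and $U(t)^*S^*U(t)$ (as in Proposition~\ref{The inverse dynamical formula}), and your compact-abelian-group heuristic for relative compactness still needs the weak-to-strong $H^s$ upgrade that the paper (and your own fallback reference to Theorem~\ref{Compact}) obtains from the conservation laws, the eigenbasis/Parseval identity, and Lemma~\ref{troncature}.
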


	\subsection{Other related equations}
	As explained in \cite{GL22}, the Calogero--Sutherland DNLS equation \eqref{CS} can be seen as mass critical version
	of the Benjamin–Ono equation. We refer to \cite{GK21,GKT20} for a deep study of this latter equation on the torus. Of course,  the Calogero--Sutherland DNLS equation \eqref{CS} is also considered as part of the nonlinear Schrödinger's family.
	Several authors have been interested in different types of NLS--equations over the years. Some of these equations are classified and presented in \cite{Bo99}\,. Maybe the most closely related to the \eqref{CS}--equation are :
	\begin{enumerate}[(i)]
		\item \textbf{Cubic NLS equation.}
		\be
		\label{NLS-Cubic}
		\tag{NLS-cubic}
		i\p_tu+\p_x^2u \pm \va{u}^2u=0\,,
		\ee
		which is considered as one of the simplest PDE enjoying complete integrable properties.  Zakharov--Shabat have studied this equation in \cite{ZS72} using inverse scattering method. 
		Moreover,  global well--posedness results in $L^2(\T)$ are presented in Bourgain \cite{Bo93} after he introduced the $X^{s,b}$--spaces. 
		His proof relies on establishing $L^4(\T)$--Strichartz estimates and using $L^2$--conservation norm. Actually,
		this result of $L^2$--well--posedness is known to be sharp, and it is illustrated by various types of ill--posedness results below the regularity $L^2(\T)$\,.
		Indeed, Burq--G\'erard--Tzvetkov  proved in \cite{BGT02} that the flow map of \eqref{NLS-Cubic} fails to be uniformly continuous for Sobolev regularity below $L^2\,.$ Christ--Colliander--Tao \cite{CCT03} and Molinet \cite{Mo09} showed the discontinuity of the map solution in $H^s(\T)$ for $s<0\,.$
		
		For a deep study of \eqref{NLS-Cubic} using integrable tools, Birkhoff normal form, and some applications, we refer to Kappeler--Lohrmann--Topalov--Zung \cite{KLTZ17}\,, Gr\'ebert--Kappeler \cite{GK14} and Kappeler--Schaad--Topalov \cite{KST17}.
		For a study on the line $\R$, we cite \cite{HKV20}. More references are also provided in \cite{OS12}\,.
		\item \textbf{DNLS equation.}
		\be
		\label{DNLS}
		\tag{DNLS}
		i\p_tu+\p_x^2u +\pm i \p_x\left(\va{u}^2u\right)=0\,,
		\ee
		which is also an integrable equation enjoying infinite conservation laws \cite{KN78}. Using the I-method,  Win proved in \cite{Wi10} the global well--posed of \eqref{DNLS}--equation in $H^s(\T)$\,, $s>\frac12$ for small data in $L^2(\T)\,.$ More recently, Klaus--Schippa \cite{KS22} presented law regularity a priori estimates of $\|u\|_{H^s}$ for $0<s<\frac12$ upon small $L^2$--norm, where $u\in\mathcal{C}^\infty(\R,\mathcal{S}(\T))$ 
		and $\mathcal{S}(\T)$ denotes the Schwartz space. Actually, they proved the a priori estimates
		$$
		\sup_{t\in\R}\|u(t)\|_{B^s_{r,2}}\lesssim \|u(0)\|_{B^s_{r,2}}
		$$ in any Besov space $B^s_{r,2}\;,$ with $r\in[1, \infty]$ and $0<s<\frac12$\,. For a study on the line $\R$, we cite \cite{JLPS20,BP22, BLP21,KNV21,HKV21,HKNV22}\,.
	\end{enumerate}
	
	\subsection{Outline of the paper} 
	The paper is organized as follows. 
	\\
	In Section~\ref{The Lax pair formalism}\,, we discuss some properties about the Lax operators of the  Calogero--Sutherland DNLS focusing equation \eqref{CS+}\,. We derive the explicit formula of the solution of \eqref{CS+} in the first subsection~\ref{subsect: explicit formula}\,. Then, we prove in the second subsection~\ref{GWP-in-H2-if-u0<1}\,, the global well--posedness of the \eqref{CS+} problem in $H^{s}_+(\T)$ for any $s>\frac32\;$. 
	
	\noindent
	In {Section~\ref{section: Extension du flot}}\,, 
	we extend the flow $\mathcal S^+(t)$ of \eqref{CS+} continuously from $\Htwo$ to $\Ltwo\equiv H^0_+(\T)$\,. To this end, we use an approximation method, and we characterize in the {first subsection}~\ref{uniqueness of the limit} the limit potential $u(t)$ for all $t\in\R$\,.
	Then, in the second subsection~\ref{Conservation of L2 mass}, we make sure that the lack of compactness in $L^2_+(\T)$ do not occur while passing to the limit from $H^2_+(\T)$ to $\Ltwo\,.$
	In the same subsection, we derive an orthonormal basis of $\Ltwo$ where the coordinates of the solution $u(t)$ have nice evolution in this basis. This evolution suggests that the so--called ``Birkhoff coordinates'' are the coordinates of $u(t)$ in this basis.
	
	\noindent
	After that,
	we deal in {Section}~\ref{Proof of th 3 and 4} with the problem of global well--posedness of \eqref{CS+} in $H^{s}_+(\T)$ for $0<s\leq \frac32\,$. Moreover, we address the property of relative compactness of the orbits of \eqref{CS+} in $H^{s}_+(\T)$\,, $s\geq 0$\,.
	
	\noindent
	Moving to Section~\ref{defocusing}\,, we present the Lax pair for the defocusing Calogero--Sutherland DNLS equation \eqref{CS-} and we state the analogous results of \eqref{CS+} in the case of \eqref{CS-}\,.
	
	\noindent
	Finally, in Section~\ref{Remarks}\,, we discuss some remarks and open problems related to this equation.

	\section*{Acknowledges.} The author would like to thank warmly her Ph.D. advisor Patrick G\'erard for his rich discussions and comments on this paper. Additionally, she expresses her appreciation to the anonymous referee for the thorough review.

	\vskip0.5cm
	\section{The Lax pair structure}\label{The Lax pair formalism}
	\vskip0.2cm

	As noted in the introduction, we first check that the Lax pair defined in \eqref{Lax operators} holds the same in the context of the torus $\T$ as on the real line $\R$, even though on the real line $\R\,,$ a complex function $f$ is decomposed as 
	$$
	f=\Pi f+\overline{\Pi \overline{f} }\,, \qquad \fr{\Pi f}(\xi)=\mathds{1}_{\xi>0}\fr{f}(\xi)\,,  \quad \xi\in\R\,,
	$$
	while on the torus $\T\,,$
	$$
	f=\Pi f+\overline{\Pi \overline{f}}-\ps{f}{1}\,,\qquad     \Pi\left(\sum_{n\in \Z}\fr{f}(n)\eee^{inx}\right):=\sum_{n\in\Nzero }\fr{f}(n)\eee^{inx}\,.
	$$

	\begin{prop}[The Lax pair] \label{The lax pair prop}
		For any $s>\frac32$\,, let $u\in \mathcal C([-T,T],H^s_+(\T))$ be a solution of  the focusing equation \eqref{CS+}\,.
		Then, there exist two operators 
		$$
		L_u=D \,-\, T_uT_{\overline u}\,, \qquad B_u=\,T_uT_{\partial_x\overline u}\,-\,T_{\partial_xu}T_{\overline u} +i(T_uT_{\overline u})^2
		$$
		satisfying  the Lax equation 
		$$
		\frac{dL_u}{dt}=[B_u,L_u]\,,
		$$ 
		where $T_u$ is the Toeplitz operator defined in \eqref{Toeplitz operator}\,.
	\end{prop}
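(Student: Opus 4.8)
The strategy is the classical "direct verification" of a Lax pair: one wants to show that, for a solution $u$ of \eqref{CS+}, the operator-valued function $t\mapsto L_{u(t)}$ satisfies $\frac{dL_u}{dt}=[B_u,L_u]$. Since $L_u=D-T_uT_{\bar u}$ and $D$ is a fixed (time-independent) operator, the left-hand side is just $-\frac{d}{dt}(T_uT_{\bar u})=-T_{\p_t u}T_{\bar u}-T_uT_{\p_t\bar u}$, and one substitutes $\p_t u=i\p_x^2u+2iD_+(|u|^2)u$ (and its conjugate) from the equation. The right-hand side $[B_u,L_u]$ is then expanded into commutators of Toeplitz operators with $D$ and with each other. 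The whole proof reduces to an algebraic identity among Toeplitz operators on $H^2_+(\T)$, so the first thing I would set up are the commutation relations for Toeplitz operators on the Hardy space of the torus.

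**Key Toeplitz identities.** I would first record: (i) $[D,T_b]=T_{\p_x b}$ when $b$ is holomorphic, but more generally for $b=f$ arbitrary one has $[D,T_f]=T_{-i\p_x f}$ on $H^2_+$ since $D$ differentiates Fourier modes and $T_f$ is a Fourier-multiplier-sandwiched-by-$\Pi$; (ii) the key nonlinear relation $T_fT_g-T_{fg}=-(\Pi(f\,\cdot))^*(\text{something})$, more precisely the Hankel-type correction $T_fT_{\bar g}=T_{f\bar g}-H_{\bar f}H_g$ where $H_f h=\Pi(f\bar h)$ is the Hankel operator — this is where the torus differs slightly from $\R$ because $\Pi$ on $\T$ keeps the zero mode, so I must be careful about the constant term $\ps{\cdot}{1}$. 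I expect the relevant identity to be that $T_u T_{\bar u}$ differs from multiplication by $|u|^2$ (followed by $\Pi$) by a rank-controlled Hankel term, and that $D_+(|u|^2)=D\Pi(|u|^2)$ appears precisely when one commutes $D$ past $T_uT_{\bar u}$. The plan is to compute $[D,T_uT_{\bar u}]=T_{-i\p_x u}T_{\bar u}+T_uT_{-i\p_x\bar u}$ and compare with the terms $T_uT_{\p_x\bar u}-T_{\p_x u}T_{\bar u}$ appearing in $B_u$.

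**Assembling the identity.** Writing $P:=T_uT_{\bar u}$, we have $L_u=D-P$ and $B_u=-i[D,P]_{\text{"antisym part"}}+iP^2$; more precisely $B_u=T_uT_{\p_x\bar u}-T_{\p_xu}T_{\bar u}+iP^2$. Then
\[
[B_u,L_u]=[B_u,D]-[B_u,P]=-[B_u,D]+[P,B_u],
\]
and I expand $[B_u,D]$ using the first-order commutators (which produces second-derivative Toeplitz terms matching $-\p_x^2$ contributions) and $[P,B_u]=[P,T_uT_{\p_x\bar u}]-[P,T_{\p_xu}T_{\bar u}]+i[P,P^2]$, the last bracket vanishing. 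On the other side, $\frac{dL_u}{dt}=-\frac{dP}{dt}=-i(T_{\p_x^2u}T_{\bar u}-T_uT_{\p_x^2\bar u})-2i(T_{D_+(|u|^2)u}T_{\bar u}+T_uT_{D_+(|u|^2)\bar u})$ — wait, signs: from $i\p_t u=-\p_x^2 u-2D_+(|u|^2)u$ one gets $\p_t u=i\p_x^2u+2iD_+(|u|^2)u$, so $\p_t\bar u=-i\p_x^2\bar u-2i\overline{D_+(|u|^2)u}$. Substituting, the "linear" (Schrödinger) part should match the $[B_u,D]$-type terms by the standard computation, and the "nonlinear" part — the $D_+(|u|^2)$ terms — must match the $[P,B_u]$ terms; this matching is exactly the content of the Lax structure and is the reason $B_u$ contains the quartic term $iP^2$.

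**Main obstacle.** The hard part will be bookkeeping the constant-mode corrections peculiar to the torus. On $\R$ the identity $f=\Pi f+\overline{\Pi\bar f}$ is clean, whereas on $\T$ one has $f=\Pi f+\overline{\Pi\bar f}-\ps{f}{1}$, so every time I split $|u|^2$ or commute $T_{\bar u}$ past $\Pi$ there is a potential extra rank-one term involving $\ps{\cdot}{1}$ or $\ps{\cdot}{u}$ (recall the footnote: $L_u1=-\ps{1}{u}u$). I would need to verify that all these constant-term contributions either cancel in the commutators or are absorbed, so that the final operator identity is literally the same as G\'erard–Lenzmann's on $\R$. Once the algebra is organized — ideally by proving two or three clean lemmas about $[D,T_f]$, about $T_fT_g$ versus $T_{fg}$, and about how $D_+$ interacts with Toeplitz products — the rest is a finite, if tedious, expansion, and I would present it by grouping terms by their "type" (quadratic in $u$ with one derivative, quadratic with two derivatives, quartic) and checking each group vanishes.
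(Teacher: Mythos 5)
Your overall strategy coincides with the paper's: differentiate $L_u=D-T_uT_{\bar u}$ in time, substitute $\p_tu$ from \eqref{CS+} into $T_{\p_tu}T_{\bar u}+T_uT_{\p_t\bar u}$, expand $[B_u,L_u]$ with the Leibniz rule $D(T_fh)=-iT_{\p_xf}h+T_fDh$, and then deal with the zero--mode corrections peculiar to $\T$. The problem is that your write--up stops exactly where the proof actually lives: the sentence ``I would need to verify that all these constant-term contributions either cancel in the commutators or are absorbed'' defers the only nontrivial step. After the algebraic expansion, everything reduces to the single identity (this is \eqref{decomposition in Pi} in the paper's proof)
\begin{equation*}
\p_x\Pi(|u|^2)\cdot\Pi(\bar uh)\;+\;\Pi\big(\p_x\overline{\Pi(|u|^2)}\cdot\bar uh\big)\;=\;\Pi\big(\p_x|u|^2\cdot\Pi(\bar uh)\big)\,,
\end{equation*}
and the way to get it is to decompose $\bar uh=\Pi(\bar uh)+\overline{\Pi(u\bar h)}-\ps{\bar uh}{1}$ (note: the decomposition is applied to $\bar u h$, not to $|u|^2$ as your sketch suggests), then observe that $\p_x\overline{\Pi(|u|^2)}$ carries only strictly negative frequencies, so both $\Pi\big(\p_x\overline{\Pi(|u|^2)}\cdot\overline{\Pi(u\bar h)}\big)$ and $\Pi\big(\p_x\overline{\Pi(|u|^2)}\big)$ vanish, and finally recombine using $\p_x|u|^2=\p_x\Pi(|u|^2)+\p_x\overline{\Pi(|u|^2)}$ (the zero mode being killed by $\p_x$, using $\ps{\p_x|u|^2}{1}=0$). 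Without exhibiting this cancellation the argument is a plan, not a proof, since the whole point of the statement is that the torus corrections disappear in exactly this way.

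A few slips you should also repair if you carry the computation out. First, $[B_u,L_u]=[B_u,D]-[B_u,P]=[B_u,D]+[P,B_u]$; your ``$-[B_u,D]+[P,B_u]$'' has the wrong sign on the first bracket. Second, $[D,T_b]=T_{Db}=-iT_{\p_xb}$ for every symbol $b$, holomorphic or not; your item (i) is off by the factor $-i$ in the holomorphic case, and this factor matters when matching the $i$'s in $B_u$. Third, the Hankel-type identity $T_fT_{\bar g}=T_{f\bar g}-\cdots$ you invoke is not needed anywhere (the paper never introduces Hankel operators for this proposition), and on $\T$ it would itself require the same zero--mode bookkeeping you are trying to control; it is cleaner to work, as the paper does, directly with the decomposition $f=\Pi f+\overline{\Pi\bar f}-\ps{f}{1}$. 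Finally, your grouping ``linear terms come from $[B_u,D]$, nonlinear terms from $[P,B_u]$'' is not quite how the terms organize: $[iP^2,D]$ also produces quartic terms, which must be combined with those of $[T_uT_{\p_x\bar u}-T_{\p_xu}T_{\bar u},P]$ before the comparison with $\frac{d}{dt}(T_uT_{\bar u})$ can be made.
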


	\begin{proof} 
		Let $u\in \mathcal C([-T,T],H^s_+(\T))$\,, $s>\frac32\,,$ be a solution of \eqref{CS+} equation. 
		On the one hand, we have by definition of $L_u\,$ and for all $h\in H^1_+(\T)\,,$
		\begin{align*}
			\frac{dL_u(h)}{dt}
			=&-T_{\p_tu}T_{\bar{u}}(h)-T_uT_{\p_t\overline{u}}(h) 
			\\
			=&-T_{i\partial_x^2u+2u\p_x\Pi(|u|^2)}T_{\bar{u}}h-T_uT_{-i\partial_x^2\bar u+2\bar u\, \p_x\overline{\Pi(|u|^2)}}\,h\,.
		\end{align*}  
		Therefore, since $u$ belongs to the Hardy space,
		\begin{align}\label{dLu/dt}
			\frac{dL_u(h)}{dt} 
			=  i\big[T_uT_{\p_x^2\bar{u}}-T_{\p_x^2u}T_{\bar{u}}\big](h)
			\,-\,
			2u\Big[\p_x\Pi(|u|^2)\cdot\Pi(\bar{u} h)+\Pi\left(\p_x\overline{\Pi(|u|^2)}\cdot\bar u h\right)\Big]\,.
		\end{align}
		\\[0.05cm]
		On the other hand, expanding the commutator $[B_u,L_u](h)=B_uL_uh -L_uB_uh$\,, we obtain
		\begin{multline*}
			\label{commutateur}
			T_{u}T_{\partial_{x}\bar{u}}Dh
			-T_uT_{\partial_{x}\bar{u}}T_uT_{\bar{u}}h
			-T_{\partial_{x}u}T_{\bar{u}}Dh
			+T_{\partial_{x}u}T_{\bar{u}}T_uT_{\bar{u}}h
			+i(T_uT_{\bar{u}})^2Dh
			\\
			-D(T_uT_{\p_x\bar{u}}h)
			+T_{u}T_{\bar{u}}T_{u}T_{\partial_{x}\bar{u}}h
			+D(T_{\partial _{x}u}T_{\bar{u}}h)
			-T_{u}T_{\bar{u}}T_{\partial_{x}u}T_{\bar{u}}h 
			-iD(\left( T_{u}T_{\bar{u}}\right) ^{2}h)\,,
		\end{multline*}
		where by the Leibniz rule, $D(T_uh)=-iT_{\p_xu}h+T_uDh\,,\,$ so that
		\begin{gather*}
			D(T_uT_{\p_x\bar{u}}\; \cdot)=T_{u}T_{\partial_{x}\bar{u}}D-iT_{\partial_{x}u}T_{\partial_{x}\bar{u}}-iT_uT_{\partial_{x}^2\bar{u}}\,,
			\\
			D(T_{\partial _{x}u}T_{\bar{u}}\; \cdot)=T_{\partial_{x}u}T_{\bar{u}}D-iT_{\p_x^2u}T_{\bar{u}}-iT_{\partial_{x}u}T_{\partial_{x}\bar{u}}\,,
			\\
			D(\left( T_{u}T_{\bar{u}}\right) ^{2}\;\cdot)=-i[T_{\p_x u}T_{\bar{u}}T_uT_{\bar{u}}+T_{ u}T_{\p_x\bar{u}}T_uT_{\bar{u}}+T_{ u}T_{\bar{u}}T_{\p_xu}T_{\bar{u}}+T_{u}T_{\bar{u}}T_uT_{\p_x \bar{u}}]+\left( T_{u}T_{\bar{u}}\right) ^{2}D\cdot\;.
		\end{gather*} 
		As a consequence,
		\begin{align}\label{[Bu,Lu]}
			[B_u,L_u](h)=&\,i\,T_uT_{\partial_{x}^2\bar{u}}h-iT_{\p_x^2u}T_{\bar{u}}h-2\left(T_uT_{\bar{u}}T_{\p_xu}T_{\bar{u}}+T_uT_{\p_x\bar{u}}T_uT_{\bar{u}}\right)(h)\notag
			\\
			=&\, i\big[T_uT_{\p_x^2\bar{u}}-T_{\p_x^2u}T_{\bar{u}}\big]-2u\cdot\Pi\left(\p_x|u|^2\,\cdot\,\Pi(\bar{u} h)\right)\,.
		\end{align}
		\\[0.05cm]
		Comparing \eqref{dLu/dt} and \eqref{[Bu,Lu]}\,, it appears that all that remains to be proved is 
		\be\label{decomposition in Pi}
		\Big[\p_x\Pi(|u|^2)\cdot\Pi(\bar{u} h)+\Pi\big(\p_x\overline{\Pi(|u|^2)}\cdot\bar uh\big)\Big]
		=
		\Pi\left(\p_x|u|^2\cdot\Pi(\bar{u}h)\right)\,, \quad h\in H^1_+(\T)\,.
		\ee
		In fact, any complex function $f\in L^2(\T)$ can be decomposed as
		$$f=\Pi f+\overline{\Pi\bar{f}} -\ps{f}{1}\,.$$ In particular, for $f=\bar{u}h$\,, we have $\Pi(\p_x\overline{\Pi(|u|^2)}\cdot\bar uh)$ equal to
		\begin{align*}
			\Pi\left(\p_x\overline{\Pi(|u|^2)}\cdot\Pi(\bar uh)\right)
			+
			\Pi\left(\p_x\overline{\Pi(|u|^2)}\cdot\overline{\Pi( u\bar h)}\right)
			-
			\ps{\bar u h}{1}\Pi(\p_x\overline{\Pi(|u|^2)})\,,
		\end{align*}
		where the last two terms vanishes, since $\Pi$ is an orthogonal projector into the Hardy space. Therefore, the left--hand side of \eqref{decomposition in Pi} coincides with
		\begin{align*}
			\Pi\Big(\p_x\Pi(|u|^2)\cdot\Pi(\bar{u} h)\Big)+\Pi\Big(\p_x\overline{\Pi(|u|^2)}\cdot\Pi(\bar{u} h)\Big)\,,
		\end{align*}
		which is equal to $\Pi\left(\p_x|u|^2\cdot\Pi(\bar{u}h)\right)$ since $\ps{\p_x(\va{u}^2)}{1}=0\,.$
	\end{proof}

	\subsection{The explicit formula of the solution}\label{subsect: explicit formula}
	\vskip0.5cm  
	Using this Lax pair structure, we derive in this subsection the explicit formula of the solution of the focusing Calogero--Sutherland DNLS equation \eqref{CS+}\,.  To this end, we also need the shift operator introduced in the following paragraph.
	
	\vskip0.25cm
 \noindent
	\textbf{Some Preliminaries.}  We recall one of the most important operator on Hardy's space,  \textit{the shift operator}, defined on $\Ltwo$  as the isometric map 
	$$
	\Sh\colon h \in L_{+}^{2}(\T)  \longmapsto \mathrm{e}^{i x} h \in L_{+}^{2}(\T)\,. 
	$$
	Its adjoint in $\Ltwo$ is given by $$\displaystyle \Sh^{*}:h \in L_{+}^{2}(\T)  \longmapsto S^{*} h= T_{\mathrm{e}^{-i x}}h= \Pi(\mathrm{e}^{-i x} h )\in L_{+}^{2}(\T)\,\ .$$
	In particular, we have 
	\begin{equation}\label{SS*}
		\Sh^{*} \Sh=\Id, \qquad \Sh \Sh^{*}=\Id-\langle\,\cdot \mid 1\rangle 1 ,
	\end{equation}
	leading to the fact that the shift map $S$ is injective but not surjective.
	Pointing out that the Hardy space can be defined with different approaches, for instance,
	$$
	\mathbb H_2(\mathbb D):=\lracc{u\in \mathrm{Hol}(\D)\,;\,\sup_{0\leq r<1}\int_0^{2\pi} \va{u(r\eee^{i\theta})}^2\,\dfrac{d\theta}{2\pi}<\infty}\,,
	$$ 
	which is equivalent 
	via the isometric isomorphism
	$$
	u(z)=\sum_{k\geq 0}\fr{u}(k)z^k \longmapsto u^*(x):=\sum_{k\geq 0}\fr{u}(k)\eee^{ikx}\,,
	$$
	to the Hardy space $L^2_+(\T)$ defined in \eqref{L2+}\,,
	then one could read the shift operator acting as multiplication by $z\,.$
	In what follows, we use indifferently $u$ and the boundary function $u^*$\,, by making a slight abuse of notation and denoting both by~$u$\,.

	\begin{center}
		***
	\end{center}
	\vskip0.2cm
	Coming back to the problem, we need some commutator identities to obtain the explicit formula. This is the purpose of the next Lemma.

	\begin{lemma}\label{commutator} 
		Let $u\in H^s_+(\T)$\,, $s>\frac32\,,$  then
		\begin{gather}\label{commutateurLuS}
			[S^*,L_u]  = S^*-\ps{\,\cdot}{u}S^*u\,,
			\\
			[S^*, B_u]=i\Big( S^*L_u^2 \,-\, (L_u+\Id)^2S^* \Big)\,.\notag
 		\end{gather}
	\end{lemma}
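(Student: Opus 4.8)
The plan is to compute both commutators directly from the definitions $L_u = D - T_u T_{\bar u}$ and $B_u = T_u T_{\partial_x \bar u} - T_{\partial_x u} T_{\bar u} + i (T_u T_{\bar u})^2$, using as the basic building block the commutator identities between $S^* = T_{\eee^{-ix}}$ and the operators $D$ and $T_\phi$. The two facts I would establish first are: (1) $[S^*, D] = S^*$, which follows since $D$ acts as multiplication by $n$ on the Fourier mode $\eee^{inx}$ and $S^*$ shifts $n \mapsto n-1$ on $L^2_+(\T)$; and (2) $S^* T_\phi = T_{\eee^{-ix}\phi} + (\text{rank-one correction})$, more precisely $[S^*, T_\phi] = $ something supported on the constant mode. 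Concretely, from $S^* S = \Id$ and $S S^* = \Id - \langle \cdot \mid 1\rangle 1$ in \eqref{SS*}, one gets the Toeplitz composition rules $T_{\eee^{-ix}} T_\phi = S^* T_\phi$ and the relation $S^* T_u = T_u S^* + (\text{correction involving } \langle \cdot \mid 1\rangle)$; tracking these carefully is what produces the rank-one terms $\langle \cdot \mid u\rangle S^* u$.

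For the first identity \eqref{commutateurLuS}: write $[S^*, L_u] = [S^*, D] - [S^*, T_u T_{\bar u}] = S^* - \big([S^*,T_u]T_{\bar u} + T_u[S^*,T_{\bar u}]\big)$. The key computation is $[S^*, T_u]$. Since $T_u h = \Pi(uh)$, one has $S^* T_u h = \Pi(\eee^{-ix} u h)$; meanwhile $T_u S^* h = \Pi(u\,\Pi(\eee^{-ix}h)) = \Pi(u \eee^{-ix} h) - \Pi(u \langle \eee^{-ix}h \mid 1\rangle \cdot(\text{the removed mode}))$ — I would unwind this using $\Pi(\eee^{-ix}h) = \eee^{-ix}h - \hat h(0)\eee^{-ix}$, so $[S^*, T_u]h = \hat h(0)\, T_u(\eee^{-ix}) = \hat h(0)\, S^* u$. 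Note $\hat h(0) = \langle h \mid 1\rangle$. For the $T_{\bar u}$ factor one similarly finds $[S^*, T_{\bar u}]h = \langle h \mid u\rangle \eee^{-ix}\cdot(\text{constant})$ — here the relevant fact is $T_{\bar u} h$ has zeroth Fourier coefficient $\langle uh\mid$ ... $\rangle$; I'd rather organize this as: $[S^*, T_{\bar u}] = $ rank-one, and then $T_u [S^*, T_{\bar u}] T_{\bar u}$-style bookkeeping collapses using $S^* 1 = 0$. Substituting, the two rank-one contributions combine: $[S^*, T_u]T_{\bar u} h = \langle T_{\bar u}h \mid 1\rangle S^* u = \langle h\mid u\rangle S^* u$ (using $\langle \Pi(\bar u h)\mid 1\rangle = \langle \bar u h \mid 1\rangle = \langle h \mid u\rangle$), while $T_u[S^*,T_{\bar u}]h$ vanishes because the rank-one range is spanned by something $\Pi$ kills or because $S^*$ annihilates the relevant constant. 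This yields $[S^*,L_u] = S^* - \langle \cdot \mid u\rangle S^* u$.

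For the second identity, I would iterate: $[S^*, B_u]$ splits into $[S^*, T_uT_{\partial_x\bar u}] - [S^*, T_{\partial_x u}T_{\bar u}] + i[S^*, (T_uT_{\bar u})^2]$. Rather than expanding everything in Toeplitz commutators (which is messy), the cleaner route is to use the already-proven first identity together with the algebraic relations between $B_u$, $L_u$ and $L_u^2$. One observes that $B_u$ can be rewritten in terms of $L_u$-building blocks: indeed $T_u T_{\partial_x\bar u} - T_{\partial_x u}T_{\bar u}$ is essentially $[D, T_uT_{\bar u}]$ up to sign/rank-one terms (by the Leibniz rule $D(T_u h) = -iT_{\partial_x u}h + T_u D h$ quoted in the Proposition's proof), so $B_u = -i[D, T_uT_{\bar u}] + i(T_uT_{\bar u})^2 + (\text{l.o.t.}) = -i[D, D - L_u] + i(D-L_u)^2 + \dots = i[D,L_u] + i(D - L_u)^2 + \dots$. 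Expanding $i(D-L_u)^2 = i(D^2 - DL_u - L_u D + L_u^2)$ and combining with $i[D,L_u] = i(DL_u - L_uD)$ gives $B_u = iD^2 - 2iL_uD + iL_u^2 + (\text{rank-one/commutator corrections})$, i.e. morally $B_u = i(L_u - D)^2 - 2iD(L_u-D) \cdots$; the point is to get $B_u$ expressed as a polynomial in $D$ and $L_u$ modulo terms whose $S^*$-commutator is easy. Then $[S^*, B_u]$ is computed from $[S^*,D]=S^*$, $[S^*,L_u] = S^* - \langle\cdot\mid u\rangle S^* u$, and the derivation (Leibniz) property of $[S^*,\cdot]$, and after collecting all the rank-one debris (which must cancel, since the claimed answer $i(S^*L_u^2 - (L_u+\Id)^2 S^*)$ has none beyond what $L_u$ carries) one arrives at the stated formula. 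The cleanest sanity check of the final form: $i(S^* L_u^2 - (L_u+\Id)^2 S^*) = i(S^*L_u^2 - L_u^2 S^* - 2L_u S^* - S^*) = i([S^*,L_u]\,\text{-type terms})$, consistent with $B_u \sim$ quadratic in $L_u$ shifted.

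The main obstacle is the careful bookkeeping of the rank-one correction terms coming from $SS^* = \Id - \langle \cdot \mid 1\rangle 1$ and from the torus decomposition $f = \Pi f + \overline{\Pi \bar f} - \langle f\mid 1\rangle$: every time a $\Pi$ meets an $S^*$ or a product of Toeplitz operators, a constant-mode term is created or destroyed, and one must verify that in the final expressions all of these either cancel in pairs or assemble into the single rank-one term $\langle \cdot \mid u\rangle S^* u$ in the first identity and into nothing extra in the second. I expect the second identity to be substantially more delicate than the first precisely because $B_u$ is cubic in the Toeplitz operators, so there are many more correction terms to chase; the trick of re-expressing $B_u$ through $D$ and $L_u$ before commuting with $S^*$ is what keeps this manageable.
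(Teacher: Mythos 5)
Your treatment of the first identity is essentially correct and amounts to the same computation as the paper's (the paper proves the adjoint identity $L_uS=SL_u+S-\ps{\,\cdot}{S^*u}u$ and takes adjoints; you commute $S^*$ directly through $D$ and the Toeplitz factors). One imprecision: $[S^*,T_{\bar u}]$ is not merely "rank-one with vanishing contribution" — it is exactly zero, because the negative-frequency part discarded by $\Pi$ stays at negative frequencies after multiplication by $\eee^{-ix}$; with that, $[S^*,L_u]=S^*-[S^*,T_u]T_{\bar u}=S^*-\ps{\,\cdot}{u}S^*u$ follows as you say.

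For the second identity there is a genuine gap. Your plan rests on the claim that $T_uT_{\p_x\bar u}-T_{\p_xu}T_{\bar u}$ is "essentially $[D,T_uT_{\bar u}]$ up to sign/rank-one terms". It is not: the Leibniz rule gives $[D,T_u]=T_{Du}$ and $[D,T_{\bar u}]=T_{D\bar u}$, hence $[D,T_uT_{\bar u}]=T_{Du}T_{\bar u}+T_uT_{D\bar u}=-i\big(T_{\p_xu}T_{\bar u}+T_uT_{\p_x\bar u}\big)$, i.e.\ the \emph{symmetric sum}, whereas $B_u$ contains the \emph{antisymmetric difference}; the discrepancy $2T_{\p_xu}T_{\bar u}$ is a genuine first-order operator, not a lower-order or rank-one correction. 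Consequently $B_u$ is not a polynomial in $D$ and $L_u$ (one has instead $B_u=iL_u^2-iD^2+2iT_uDT_{\bar u}$, with an irreducible extra term whose commutator with $S^*$ equals $T_uS^*T_{\bar u}$ and does not disappear for free), so the step "express $B_u$ through $D,L_u$ and collect the rank-one debris, which must cancel" cannot be carried out as written — and that cancellation is precisely what has to be proved, since the target is an exact operator identity. The paper's route avoids any such rewriting and is in fact shorter than you feared: since $S^*$ commutes with $T_{\bar u}$ (and with $T_{\p_x\bar u}$) and $[S^*,T_u]=\ps{\,\cdot}{1}S^*u$, each block of $B_u$ has an explicit rank-one commutator, $[S^*,T_uT_{\p_x\bar u}]=\ps{\,\cdot}{\p_xu}S^*u$, $[S^*,T_{\p_xu}T_{\bar u}]=\ps{\,\cdot}{u}S^*\p_xu$, $[S^*,(T_uT_{\bar u})^2]=\ps{\,\cdot}{T_uT_{\bar u}u}S^*u+T_uT_{\bar u}\big(\ps{\,\cdot}{u}S^*u\big)$; summing, using $S^*D=(D+\Id)S^*$ and $L_u=D-T_uT_{\bar u}$, one gets $[S^*,B_u]=-i\,\ps{\,\cdot}{u}S^*u\,L_u-i(L_u+\Id)\big(\ps{\,\cdot}{u}S^*u\big)$, and substituting $-\ps{\,\cdot}{u}S^*u=S^*L_u-L_uS^*-S^*$ from the first identity yields $[S^*,B_u]=i\big(S^*L_u^2-(L_u+\Id)^2S^*\big)$. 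To repair your argument you should either adopt this blockwise computation or redo your reduction with the correct expression for $B_u$, explicitly handling the term $2iT_uDT_{\bar u}$.
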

	
	\begin{proof}
		The first identity is a direct consequence of proving 
		\be\label{[L_u,S]}
		L_uS =S L_u+S-\ps{\,\cdot}{S^*u}u
		\,,
		\ee
		and taking the adjoint of all these operators in $L^2_+(\T)\,$.
		Recall $L_u=D-T_uT_{\bar u}$\,. On the one hand, we have by the Leibniz rule  $D(S h)=S(\Id+ D)h$\,, for all $h \in H_{+}^{1}(\T)$\,. On the other hand, observe for all $f\in L^2(\T)$,  
		$$
		\Pi\left(\Sh f\right)=S \Pi(f)+\left\langle S f \mid 1\right\rangle\,.
		$$
		In particular, for $f= h\bar u$\,, we infer
		\be\label{commutateurTuS}
		T_{\bar{u}}(\Sh h)=\Sh T_{\bar{u}} h+\langle  \Sh h \mid u\rangle.
		\ee
		Hence, taking into consideration that the operators $S$ and $T_u$ commute, we deduce identity \eqref{[L_u,S]}\,.
		
		\vskip0.1cm
		Now, to prove the second point of \eqref{commutateurLuS} we use the first point. Recall  that $B_u=T_{u}T_{\partial_x\bar{u}}-T_{\partial_x{u}}T_{\bar{u}} +i(T_{u}T_{\bar{u}})^2$\,, and 
		by \eqref{commutateurTuS} we have $[T_{\bar u}, S]=\ps{\,\cdot}{S^*u}$, in other words, $[S^*, T_u]=\ps{\,\cdot}{1}S^*u $. 
		Thus, after noting that $S^*$ and $T_{\bar u}$ commute, we deduce
		\begin{gather*}
			[S^*,T_uT_{\p_x\bar u}]=\ps{\,\cdot}{\p_x u}S^*u\,.
			\\
			[S^*,T_{\p_x u}T_{\bar u}]=\ps{\,\cdot}{ u}S^*\p_x u\,,
			\\
			[S^*,(T_{u}T_{\bar{u}})^2]=\ps{\,\cdot}{ T_{u}T_{\bar{u}}u}S^*u+T_{u}T_{\bar{u}}(\ps{\,\cdot}{u} S^*u)\,.
		\end{gather*}
		As a result, 
		$$
		[S^*, B_u]=\ps{\,\cdot}{\p_x u}S^*u-\ps{\,\cdot}{ u}S^*\p_x u
		+i\ps{\,\cdot}{ T_{u}T_{\bar{u}}u}S^*u
		+iT_{u}T_{\bar{u}}\,(\ps{\,\cdot}{u} S^*u)\,.
		$$
		Using the adjoint Leibniz rule $S^*D=(D+\Id)S^*$ and since $L_u=D-T_{u}T_{\bar{u}}$\,, we infer
		\begin{align*}
			[S^*, B_u]
			=&\,
			-i\ps{\,\cdot}{L_uu} S^*u- iL_u (\ps{\,\cdot}{u} S^*u)-i\ps{\,\cdot}{u} S^*u  
			\\
			=&\,
			-i\,(\ps{\,\cdot}{u} S^*u) L_u\,-\,i(L_u+\Id)(\ps{\,\cdot}{u} S^*u)\,.
		\end{align*}
		We conclude by the first identity of \eqref{commutateurLuS} that $-\ps{\,\cdot}{u} S^*u=S^*L_u-L_uS^*-S^*$ and hence
		$$
		[S^*, B_u]=i\Big( S^*L_u^2 \,-\, (L_u+\Id)^2S^* \Big)\,.
		$$
	\end{proof}

	\begin{prop}\label{Lu self-adjoint}
		Let $u(t)\in H^s_+(\T)\,$, $s>\frac32\,$. The Lax operator $(L_{u(t)},H^1_+(\T))$ is a self--adjoint operator with a discrete spectrum bounded from below. Moreover, $B_{u(t)}$ is a skew--symmetric bounded operator on $\Ltwo\,$.
	\end{prop}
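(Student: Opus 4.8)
The plan is to treat the three assertions in turn: self-adjointness of $L_u$, the nature of its spectrum, and the skew-symmetric boundedness of $B_u$.

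First I would establish that $L_u = D - T_uT_{\bar u}$ is self-adjoint on the domain $H^1_+(\T)$. The operator $D = -i\partial_x$ is the standard self-adjoint operator on $L^2_+(\T)$ with domain $H^1_+(\T)$, with eigenvalues $n \in \N_{\geq 0}$ on the Fourier basis $(\eee^{inx})_{n\geq 0}$. For the perturbation, note $T_u T_{\bar u}$ is bounded on $L^2_+(\T)$ when $u \in H^s_+(\T)$ with $s > \frac12$ (indeed $s>\frac32$), since $T_u, T_{\bar u}$ are bounded whenever $u \in L^\infty$, which follows from the Sobolev embedding $H^s_+(\T) \hookrightarrow L^\infty(\T)$ for $s > \frac12$. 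Moreover $(T_u T_{\bar u})^* = T_{\bar u}^* T_u^* = T_u T_{\bar u}$ since $T_v^* = T_{\bar v}$ in $L^2_+(\T)$; so $T_u T_{\bar u}$ is bounded and self-adjoint. By the Kato–Rellich theorem, $L_u = D - T_uT_{\bar u}$ is then self-adjoint on $\mathrm{Dom}(D) = H^1_+(\T)$ and bounded from below, the lower bound being $-\|T_uT_{\bar u}\|_{\mathcal{L}(L^2_+)} \geq -\|u\|_{L^\infty}^2$.

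Next, for the spectrum: since $D$ has compact resolvent on $L^2_+(\T)$ (its eigenvalues $n \to +\infty$, so $(D+\lambda)^{-1}$ is compact for $\lambda$ large), and $L_u$ differs from $D$ by a bounded operator, $L_u$ also has compact resolvent; hence its spectrum is discrete, consisting of isolated eigenvalues of finite multiplicity accumulating only at $+\infty$, and it is bounded from below by the previous paragraph. Concretely one can write $(L_u + \lambda)^{-1} = (D+\lambda)^{-1}(\Id - T_uT_{\bar u}(D+\lambda)^{-1})^{-1}$ for $\lambda$ large enough that $\|T_uT_{\bar u}(D+\lambda)^{-1}\| < 1$, exhibiting the resolvent as a product of a compact and a bounded operator.

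Finally, for $B_u = T_uT_{\partial_x\bar u} - T_{\partial_x u}T_{\bar u} + i(T_uT_{\bar u})^2$: each factor $T_v$ with $v \in L^\infty$ is bounded, and $u, \partial_x u \in L^\infty(\T)$ because $u \in H^s_+(\T)$ with $s > \frac32$ implies $\partial_x u \in H^{s-1}_+(\T) \hookrightarrow L^\infty(\T)$; so all four products are bounded operators on $L^2_+(\T)$, hence $B_u$ is bounded. For skew-symmetry, compute the adjoint using $T_v^* = T_{\bar v}$: we get $(T_uT_{\partial_x\bar u})^* = T_{\partial_x u}T_{\bar u}$ and $(T_{\partial_x u}T_{\bar u})^* = T_u T_{\partial_x \bar u}$, so the first two terms swap with a sign, giving $(T_uT_{\partial_x\bar u} - T_{\partial_x u}T_{\bar u})^* = -(T_uT_{\partial_x\bar u} - T_{\partial_x u}T_{\bar u})$; and $(i(T_uT_{\bar u})^2)^* = -i((T_uT_{\bar u})^*)^2 = -i(T_uT_{\bar u})^2$ since $T_uT_{\bar u}$ is self-adjoint. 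Summing, $B_u^* = -B_u$. The only mild subtlety — the main thing to be careful about rather than a true obstacle — is checking that the formal domain manipulations ($D$ on $H^1_+$, the resolvent identity, the Leibniz rule $D T_u = T_u D - i T_{\partial_x u}$ used implicitly elsewhere) are legitimate at the regularity $s > \frac32$; this is handled by the $L^\infty$-boundedness of $u$ and $\partial_x u$ noted above together with Kato–Rellich.
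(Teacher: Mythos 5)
Your proposal is correct and follows essentially the same route as the paper: self--adjointness of $L_u$ on $H^1_+(\T)$ via Kato--Rellich (the paper checks relative boundedness of $T_uT_{\bar u}$ using $\|u\|_{L^\infty}$, you use the equivalent fact that it is a bounded symmetric perturbation), discreteness of the spectrum via compactness of the resolvent (Rellich--Kondrachov in the paper, your explicit factorization $(L_u+\la)^{-1}=(D+\la)^{-1}(\Id-T_uT_{\bar u}(D+\la)^{-1})^{-1}$ is the same substance), and skew--symmetry plus boundedness of $B_u$ by the adjoint identity $T_v^*=T_{\bar v}$, which the paper leaves as an observation and you carry out explicitly.
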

	
	\begin{proof}The proof is a direct consequence of Kato--Rellich's theorem.
		Indeed, the differential operator $(D,H^1_+(\T))$ is a positive self--adjoint operator on the Hardy space $L^2_+(\T)$. In addition,  $T_u T_{\overline{u}}$ is relatively bounded with respect to $D$, since for all $h \in H^1_+(\T)$\,, $$
		\left\| T_{u}T_{\bar{u}}h\right\|_{L^{2}}
		\leq\left\| u\right\|^2 _{L^{\infty}} \left\| h\right\| _{L^{2 }}\ \leq \eps\Ldeux{Dh}+\left\| u\right\|^2 _{L^{\infty}}\Ldeux{h},\quad 0\leq\eps<1.$$
		Furthermore, the spectrum of $L_u$ is discrete since the resolvent of $L_u$ is compact by the Rellich--Kondrachov theorem. And it is bounded from below as the operator $L_u$ is a semi--bounded operator.
		Besides, one can easily observe by definition of $B_u=T_uT_{\partial_x\overline u}-T_{\partial_xu}T_{\overline u} +i(T_uT_{\overline u})^2\,,$ that this operator is a skew--symmetric operator.
	\end{proof}

	In view of the previous proposition, 
	we denote by $(\la_n(u))_{n\geq 0}$ the eigenvalues of $L_u$ ordered by increasing modulus, and taking into account their multiplicity
	$$
	\la_0(u)\leq \la_1(u) \leq \la_2(u)\leq\ldots \leq \la_n(u)\leq\ldots
	$$

	\begin{Rq}[Isospectral property]\label{val propres conservées}
		As discovered in the modern theory of integrable systems \cite{GGKM67} and reformulated by \cite{Lax68}, the eigenvalues of a Lax operator are integrals of motion of the associated equation.
  In fact, any Lax operator satisfies
the isospectral property, namely, there exists a one-parameter family of unitary
operators $U(t)$ 
  such that $U(t)^{-1}L_{u(t,\cdot)}U(t)$  is independent of $t$\,. That is,
		\be\label{Isospectral property}
		U(t)^{-1}L_{u(t)}U(t)=L_{u_0}\,.
		\ee
		This implies, that the eigenvalues $(\la_n(u))$ of $L_u$ are all conserved along the flow of \eqref{CS+}\,. Or in other words, for all $n\in\Nzero$\,, $\la_n(u(t))=\la_n(u_0)$ for all $t\in \R\,.$
	\end{Rq}

	The following lemma provides a rewrite of the Calogero--Sutherland DNLS equation focusing on \eqref{CS+} in terms of the Lax operators $L_u$ and $B_u$\,. This will certainly be useful during the proof of the dynamical explicit formula.

	\begin{lemma}\label{Reformulation de CS à l'aide des op. de Lax}
		Given $u\in \mathcal C([-T,T],H^s_+(\T))$\,, $s>\frac32\,,$ a solution of \eqref{CS+} equation, then $$
		\p_t u=B_u u -iL^2_uu\,.$$
	\end{lemma}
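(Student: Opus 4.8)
The plan is a direct verification, using only the Toeplitz calculus already exploited in the proof of Proposition~\ref{The lax pair prop}: the Leibniz-type rule $D(T_vh)=T_vDh-iT_{\partial_xv}h$ (valid for smooth enough symbols $v$, since $\Pi$ commutes with $\partial_x$ on $\T$), and the fact that the Szeg\H{o} projector $\Pi$ acts as the identity on a product of functions of $L^2_+(\T)$. The regularity $s>\frac32$ is exactly what ensures $u,\partial_xu\in L^\infty(\T)$, so that every expression below makes sense and the pointwise form of \eqref{CS+} is legitimate; it reads
\[
\partial_t u \;=\; i\partial_x^2u + 2u\,\partial_x\Pi(\va{u}^2)\;=\;-iD^2u+2iD_+(\va{u}^2)u .
\]

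Write $A:=T_uT_{\bar u}$, so that $L_u=D-A$ and $L_u^2=D^2-DA-AD+A^2$. Expanding $B_uu-iL_u^2u$, the term $i(T_uT_{\bar u})^2u=iA^2u$ in $B_uu$ cancels the $-iA^2u$ produced by $-iL_u^2u$, leaving
\[
B_uu-iL_u^2u=T_uT_{\partial_x\bar u}u-T_{\partial_xu}T_{\bar u}u-iD^2u+iDAu+iADu .
\]
Applying the Leibniz rule twice gives $DA=AD-iT_uT_{\partial_x\bar u}-iT_{\partial_xu}T_{\bar u}$, hence $iDAu+iADu=2iADu+T_uT_{\partial_x\bar u}u+T_{\partial_xu}T_{\bar u}u$. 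Substituting this, the two copies of $T_{\partial_xu}T_{\bar u}u$ cancel and (using $D^2=-\partial_x^2$) one is left with
\[
B_uu-iL_u^2u=i\partial_x^2u+2T_uT_{\partial_x\bar u}u+2iT_uT_{\bar u}Du .
\]

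Finally I would identify the last two terms with $2u\,\partial_x\Pi(\va{u}^2)$. Since $T_{\partial_x\bar u}u=\Pi(u\,\partial_x\bar u)$ and $T_{\bar u}Du=-i\Pi(\bar u\,\partial_xu)$, their combination equals
\[
2T_u\big(\Pi(u\,\partial_x\bar u)+\Pi(\bar u\,\partial_xu)\big)=2T_u\big(\Pi(\partial_x\va{u}^2)\big)=2T_u\big(\partial_x\Pi(\va{u}^2)\big),
\]
using once more that $\Pi$ commutes with $\partial_x$ on $\T$. Because $u$ and $\partial_x\Pi(\va{u}^2)$ both lie in $L^2_+(\T)$, so does their product, whence $\Pi$ is the identity on it and $T_u$ reduces to multiplication by $u$; thus this expression is $2u\,\partial_x\Pi(\va{u}^2)$. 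Comparing with the pointwise form of \eqref{CS+} displayed above yields $B_uu-iL_u^2u=i\partial_x^2u+2u\,\partial_x\Pi(\va{u}^2)=\partial_tu$, as claimed.

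There is no genuine obstacle: the only points needing a little care are the bookkeeping of the non-commuting Toeplitz operators together with the repeated use of the Leibniz rule, and the two instances where the Szeg\H{o} projector is discarded because its argument already belongs to $L^2_+(\T)$ — precisely the kind of manipulations carried out in the proof of Proposition~\ref{The lax pair prop}.
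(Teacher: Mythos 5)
Your proof is correct and follows essentially the same route as the paper: a direct verification based on the Toeplitz--Leibniz rule $D(T_vh)=T_vDh-iT_{\partial_xv}h$ and the fact that $\Pi$ acts as the identity on products of Hardy functions, the only cosmetic difference being that you expand $B_uu-iL_u^2u$ via the commutator identity for $DT_uT_{\bar u}$ and match it with $\partial_tu$, whereas the paper rewrites $\partial_tu-B_uu$ step by step into $-iL_u^2u$. The algebra and the identification $2T_u\big(\partial_x\Pi(\va{u}^2)\big)=2u\,\partial_x\Pi(\va{u}^2)$ check out.
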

	
	\begin{proof}
		By definition of $B_u:=T_uT_{\partial_x\overline u}-T_{\partial_xu}T_{\overline u} +i(T_uT_{\overline u})^2\,,$  
		\begin{align*}\p_t u-B_uu=&\, i\p_x^2 u+2iD_+(|u|^2)u-T_uT_{\p_x\overline{u}}u+T_{\partial_xu}T_{\overline u}u -i(T_uT_{\overline u})^2u
			\\
			=&-i\left[D^2 u-2u\cdot D\Pi(|u|^2)+u\cdot \Pi(D\overline{u}\cdot u)-Du\cdot \Pi (|u|^2)+(T_uT_{\overline u})^2u\right]\,.
		\end{align*}
		Applying Leibniz's rule on $D\left(u\cdot\Pi(|u|^2)\right)$\,, we infer
		\begin{align*}
			\p_t u-B_uu
			=&-i\left[D^2 u-D\left[\Pi(|u|^2)\cdot u\right]+u\Pi(D\overline{u}\cdot u)-uD\Pi (|u|^2)+(T_uT_{\overline u})^2u\right]\,.
		\end{align*}
		Again, using Leibniz's rule on the term $D\Pi(|u|^2)$\,, $$
		\p_t u-B_uu
		=-i\left[D^2 u-D(T_uT_{\overline{u}}u)-u\Pi(\overline{u}\cdot Du)+(T_uT_{\overline u})^2u\right]
		=-i L^2_uu\ .$$
	\end{proof}

	Following \cite{Ger22} and \cite{GG15}, we derive the explicit formula for the solution of the Calogero--Sutherland DNLS focusing equation.

	\begin{prop}[The explicit formula]\label{The inverse dynamical formula}
		Given $u_0\in H^s_+(\T)$\,, $s>\frac32\,,$ the solution of the focusing Calogero--Sutherland DNLS equation \eqref{CS+} is given by
		\[
		u(t,z)=\ps{(\Id-z\eee^{-it}\eee^{-2itL_{u_0}}S^*)^{-1}\,u_0}{1}\,,\qquad \qlq \,z\in \D\,.
		\]
	\end{prop}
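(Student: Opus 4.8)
The plan is to follow the strategy of Gérard--Grellier \cite{GG15} and Gérard \cite{Ger22}: exhibit a candidate formula, show the function $z\mapsto \langle (\Id - z\,\eee^{-it}\eee^{-2itL_{u_0}}S^*)^{-1}u_0 \mid 1\rangle$ is well defined and holomorphic on $\D$, and then verify that, as a function of $(t,x)$ via the boundary values, it solves \eqref{CS+} with the correct initial data. Uniqueness is already guaranteed by the local well--posedness in $H^s_+(\T)$ for $s>\frac32$ mentioned in the introduction, so the whole task reduces to checking that the explicit formula \emph{is} the solution.

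First I would record the conjugation dynamics. By the isospectral property (Remark~\ref{val propres conservées}) there is a one--parameter unitary family $U(t)$ with $\dot U(t) = B_{u(t)}U(t)$, $U(0)=\Id$, and $U(t)^{-1}L_{u(t)}U(t) = L_{u_0}$. The key observation, exactly as in \cite{Ger22}, is to compute the evolution of the vector $v(t) := U(t)^{-1}u(t) \in L^2_+(\T)$. Using Lemma~\ref{Reformulation de CS à l'aide des op. de Lax}, $\p_t u = B_u u - iL_u^2 u$, we get
\[
\p_t v = -U^{-1}\dot U U^{-1} u + U^{-1}\p_t u = -U^{-1}B_u u + U^{-1}(B_u u - iL_u^2 u) = -i\,U^{-1}L_u^2 u = -i\,L_{u_0}^2\, v\,,
\]
so that $v(t) = \eee^{-itL_{u_0}^2}u_0$. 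This already diagonalizes the ``radial'' part of the motion; the remaining ingredient is the interaction with the shift, which is where the second commutator identity of Lemma~\ref{commutator}, $[S^*,B_u] = i\big(S^*L_u^2 - (L_u+\Id)^2 S^*\big)$, enters. Following \cite{Ger22}, I would show that the operator $W(t) := U(t)^{-1}S^*U(t)$ satisfies a linear ODE: differentiating and using $\dot U = B_uU$ gives $\dot W = U^{-1}[S^*,B_u]U = i\big(U^{-1}S^*L_u^2U - U^{-1}(L_u+\Id)^2S^*U\big) = i\big(W L_{u_0}^2 - (L_{u_0}+\Id)^2 W\big)$, whose solution with $W(0)=S^*$ is
\[
W(t) = \eee^{-it(L_{u_0}+\Id)^2}\,S^*\,\eee^{itL_{u_0}^2}\,.
\]

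Next I would assemble the Fourier coefficients. Writing $\fr u(t,k) = \langle u(t) \mid \eee^{ikx}\rangle = \langle u(t)\mid S^k 1\rangle = \langle (S^*)^k u(t)\mid 1\rangle$ and inserting $u(t) = U(t)v(t)$, $(S^*)^k = (U W U^{-1})^k = U W^k U^{-1}$, together with $U(t)^*1 = 1$ (which I would justify from $B_u^*1 = 0$, itself immediate since $B_u$ is skew--adjoint and a short computation gives $B_u 1 = 0$, hence $U(t)1 = 1$ and $U(t)^{-1}1 = 1$), yields
\[
\fr u(t,k) = \langle W(t)^k v(t)\mid 1\rangle = \big\langle \big(\eee^{-it(L_{u_0}+\Id)^2}S^*\eee^{itL_{u_0}^2}\big)^k \eee^{-itL_{u_0}^2}u_0 \,\big|\, 1\big\rangle\,.
\]
The telescoping of the exponentials in the $k$--fold product — each inner $\eee^{itL_{u_0}^2}\eee^{-it(L_{u_0}+\Id)^2}$ collapses, using $(L_{u_0}+\Id)^2 - L_{u_0}^2 = 2L_{u_0}+\Id$, to $\eee^{-it(2L_{u_0}+\Id)}$ and one pulls it through $S^*$ — simplifies $\fr u(t,k)$ to $\big\langle \big(\eee^{-it}\eee^{-2itL_{u_0}}S^*\big)^k u_0 \mid 1\big\rangle$ up to the leftmost leftover factor, which evaluates on $1$ trivially. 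Summing the geometric series $u(t,z) = \sum_{k\geq 0}\fr u(t,k)z^k = \big\langle \sum_{k\geq0}\big(z\eee^{-it}\eee^{-2itL_{u_0}}S^*\big)^k u_0 \mid 1\big\rangle = \langle (\Id - z\eee^{-it}\eee^{-2itL_{u_0}}S^*)^{-1}u_0\mid 1\rangle$ gives the claim; convergence of the series for $|z|<1$ follows from $\|S^*\|\le 1$ and unitarity of $\eee^{-2itL_{u_0}}$, so $\|z\eee^{-it}\eee^{-2itL_{u_0}}S^*\|\le |z|<1$, and the Neumann series converges in operator norm.

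I expect \textbf{the main obstacle} to be the bookkeeping in the telescoping step: getting the conjugation identities for $W(t)^k$ exactly right and checking that the leftover exponential factors really do act as the identity on the relevant vectors. A secondary technical point is to make all the formal ODE manipulations rigorous — in particular justifying differentiation under the various functional calculus expressions — which is routine but requires knowing $u\in\mathcal C([-T,T],H^s_+)$ with $s>\frac32$ so that $L_{u(t)}$, $B_{u(t)}$ depend continuously on $t$ in the appropriate operator topologies and $U(t)$ is genuinely $C^1$. Finally, one must confirm the formula produces a function in the right regularity class and matches $u_0$ at $t=0$, which is immediate by setting $t=0$ and summing $\sum_k \langle (S^*)^k u_0\mid 1\rangle z^k = \sum_k \fr{u_0}(k) z^k$.
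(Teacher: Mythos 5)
Your strategy is the same as the paper's (conjugation by the unitary family $U(t)$ solving $\dot U=B_{u(t)}U$, evolution of $U(t)^*u(t)$ and of $W(t)=U(t)^*S^*U(t)$ via the identity $\p_tu=B_uu-iL_u^2u$ and Lemma~\ref{commutator}, then resummation of the Fourier coefficients as a Neumann series), but there is a genuine error in your treatment of the constant function $1$. You claim that $B_u1=0$, hence $U(t)1=1$ and $U(t)^*1=1$. On the torus this is false: $1\in L^2_+(\T)$ and a direct computation gives $T_{\bar u}1=\ps{1}{u}$, so
\[
L_u1=-\ps{1}{u}\,u\,,\qquad B_u1=-\ps{1}{u}\big(\p_xu-iT_uT_{\bar u}u\big)=-i\ps{1}{u}\,L_uu=iL_u^2 1\,,
\]
which vanishes only when $\fr{u}(0)=0$ — a condition not assumed in the statement, and one that cannot be gained along the flow since the mean $\ps{u}{1}$ is conserved. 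The paper instead computes $\frac{d}{dt}\big[U(t)^*1\big]=-U(t)^*B_{u(t)}1=-iL_{u_0}^2\,\big[U(t)^*1\big]$, whence $U(t)^*1=\eee^{-itL_{u_0}^2}1\neq 1$ in general.

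Your second step, asserting that the ``leftmost leftover factor'' $\eee^{-itL_{u_0}^2}$ coming from telescoping $W(t)^k=\eee^{-itL_{u_0}^2}\big(\eee^{-it}\eee^{-2itL_{u_0}}S^*\big)^k\eee^{itL_{u_0}^2}$ ``evaluates on $1$ trivially'', rests on the same false premise $L_{u_0}1=0$ and is equally invalid. In the correct argument it is precisely the factor $U(t)^*1=\eee^{-itL_{u_0}^2}1$ in the right-hand slot of the inner product that absorbs this leftover unitary, via $\ps{\eee^{-itL_{u_0}^2}x}{\eee^{-itL_{u_0}^2}1}=\ps{x}{1}$. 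Your two mistakes thus cancel and the final formula is correct, but the intermediate identity $\fr{u}(t,k)=\ps{W(t)^k\eee^{-itL_{u_0}^2}u_0}{1}$ is false for generic data with nonvanishing mean. (The slip is specifically a periodic phenomenon: in the real-line setting of the references you follow, the constant function does not belong to $L^2$, so this issue never arises.) With $U(t)^*1$ computed correctly, the rest of your argument goes through and coincides with the paper's proof.
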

	
	\begin{proof}
		Since  $u(t,\cdot)\in H^s_+(\T)$\,, $s>\frac32\,,$ for all $t\in [-T,T]$\,, then   for all $z\in\D$
		$$
		u(t,z)=\sum_{k=0}^{\infty} \widehat{u}(t,k) z^{k}=
		\sum_{k=0}^{\infty}\left\langle u(t)\mid \Sh^{k} 1\right\rangle z^{k}=\sum_{k=0}^{\infty}\left\langle\left(S^{*}\right)^{k} u(t) \mid 1\right\rangle z^{k}\,,
		$$
		where by the Neumann series  of
		$$
		\sum_{k=0}^{\infty}\left(z \Sh^{*}\right)^{k}=\left(\mathrm{Id}-z \Sh^{*}\right)^{-1}\,,
		$$ 
		we infer
		\be\label{formule inversion spectral Hardy}
		u(t,z)=\left\langle\left(\Id-z \Sh^{*}\right)^{-1} u(t) \mid 1\right\rangle, \qquad \forall\, z\in\D \,.
		\ee
        Now, consider  a one--parameter family $U(t)$ solution of the Cauchy problem
        \be\label{Cauchy problem unitary operator}
		\begin{cases}\frac{d}{dt}U(t) =B_{u(t,\cdot)}\, U(t)\\
			U( 0) =\Id.
		\end{cases}
		\ee 
  Observe that $U(t)$ is a unitary operator since $B_u$ is skew-adjoint. Moreover, using the Lax pair structure of Proposition~\ref{The lax pair prop}\,, and \eqref{Cauchy problem unitary operator}\,,
  $$
    \frac{d}{dt}(U(t)^* L_{u(t)}U(t))=0\,,
  $$
 and thus 
 \be\label{U(t)*LuU(t)=Lu0}
    U(t)^* L_{u(t)}U(t)=L_{u_0}\,.
 \ee
 Hence, by applying $U(t)^*$ to both sides of the inner product \eqref{formule inversion spectral Hardy}\,,
		\begin{align}\label{Apply U(t) in the inversion formula}
			u(t,z)
			=&\,\left\langle U(t)^*\left(\Id-z \Sh^{*}\right)^{-1} u(t) \mid U(t)^* 1\right\rangle
			\\
			=&\,\left\langle \,\left(\Id-z U(t)^*S^{*}U(t)\right)^{-1} U(t)^* u(t) \mid U(t)^* 1\right\rangle\,.\notag
		\end{align}
  The aim is to express differently $U(t)^*S^{*}U(t)\,,$ $\;U(t)^* u(t)$ and $U(t)^* 1\,.$ Using \eqref{Cauchy problem unitary operator} and since $B_u$ is a skew--adjoint operator (Proposition~\ref{Lu self-adjoint}), we find
		\begin{itemize}
			\item $\frac{d}{dt}[U(t)^*1] 
			\,=\, 
			-U(t)^*B_{u(t)}1
			\,=\,
			-iU(t)^*L^2_{u(t)}1$
			\\
			\item $\frac{d}{dt}[U(t)^*u(t)] 
			= 
			-U(t)^*B_{u(t)}u(t)+U(t)^*\p_tu(t)
			=
			-iU(t)^*L^2_{u(t)}u(t)$ by Lemma~\ref{Reformulation de CS à l'aide des op. de Lax}
			\\
			\item $\frac{d}{dt}[U(t)^*S^*U(t)]  =-U(t)^*B_{u(t)}S^*U(t)+U(t)^*S^*B_{u(t)}U(t)
			=U(t)^*\,[S^*, B_{u(t)}]\,U(t)$\,,
		\end{itemize}
		where the third point is equal to $$
		\frac{d}{dt}[U(t)^*S^*U(t)] =iU(t)^*\Big( S^*L_{u(t)}^2 \,-\, (L_{u(t)}+\Id)^2S^* \Big)U(t)\,,
		$$ by Lemma~\ref{commutator}\,. 
		Therefore, applying the identity $U(t)^*L_{u(t)}=L_{u_0}U(t)^*$ of \eqref{U(t)*LuU(t)=Lu0}\,, we deduce
		\begin{itemize}
			\item $ \frac{d}{dt}[U(t)^*1] 
			= -iL^2_{u_0}[U(t)^*1]$
			\\
			\item $\frac{d}{dt}[U(t)^*u(t)] 
			= -iL^2_{u_0}[U(t)^*u(t)]$
			\\
			\item $\frac{d}{dt}[U(t)^*S^*U(t)] 
			= i\Big([U(t)^*S ^*U(t)]L^2_{u_0}
			\,-\,
			(L_{u_0}+\Id)^2 [U(t)^*S ^*U(t)]
			\Big)\,.$
		\end{itemize}
		As a consequence,
		\be\label{U(t)*1}
		U(t)^*1=\eee^{-itL^2_{u_0}}1\,,
		\qquad 
		\qquad 
		U(t)^*u(t)=\eee^{-itL^2_{u_0}}u_0\,,
		\ee
		and 
		\be\label{U(t)*S*U(t)}
		U(t)^*S^*U(t)=\eee^{-it(L_{u_0}+\Id)^2}S^*\eee^{itL_{u_0}^2}\,.
		\ee
		Combining \eqref{Apply U(t) in the inversion formula}\,, \eqref{U(t)*1} and \eqref{U(t)*S*U(t)}, the claimed formula follows.
	\end{proof}
	\vskip0.5cm
	
	\subsection{Global well--posedness of (CS\texorpdfstring{$^+$}{+}) 
		in \texorpdfstring{$H^s_+(\T)\,,$}{Hs+(T),} \texorpdfstring{$s>\frac32$ }{s>3/2}}
	\label{GWP-in-H2-if-u0<1}
	To prove the global well--posedness of \eqref{CS+}\,, we  need to derive some conservation laws and energy estimates.
	
	\begin{lemma}[Conservation laws]\label{loi de conservation}
		Let $u\in \mathcal C \big([-T,T],H^{r}_+(\T)\big)\,,$ $r>\frac32$\,, solution of \eqref{CS+}\,. For all $\la>\!>0\,,$ the family $\lracc{\mathcal{H}_s(u):=\ps{(L_u+\la)^s\,u}{u}\,;\, 0\leq s\leq 2r}$ is conserved by the flow of \eqref{CS}\,. 
	\end{lemma}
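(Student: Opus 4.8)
The plan is to reduce the conservation of $\mathcal{H}_s(u)=\langle (L_u+\lambda)^s u\mid u\rangle$ for all $s\in[0,2r]$ to two facts: the isospectral identity $U(t)^*L_{u(t)}U(t)=L_{u_0}$ from Remark~\ref{val propres conservées} (equivalently \eqref{U(t)*LuU(t)=Lu0} in the proof of Proposition~\ref{The inverse dynamical formula}), and the evolution identity $U(t)^*u(t)=\eee^{-itL_{u_0}^2}u_0$ established in \eqref{U(t)*1}. First I would fix $\lambda$ large enough that $L_u+\lambda$ is a positive self-adjoint operator — this is possible uniformly on $[-T,T]$ because, by Proposition~\ref{Lu self-adjoint}, $L_u$ is semibounded from below with a bound depending only on $\|u\|_{L^\infty}$, hence on $\sup_{t}\|u(t)\|_{H^r}$ which is finite by continuity on the compact interval. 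With $L_u+\lambda>0$ the fractional power $(L_u+\lambda)^s$ is unambiguously defined by the spectral theorem for every real $s\geq 0$ (the restriction $s\leq 2r$ only matters for the finiteness we check below).

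Next I would compute. Since $U(t)$ is unitary and $U(0)=\Id$, write
\[
\mathcal{H}_s(u(t))=\ps{(L_{u(t)}+\lambda)^s u(t)}{u(t)}=\ps{U(t)^*(L_{u(t)}+\lambda)^s U(t)\,U(t)^*u(t)}{U(t)^*u(t)}\,.
\]
By the functional calculus and \eqref{U(t)*LuU(t)=Lu0}, $U(t)^*(L_{u(t)}+\lambda)^s U(t)=(U(t)^*L_{u(t)}U(t)+\lambda)^s=(L_{u_0}+\lambda)^s$. Substituting $U(t)^*u(t)=\eee^{-itL_{u_0}^2}u_0$ from \eqref{U(t)*1}, and noting that $\eee^{-itL_{u_0}^2}$ is a unitary that commutes with $(L_{u_0}+\lambda)^s$ (same functional calculus for the self-adjoint operator $L_{u_0}$), we get
\[
\mathcal{H}_s(u(t))=\ps{(L_{u_0}+\lambda)^s\eee^{-itL_{u_0}^2}u_0}{\eee^{-itL_{u_0}^2}u_0}=\ps{(L_{u_0}+\lambda)^s u_0}{u_0}=\mathcal{H}_s(u_0)\,,
\]
which is the claim. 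I should also remark that $\mathcal{H}_s(u(t))$ is finite in this range: $(L_u+\lambda)^{s/2}$ maps $H^{s/2}_+(\T)$ continuously into $L^2_+$ for $0\leq s/2\leq r$ because $L_u$ is a bounded perturbation of $D$, and $u(t)\in H^r_+\subseteq H^{s/2}_+$ since $s\leq 2r$; so $\mathcal{H}_s(u(t))=\|(L_u+\lambda)^{s/2}u(t)\|_{L^2}^2<\infty$.

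The main obstacle — really the only point requiring care — is justifying that one may conjugate the fractional power: $U(t)^*f(L_{u(t)})U(t)=f(L_{u_0})$ for $f(x)=(x+\lambda)^s$. For bounded continuous $f$ this is immediate from the identity on resolvents, but $(x+\lambda)^s$ is unbounded on the spectrum of $L_{u_0}$ (which is discrete but unbounded above). I would handle this by approximation: since $L_{u_0}$ has discrete spectrum with an orthonormal eigenbasis (Proposition~\ref{Lu self-adjoint}), and $U(t)^*L_{u(t)}U(t)=L_{u_0}$ forces $U(t)$ to map the eigenbasis of $L_{u_0}$ to that of $L_{u(t)}$ with the same eigenvalues, the conjugation identity holds eigenvector by eigenvector, hence on the dense span of eigenvectors, and extends to the form domain $H^{s/2}_+$ by the closedness of $(L_{u_0}+\lambda)^{s/2}$. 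Alternatively, one differentiates $t\mapsto\mathcal{H}_s(u(t))$ directly using $\tfrac{d}{dt}L_u=[B_u,L_u]$ and $\partial_t u=B_uu-iL_u^2u$ (Lemma~\ref{Reformulation de CS à l'aide des op. de Lax}), checking that the commutator terms telescope to zero; but the conjugation argument is cleaner and avoids differentiating a fractional power of an operator-valued path.
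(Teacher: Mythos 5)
Your proposal is correct and follows essentially the same route as the paper: conjugating by the unitary $U(t)$ from \eqref{Cauchy problem unitary operator}, using $U(t)^*L_{u(t)}U(t)=L_{u_0}$ and $U(t)^*u(t)=\eee^{-itL_{u_0}^2}u_0$, and concluding since $(L_{u_0}+\la)^s$ commutes with $\eee^{-itL_{u_0}^2}$. Your extra care in justifying the conjugation of the unbounded fractional power (via the eigenbasis / functional calculus) is a point the paper leaves implicit, but it is the same argument.
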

	
	\begin{Rq}\label{extension conservation laws}\hfill
 \begin{itemize}
     \item Using complex interpolation method \cite[Chapter I. 4.]{Ta81}, one can observe as demonstrate in Proposition~\ref{controle loi de conservation}\,, that the  $\mathcal{H}_s(u)\leq C\|u\|_{H^\frac{s}{2}}^2\,$.
     \item The condition $r>\frac32$ is to guarantee the existence of $u(t)$\,. It can be omitted once we prove in Section~\ref{Proof of th 3 and 4} that the flow $u_0\in\B{1}\cap H^r_+(\T)\mapsto u(t)\in H^r_+(\T)$ exists for all $r\geq0$.
 \end{itemize}
	\end{Rq}
	\begin{proof}
		Given $u\in \mathcal C \big([-T,T],H^{r}_+(\T)\big)\,,$ $r>\frac32\,,$ solution of \eqref{CS+}\,, we consider the unitary operator $U(t)$ defined in \eqref{Cauchy problem unitary operator}\,. Then, by \eqref{U(t)*1}\,, we know that $ U(t)^*u(t)=\eee^{-itL^2_{u_0}}u_0\,.$
		And, since $L_u$ is a self--adjoint operator by Proposition~\ref{Lu self-adjoint}\,, we infer by \eqref{U(t)*LuU(t)=Lu0}\,,
		$$
		U(t)^*(L_{u(t)}+\la)^sU(t)=(L_{u_0}+\la)^s\,.
		$$     
		Therefore, for all $0\leq s\leq 2r\,,$
		\begin{align*}
			\mathcal H_s(u(t))
			=&\,\ps{U(t)^*(L_{u(t)}+\la)^s{u(t)}}{U(t)^*{u(t)}}
			\\
			=&\,\ps{(L_{u_0}+\la)^sU(t)^*{u(t)}}{U(t)^*u(t)}
			\\
			=&\,\langle (L_{u_0}+\la)^s \eee^{-itL^2_{u_0}}u_0\mid \eee^{-itL^2_{u_0}}u_0\rangle
		\end{align*}
		As a consequence, $\mathcal{H}_s(u(t))=\mathcal{H}_s(u_0)$ as $(L_{u_0}+\la)^s$ and $\eee^{-itL^2_{u_0}}$ commute.
		
	\end{proof}

	\begin{Rq} Using the identity $U(t)^*1=\eee^{-itL^2_{u_0}}1$ of \eqref{U(t)*1} and repeating the same proof of Lemma~\ref{loi de conservation}\,, one can also deduce for $\la>\!>0\,,$ that the quantities $\ps{(L_u+\la)^q1}{u}$ and $\ps{(L_u+\la)^p1}{1}$ are conserved by the flow. Another way to show this, is to observe by definition of $L_u=D-T_uT_{\bar u}$  we have $L_u1=-\ps{1}{u} u$
		and the average $\ps{u}{1}$ is conserved  along the evolution, since
		\[
		\p_t\ps{u}{1}=i\ps{\p_x^2 u}{1}+2i\ps{D\Pi(|u|^2)}{\overline{u}}=0\,.
		\]
	\end{Rq}
	\vskip0.2cm
	To prove the energy estimates and for future requests, we  need the following lemma.
	
	\begin{lemma}\label{inegality Pi(u bar h)}
		Let $h\in H^{\frac12}_{+}(\T)\,$, $u\in\Ltwo\,,$
		\be\label{inegalite-Tu-bar-h}
		\Ldeux{T_{\bar{u}}h}^2\leq \left(\ps{Dh}{h}+\Ldeux{h}^2\right)\Ldeux{u}^2\,.
		\ee
	\end{lemma}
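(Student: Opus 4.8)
The plan is to expand $\|T_{\bar u}h\|_{L^2}^2$ using Fourier coefficients and then bound the resulting double sum. Write $u=\sum_{n\ge 0}\widehat u(n)\eee^{inx}$ and $h=\sum_{k\ge 0}\widehat h(k)\eee^{ikx}$. Since $T_{\bar u}h=\Pi(\bar u h)=\sum_{m\ge 0}\big(\sum_{k\ge m}\overline{\widehat u(k-m)}\,\widehat h(k)\big)\eee^{imx}$, Parseval gives
\[
\|T_{\bar u}h\|_{L^2}^2=\sum_{m\ge 0}\Big|\sum_{k\ge m}\overline{\widehat u(k-m)}\,\widehat h(k)\Big|^2 .
\]
First I would apply Cauchy--Schwarz to the inner sum, but with a carefully chosen weight: split $\overline{\widehat u(k-m)}\,\widehat h(k)=\big(\overline{\widehat u(k-m)}\,\langle k\rangle^{-1/2}\big)\big(\langle k\rangle^{1/2}\widehat h(k)\big)$ where $\langle k\rangle=(1+k^2)^{1/2}$ — or, more in the spirit of the statement, use the weight $\sqrt{k+1}$ so that $\sum_{k\ge m}(k+1)|\widehat h(k)|^2 \le \langle Dh\mid h\rangle+\|h\|_{L^2}^2$ exactly. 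The key arithmetic input is then that $\sum_{k\ge m}\frac{|\widehat u(k-m)|^2}{k+1}\le \sum_{k\ge m}\frac{|\widehat u(k-m)|^2}{k-m+1}$... which is the wrong direction, so instead one keeps $\sum_{k\ge m}\frac{1}{k+1}|\widehat u(k-m)|^2$ and sums over $m$.

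So the core step: by Cauchy--Schwarz,
\[
\Big|\sum_{k\ge m}\overline{\widehat u(k-m)}\,\widehat h(k)\Big|^2
\le \Big(\sum_{k\ge m}\frac{|\widehat u(k-m)|^2}{k+1}\Big)\Big(\sum_{k\ge m}(k+1)|\widehat h(k)|^2\Big).
\]
Summing in $m\ge 0$ and exchanging the order of summation in the first factor,
\[
\|T_{\bar u}h\|_{L^2}^2\le \sum_{k\ge 0}(k+1)|\widehat h(k)|^2\sum_{m=0}^{k}\frac{|\widehat u(k-m)|^2}{k+1}
=\sum_{k\ge 0}|\widehat h(k)|^2\sum_{j=0}^{k}|\widehat u(j)|^2
\le \|u\|_{L^2}^2\sum_{k\ge 0}(k+1)\,\frac{|\widehat h(k)|^2}{?}
\]
— wait, this bounds it by $\|u\|_{L^2}^2\sum_k|\widehat h(k)|^2=\|u\|_{L^2}^2\|h\|_{L^2}^2$ only if the weight $(k+1)$ cancels, which it doesn't. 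The honest route is to keep the weight on $h$: after exchanging sums one gets $\sum_{k\ge 0}(k+1)|\widehat h(k)|^2\cdot\frac{1}{k+1}\sum_{j=0}^k|\widehat u(j)|^2\le \|u\|_{L^2}^2\sum_{k\ge 0}|\widehat h(k)|^2$, which gives $\|T_{\bar u}h\|_{L^2}^2\le\|u\|_{L^2}^2\|h\|_{L^2}^2$ — too weak, and not what is claimed. The correct pairing must put $k+1$ with $h$ on the final line, so one should Cauchy--Schwarz with the weight reversed: write the inner product as $\sum_{k\ge m}\big(\overline{\widehat u(k-m)}\sqrt{k+1}\big)\big(\frac{\widehat h(k)}{\sqrt{k+1}}\big)$? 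That makes the $h$-factor $\sum\frac{|\widehat h(k)|^2}{k+1}\le\|h\|^2$ but introduces $(k+1)|\widehat u(k-m)|^2$ which, summed over $m$, is not controlled by $\|u\|_{L^2}^2$.

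The resolution — and the step I expect to be the main obstacle — is that the sharp form genuinely requires exchanging sums \emph{before} applying Cauchy--Schwarz, i.e. one should not estimate $T_{\bar u}h$ coefficient-by-coefficient but rather observe $\|T_{\bar u}h\|_{L^2}\le\|\Pi(\bar u h)\|_{L^2}\le\|\bar u h\|_{L^2}$ is false in the needed quantitative sense and instead use the operator identity: $\langle T_{\bar u}h\mid T_{\bar u}h\rangle=\langle T_u T_{\bar u}h\mid h\rangle=\langle (D-L_u)h\mid h\rangle$, and then bound $\langle L_u h\mid h\rangle\ge -\|u\|_{L^\infty}^2\|h\|^2$... but $u$ is only $L^2$. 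The clean argument is the Fourier one done in the right order: expand $\|T_{\bar u}h\|^2=\sum_{m}|\sum_{j\ge 0}\overline{\widehat u(j)}\widehat h(j+m)|^2$, Cauchy--Schwarz in $j$ against the measure $|\widehat u(j)|^2$ (so the $u$-factor is $\|u\|_{L^2}^2$ and the $h$-factor is $\sum_j|\widehat u(j)|^2|\widehat h(j+m)|^2/\|u\|_{L^2}^2$), giving $\|T_{\bar u}h\|^2\le\|u\|_{L^2}^2\sum_m\sum_j\frac{|\widehat u(j)|^2}{\|u\|^2}|\widehat h(j+m)|^2$ — but summing over $m$ and $j$ recovers $\|h\|^2$ only. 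To get the derivative gain one Cauchy--Schwarzes instead with weight $(j+1)^{1/2}$: $|\sum_j\overline{\widehat u(j)}\widehat h(j+m)|^2\le(\sum_j\frac{|\widehat u(j)|^2}{j+m+1}\cdot(j+m+1))\cdots$. I would carry this out by Cauchy--Schwarz as $\big|\sum_j \overline{\widehat u(j)}\,\widehat h(j+m)\big|^2 \le \big(\sum_j|\widehat u(j)|^2\big)\big(\sum_j|\widehat h(j+m)|^2\big)$ on one hand, giving the $m$-uniform bound, and note $\sum_m\sum_j|\widehat h(j+m)|^2=\sum_{k\ge 0}(k+1)|\widehat h(k)|^2=\langle Dh\mid h\rangle+\|h\|_{L^2}^2$ since each $k$ is hit by the pairs $(j,m)$ with $j+m=k$, $j,m\ge 0$, of which there are exactly $k+1$. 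Thus
\[
\|T_{\bar u}h\|_{L^2}^2=\sum_{m\ge 0}\Big|\sum_{j\ge 0}\overline{\widehat u(j)}\,\widehat h(j+m)\Big|^2\le \|u\|_{L^2}^2\sum_{m\ge 0}\sum_{j\ge 0}|\widehat h(j+m)|^2=\|u\|_{L^2}^2\big(\langle Dh\mid h\rangle+\|h\|_{L^2}^2\big),
\]
which is exactly \eqref{inegalite-Tu-bar-h}. The only subtlety to check carefully is the counting identity $\#\{(j,m)\in\Nzero^2:j+m=k\}=k+1$ and the legitimacy of interchanging the (absolutely convergent, nonnegative) double sum, which is immediate. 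Equality holds iff for each $m$ the vectors $(\widehat u(j))_j$ and $(\widehat h(j+m))_j$ are proportional, e.g. for the geometric profiles $u_0,h$ with $\widehat u(j)=cq^j$, which matches the remark following Theorem~\ref{GWP k geq 4}.
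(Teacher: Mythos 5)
Your final displayed chain of estimates is exactly the paper's proof: expand $\widehat{T_{\bar u}h}(m)=\sum_{j\ge0}\overline{\widehat u(j)}\,\widehat h(j+m)$, apply plain (unweighted) Cauchy--Schwarz in $j$, then interchange the sums and use the counting identity $\#\{(j,m)\in\N_{\geq0}^2: j+m=k\}=k+1$ to recover $\ps{Dh}{h}+\Ldeux{h}^2$. Despite the exploratory false starts with weighted Cauchy--Schwarz, the argument you settle on is correct, complete, and essentially identical to the paper's (including the characterization of equality via proportionality, matching the geometric-profile remark).
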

	
	\begin{proof}
		By Parseval's identity,  
		$$ 
		\Ldeux{T_{\bar{u}}h}^2=\sum_{n\geq0}\big\lvert\fr{T_{\bar{u}}h}(n)\big\rvert^2\,,
		$$
		where 
		\be\label{Fourier coefficient of Pi(u bar h)}
		\fr{T_{\bar{u}}h}(n)=\fr{\Pi(h\bar u)}(n)=\sum_{p\geq0}\fr{h}(n+p)\overline{\fr{u}(p)}\,.
		\ee
		Applying Cauchy--Schwarz's inequality, we infer
		$$ 
		\Ldeux{T_{\bar{u}}h}^2
		\leq
		\Ldeux{u}^2\,
		\sum_{p\geq0}\sum_{n\geq0}\va{\fr{h}(n+p)}^2\,.
		$$
		Set $k=n+p$, then
		$$
		\sum_{p\geq0}\sum_{n\geq0}\va{\fr{h}(n+p)}^2=\sum_{k\geq 0}(k+1)\va{\fr{h}(k)}^2= \ps{Dh}{h}+\Ldeux{h}^2\,.
		$$
	\end{proof}
	
	\begin{Rq}\label{Hankel op}
		\hfill
		\begin{enumerate}
			\item Recall that the embedding $H^{\frac12}(\T)\hookrightarrow L^{\infty}(\T)$ fails to be true. However, taking the potential $u$ as an element of the Hardy space $L^2_+(\T)$\,, improved the estimate from $
			\|T_{\bar u}h\|_{L^2}\leq  \|h\|_{L^\infty} \|u\|_{L^2}
			$ to \eqref{inegalite-Tu-bar-h}\,.
			\vskip0.15cm
			\item  \label{Hankel op point 2} From \eqref{Fourier coefficient of Pi(u bar h)}, one could see that, for all $h\in H^\frac12_+(\T)\,,$ the Hilbert--Schmidt norm of the antilinear operator $u\in\Ltwo\mapsto T_{\bar u}h$ is given by
			$$
			\hskip1.5cm \|\Pi(\overline{\,\cdot}\,h)\|_{HS}^2
			=\sum_{p\geq0}
			\sum_{n\geq0}\va{\fr{h}(n+p)}^2=\sum_{k\geq 0}(k+1)\va{\fr{h}(k)}^2= \ps{Dh}{h}+\Ldeux{h}^2\,.
			$$ 
			In particular, we have $u\mapsto T_{\bar u}h$ is a compact antilinear operator in $\Ltwo\,.$ 
			\vskip0.1cm
			\item The inequality~\eqref{inegalite-Tu-bar-h} of Lemma~\ref{inegality Pi(u bar h)} is a sharp inequality since its proof relies on a simple application of the Cauchy--Schwarz inequality.
			In particular, when $h=u$\,, inequality~\eqref{inegalite-Tu-bar-h} is an equality, if and only if 
			$$
			\qquad u(z)=\frac{c}{1-qz}\,, \qquad \va{q}<1\,, \quad c\in\C
			$$
   --which corresponds to the profile of a traveling wave of \eqref{CS+} when $c=\sqrt{1-\va{q}^2}\,$ \cite{Ba23}\,.
			Indeed, following arguments used in \cite[Lemma~1]{GG08}, one observe that  the Cauchy--Schwarz inequality applied to \eqref{Fourier coefficient of Pi(u bar h)}\ is an equality, if and only if  for all $n\geq 0\,,$ there exists $c_n\in\C$ such that
			\be\label{fr u(n+p) fr u(p)}
			\qquad \fr{u}(n+p)=c_n\,\fr{u}(p)\,, \qquad \qlq p\geq 0\,.
			\ee
			Hence, if $\fr{u}(1)\neq 0$ and $\fr{u}(0)\neq0$\,,
			$$
			\qquad c_n=\frac{\fr{u}(n+1)}{\fr{u}(1)}=\frac{\fr{u}(n)}{\fr{u}(0)}\;,
			$$
			leading to,
			$$
			\qquad \fr{u}(n)=\left(\frac{\fr{u}(1)}{\fr{u}(0)}\right)^n\,\fr{u}(0)\,,\qquad \qlq n\in\N\,.
			$$
			Therefore, the sequence $(\widehat{u}(n))$ is  a geometric progression with common ratio $q:=\frac{\fr{u}(1)}{\fr{u}(0)}\,,$ where $0<\va{q}<1$ since 
			$\sum_{n=0}^\infty \lvert\widehat{u}(n)\rvert^2<+\infty$\,.
			Hence,
			$$
			\qquad u(z)=\sum_{n=0}^\infty \widehat{u}(n)z^n=\frac{\widehat{u}(0)}{1-qz}\,.
			$$
			Now, if $\fr{u}(1)=0$ or $\fr{u}(0)=0$ then by \eqref{fr u(n+p) fr u(p)} we infer for $p=0$ or $1$\,, $\, u=\fr{u}(0)\,.$
		\end{enumerate}
		
	\end{Rq}
	
	We recall that $\mathcal{B}_{L^2_+}(r)$ denotes the open ball of $\Ltwo$ centered at the origin, with radius $r>0$\,. And we denote for any $s\in\R$ by $\lfloor s\rfloor:=\max\{k\in\Z\,;\, k< s\}$\,.
	
	\begin{prop}\label{controle loi de conservation}
		Let $u_0\in \B{1}\cap H^r_+(\T)\,,$ $r>\frac32\,.$ Then, for all $\la>\!>0$\,,  there exists  $\,C=C(\|u_0\|_{H^{\frac{\lfloor 2r\rfloor}{2}}}\,,\lambda)>0$ independent of $t$\,, such that for every 
  $f\in H^s_+(\T)\,,$ 
		\be\label{inegalite pour tout s}
		\frac{1}{C}\,\|f\|_{H^s}\leq \|(L_{u(t)}+\la)^sf\|_{L^2}\leq C\|f\|_{H^s}\,, \qquad \qlq\, 0\leq s\leq \frac{\lfloor 2r\rfloor +1}{2}\,.
		\ee
	\end{prop}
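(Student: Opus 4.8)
The plan is to prove the two-sided bound $\tfrac1C\|f\|_{H^s}\le \|(L_{u(t)}+\la)^s f\|_{L^2}\le C\|f\|_{H^s}$ first for the integer/half-integer endpoint $s_0 := \tfrac{\lfloor 2r\rfloor+1}{2}$, where direct operator manipulations are available, and then for all intermediate $0\le s\le s_0$ by complex interpolation; finally I would remove the $t$-dependence of the constant using the conservation laws of Lemma~\ref{loi de conservation}. The first reduction is to observe that by Proposition~\ref{Lu self-adjoint} the operator $L_u$ is self-adjoint and bounded from below, so fixing $\la$ large enough (larger than $-\lambda_0(u_0)$, say; note the eigenvalues are conserved by Remark~\ref{val propres conservées}, so the choice of $\la$ can be made uniform in $t$) makes $L_u+\la$ a positive self-adjoint operator. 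Thus $(L_u+\la)^s$ is well-defined by the spectral theorem for every $s\ge 0$ and it suffices to compare it with $\langle D\rangle^s$ on the relevant scale.

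The heart of the matter is the endpoint estimate. Writing $L_u+\la = \langle D\rangle + (\la - T_uT_{\bar u}) + (D - \langle D\rangle)$, I would show by induction on $k$ that for each nonnegative integer $k$ the operator $(L_u+\la)^k - \langle D\rangle^k$ is bounded from $H^{k-\frac12}_+(\T)$ to $L^2_+(\T)$, with operator norm controlled by a polynomial in $\|u_0\|_{H^{\lfloor 2r\rfloor/2}}$. The base case $k=1$ is exactly Lemma~\ref{inegality Pi(u bar h)}: it gives $\|T_{\bar u}h\|_{L^2}^2\le(\langle Dh\mid h\rangle+\|h\|_{L^2}^2)\|u\|_{L^2}^2$, hence $\|T_uT_{\bar u}h\|_{L^2}\le \|u\|_{L^2}^2\,\|h\|_{H^{1/2}}$ (using $\|T_u g\|_{L^2}\le\|u\|_{L^2}\|g\|_{L^\infty}$? — no: rather estimate $\|T_uT_{\bar u}h\|_{L^2}\le \|u\|_{L^2}\|T_{\bar u}h\|_{L^2}$ via the Hilbert--Schmidt/antilinear bound of Remark~\ref{Hankel op}), so that $L_u+\la - \langle D\rangle$ maps $H^{1/2}_+$ to $L^2_+$. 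For the inductive step, expand $(L_u+\la)^{k}=(L_u+\la)(L_u+\la)^{k-1}$, substitute the inductive decomposition, and control each Toeplitz-commutator term that arises — the gain of a half-derivative at each stage comes precisely because $T_{\bar u}\colon H^{m+1/2}_+\to H^m_+$ is bounded for $u$ in a high enough Sobolev space (this uses $H^{r-1}_+(\T)\hookrightarrow L^\infty(\T)$ for $r>\tfrac32$ to bound commutators $[D,T_u]=-iT_{\partial_x u}$, etc.). Once $(L_u+\la)^{s_0}=\langle D\rangle^{s_0}+R$ with $R\colon H^{s_0-1/2}_+\to L^2_+$ bounded, the upper bound $\|(L_u+\la)^{s_0}f\|_{L^2}\le C\|f\|_{H^{s_0}}$ is immediate; the lower bound follows by writing $\langle D\rangle^{s_0}f = (L_u+\la)^{s_0}f - Rf$ and absorbing, after an interpolation step that trades the $H^{s_0-1/2}$ norm in $Rf$ against $\eps\|f\|_{H^{s_0}}+C_\eps\|f\|_{L^2}$, and noting $\|f\|_{L^2}\le\|(L_u+\la)^{s_0}f\|_{L^2}\cdot(\text{const})$ since $(L_u+\la)^{-s_0}$ is bounded on $L^2$.

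With the endpoint in hand, the range $0\le s\le s_0$ is handled by complex interpolation as indicated in Remark~\ref{extension conservation laws}: the analytic family $z\mapsto (L_{u(t)}+\la)^{s_0 z}\langle D\rangle^{-s_0 z}$ is bounded on $L^2_+(\T)$ for $\mathrm{Re}\,z=0$ (unitary up to the $\langle D\rangle^{-is_0\mathrm{Im}\,z}$ factor, which is an isometry) and bounded $H^{s_0}_+\to L^2_+$ for $\mathrm{Re}\,z=1$ by the endpoint estimate, hence bounded $H^{\theta s_0}_+\to L^2_+$ for $\mathrm{Re}\,z=\theta$, which is the upper bound in \eqref{inegalite pour tout s}; the lower bound comes from applying the same argument to the inverse family. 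Finally, to see that $C$ does not depend on $t$: the bound produced above involves $u=u(t)$ only through $\|u(t)\|_{H^{\lfloor 2r\rfloor/2}}$ and through the spectral gap (choice of $\la$); the latter is $t$-independent by isospectrality (Remark~\ref{val propres conservées}), and the former is controlled uniformly in $t$ by the conserved quantities $\mathcal H_s(u)=\langle(L_u+\la)^s u\mid u\rangle$ of Lemma~\ref{loi de conservation} — indeed $\|u(t)\|_{H^{\lfloor 2r\rfloor/2}}^2\le C\,\mathcal H_{\lfloor 2r\rfloor}(u(t))=C\,\mathcal H_{\lfloor 2r\rfloor}(u_0)$ by the upper bound of \eqref{inegalite pour tout s} applied at the (conserved) value $s=\lfloor 2r\rfloor/2\le s_0$, which closes the loop since that instance of the estimate only needs the $t$-dependent constant and is then bootstrapped. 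I expect the main obstacle to be the bookkeeping in the inductive half-derivative-gain step — making sure that every Toeplitz product and commutator appearing in $(L_u+\la)^k-\langle D\rangle^k$ genuinely loses only $1/2$ derivative and is bounded by the asserted power of the low Sobolev norm of $u_0$, rather than a higher norm.
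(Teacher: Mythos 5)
There is a genuine gap, and it sits exactly where the paper has to work hardest: the $t$-independence of the constant and the role of the hypothesis $u_0\in\B{1}$. Your remainder bound for $R=(L_{u(t)}+\la)^{s_0}-\langle D\rangle^{s_0}$ is necessarily controlled by Sobolev norms of $u(t)$, not of $u_0$ (already the base case needs $\|T_{u(t)}T_{\bar u(t)}h\|_{L^2}\lesssim \|u(t)\|_{L^\infty}\|u(t)\|_{L^2}\|h\|_{H^{1/2}}$; note that your proposed substitute $\|T_uT_{\bar u}h\|_{L^2}\le\|u\|_{L^2}\|T_{\bar u}h\|_{L^2}$ is false --- a Toeplitz operator with merely $L^2$ symbol is not bounded on $L^2$, as normalized Dirichlet kernels show --- and Remark~\ref{Hankel op} only restates Lemma~\ref{inegality Pi(u bar h)}, i.e.\ a bound on $T_{\bar u}$ from $H^{1/2}$ to $L^2$, not on $T_u$ on $L^2$). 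So your constant at the endpoint depends on $\sup_t\|u(t)\|_{H^{\lfloor 2r\rfloor/2}}$, and your ``closing the loop'' step tries to recover this uniform bound from $\mathcal H_{\lfloor 2r\rfloor}(u(t))=\mathcal H_{\lfloor 2r\rfloor}(u_0)$ by invoking the lower bound of \eqref{inegalite pour tout s} at level $\lfloor 2r\rfloor/2$ ``with the $t$-dependent constant''. That is circular: a $t$-dependent constant only yields a $t$-dependent bound on $\|u(t)\|_{H^{\lfloor 2r\rfloor/2}}$, and there is no smallness that would make a continuity/bootstrap argument close. The paper resolves precisely this by an induction in half-derivative steps: the norm equivalence at level $s$ is proved with a constant depending only on $\|u(t)\|$ at level $s-\tfrac12$, which has already been made uniform in $t$ at the previous step via the conserved quantities (Steps 1--5 of the paper's proof). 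Your plan of jumping directly to $s_0$ and repairing uniformity afterwards cannot be salvaged without reinstating that interleaved induction.

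Relatedly, your argument never uses $\|u_0\|_{L^2}<1$, yet this hypothesis is indispensable in the focusing case: the first rung of any such bootstrap is the coercivity of $\ps{(L_{u(t)}+\la)f}{f}=\ps{Df}{f}-\|T_{\bar u}f\|_{L^2}^2+\la\|f\|_{L^2}^2$, and since Lemma~\ref{inegality Pi(u bar h)} is sharp (equality for the traveling-wave profiles of unit $L^2$ norm), the coefficient $1-\|u(t)\|_{L^2}^2=1-\|u_0\|_{L^2}^2$ must be positive to obtain any $t$-uniform control of $\|u(t)\|_{H^{1/2}}$ from $\mathcal H_1$; a proof in which this hypothesis plays no role cannot be correct as stated. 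Two further, more minor, points: the endpoint $s_0=\frac{\lfloor 2r\rfloor+1}{2}$ is in general a half-integer, so the integer-power expansion $(L_u+\la)^k=(L_u+\la)(L_u+\la)^{k-1}$ does not reach it directly --- the paper handles the half-integer levels through the quadratic forms $\ps{(L_u+\la)^{2s}f}{f}$ (e.g.\ $s=\tfrac12,\tfrac32$) rather than an operator expansion; and your complex-interpolation step for the intermediate $s$ and the lower bound via the inverse family is fine and is indeed how the paper proceeds between the half-integer nodes.
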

	
	\begin{Rq}\label{r>3/2}
		The condition $r>\frac32$ is to guarantee the existence of $u(t)$\,. It can be replaced by $r\geq 0$ once we prove in Section~\ref{Proof of th 3 and 4} that the flow $u_0\in\B{1}\cap H^r_+(\T)\mapsto u(t)\in H^r_+(\T)$ exists for all $r\geq0$.
	\end{Rq}
	
	\begin{proof}
		The proof is done by induction on every interval of length $1/2$\,.
		\vskip0.1cm
		\noindent
		\underline{\textbf{Step 1 :} $\,s\in[0,\frac12]\,.$} Let $s=\frac12$\,, we have by definition of $L_u=D-T_uT_{\bar u}\,,$
		\begin{align}\label{control H12 de la solution}
			 \ps{(L_u+\la)f}{f}
			=&\,\ps{Df}{f}-\|T_{\bar u} f\|_{L^2}^2 +\la\|f\|_{L^2}^2\;.
		\end{align}
		Applying the sharp inequality of Lemma~\ref{inegality Pi(u bar h)}\,,
		\begin{align}\label{inequality leading to assumption on the initail data}
			\|(L_{u}+\la)^\frac12 f\|_{L^2}^2
			\geq (1-\|u\|_{L^2}^2)\ps{Df}{f}
			+(\la-\|u\|_{L^2}^2)\|f\|_{L^2}^2\,.
		\end{align}
		Thus, we infer since $\|u\|_{L^2}=\|u_0\|_{L^2}<1\,,$
		\[
		\|(L_{u(t)}+\la)^\frac12 f\|_{L^2}^2\geq \frac{1}{C^2}\|f\|_{H^\frac12}^2\,,
		\]
		where $C=C(\|u_0\|_{L^2}\,, \, \lambda)>0$ is a positive constant independent of $t\,.$ On the other hand, using the definition of $L_u$\,, it is easy to see that 
		$$
		\|(L_{u(t)}+\la)^\frac12f\|_{L^2}\leq C\|f\|_{H^\frac12}\,.
		$$
		Therefore, by complex interpolation \cite[Chapter I. 4]{Ta81}\,, we deduce that inequality~\eqref{inegalite pour tout s} holds true for all $s\in[0,\frac12]\,.$
		\vskip0.1cm
		\noindent
		\underline{\textbf{Step 2 :} Uniform bounds on $\|u(t)\|_{L^p}\,,$ $p\in[2,\infty)$\,. } 
		Applying step 1, and using the conservation laws of Lemma~\ref{loi de conservation}\,, we infer for $f=u\,,$
		\begin{align*}
			\frac{1}{C}\,\|u(t)\|_{H^s}
			\leq \|(L_{u(t)}+\la)^su(t)\|_{L^2}
			=\|(L_{u_0}+\la)^su_0\|_{L^2}\leq C\|u_0\|_{H^s}\,,
		\end{align*}
		for all $s\in[0,\frac12]\,.$ Therefore,
		$
		\sup_{t\in\R}\|u(t)\|_{H^\frac12}\lesssim \|u_0\|_{H^\frac12}\,,
		$
		and thus by Sobolev embedding
		\[
		\sup_{t\in\R}\|u(t)\|_{L^p}\lesssim \|u_0\|_{H^\frac12}\,, \qquad \qlq p\in[2,\infty)\,.
		\]
		\vskip0.1cm
		\noindent
		\underline{\textbf{Step 3 :}  $\,s\in[\frac12,1]\,.$ } As in Step~1, the idea is to prove that it is true for $s=1$ and then by complex interpolation, infer that it is true for all $s\in[\frac12,1]\,.$ Let $s=1$\,, by \eqref{inequality leading to assumption on the initail data} of Step 1,
		\begin{align*}
			\Ldeux{(L_{u(t)}+\la)f}^2
			=&\,\ps{(L_{u(t)}+\lambda)f}{L_{u(t)}f}+\la\ps{(L_{u(t)}+\la)f}{f}
			\\
			\geq&\,
			\|L_{u(t)}f\|_{L^2}^2
			+\la\Big( (1-\|u\|_{L^2}^2)\|f\|_{\dot{H}^\frac12}^2
			+(\la-\|u\|_{L^2}^2)\|f\|_{L^2}^2\Big)
			\\
			&\hskip2cm
			+\la\Big((1-\|u\|_{L^2}^2)\|f\|_{\dot{H}^\frac12}^2
			-\|u\|_{L^2}^2\|f\|_{L^2}^2\Big)
			\\
			=&\,\|Df\|_{L^2}^2+\|uT_{\bar u}f\|_{L^2}^2-2 \mrm{Re}\ps{Df}{uT_{\bar u}f}
			\\
			&\,\hskip1.45cm
			+2\la(1-\|u\|_{L^2}^2)\|f\|_{\dot{H}^\frac12}^2
			+\la(\la-2\|u\|_{L^2}^2)\|f\|_{L^2}^2\,.
		\end{align*}
		Using Young's inequality, we deduce
		\begin{align}\label{inequality of step 3}
			\Ldeux{(L_{u(t)}+\la)f}^2
			\geq&\,
			(1-\eps)\|Df\|_{L^2}^2+(1-C_\eps)\|uT_{\bar u}f\|_{L^2}^2
			\\
			&\,
			+2\la(1-\|u\|_{L^2}^2)\|f\|_{\dot{H}^\frac12}^2
			+\la(\la-2\|u\|_{L^2}^2)\|f\|_{L^2}^2\,.\notag
		\end{align}
		Now, applying Cauchy--Schwarz's inequality on $\|uT_{\bar u}f\|_{L^2}$ and since $\|u\|_{L^8}$ and $\|u\|_{L^4}$ are uniformly bounded by Step 2., we infer 
		\[
		\Ldeux{(L_{u(t)}+\la)f}^2\geq \frac{1}{C^2}\,\|f\|_{H^1}^2\;, \qquad C=C(\|u_0\|_{H^\frac12}, \lambda)\,.   
		\]
		On the other hand, we have by definition of $L_u$\,,
		\[
		\Ldeux{(L_{u(t)}+\la)f}\leq C\|f\|_{H^1}\,. 
		\]
  Therefore, inequality \eqref{inegalite pour tout s} holds for $s=1$ and thus by complex interpolation \cite{Ta81}, it holds for all $s\in[\frac12,1]\,.$
  \vskip0.1cm
  \noindent
  \underline{\textbf{Step 4 :} Uniform bounds on $\|u(t)\|_{H^1}$ }  Since \eqref{inegalite pour tout s} holds for $s=1$ then we infer by repeating the same proof of Step~2 that 
  \[
    \sup_{t\in\R}\|u(t)\|_{H^1}<\infty\,.
  \]
   \vskip0.1cm
  \noindent
  \underline{\textbf{Step 5 :} $s\in[1,\frac32]$ } Again, we prove that inequality~\eqref{inegalite pour tout s} is true for $s=\frac32\,,$ then by complex interpolation, we deduce it for all $s\in[1,\frac32]\,.$
Let $s=\frac32\,.$ By applying inequality \eqref{inequality of step 3} of Step~3\,,
\begin{align*}
    \ps{(L_{u(t)}+\lambda)^3f}{f}
		=&\,\ps{(L_{u(t)}+\lambda)^2f}{L_{u(t)}f}\,+\,\la\ps{(L_{u(t)}+\la)^2f}{f}
			\\
        \geq &\,\ps{(D-uT_{\bar u}+\lambda)^2f}{(D-uT_{\bar u})f}+ \lambda\Big[
        (1-\eps)\|Df\|_{L^2}^2
			\\
			&\,+(1-C_\eps)\|uT_{\bar u}f\|_{L^2}^2
			+2\la(1-\|u\|_{L^2}^2)\|f\|_{\dot{H}^\frac12}^2
			+\la(\la-2\|u\|_{L^2}^2)\|f\|_{L^2}^2\Big]
\end{align*}
Now, expanding the first inner product on the right--hand side, and using the Cauchy--Schwarz inequality, Young's inequality, and Sobolev embedding\,, we obtain
\begin{align*}
    \ps{(L_{u(t)}+\lambda)^3f}{f}\gtrsim&\, \ps{D^3f}{f}\,+p_1( \|u\|_{H^1}^2, \lambda)\ps{D^2f}{f}
    \\
    &\,+p_2( \|u\|_{H^1}^2, \lambda)\ps{Df}{f}
    \,+\,p_3(\|u\|_{H^1}^2, \lambda)\|f\|_{L^2}^2
\end{align*}
where for all $j=1,2,3,$ $\,p_j(\|u\|_{H^1}^2, \lambda)$ is a positive polynomial for $\lambda>\!>0\,.$ Therefore,
there exists $C=C(\|u_0\|_{H^1}, \lambda)>0$ such that 
\[
		\Ldeux{(L_{u(t)}+\la)^{\frac32}f}\geq \,\frac{1}{C}\,\|f\|_{H^\frac32}\,.
		\]
Finally, by repeating the same previous procedure, 
  we infer that  inequality \eqref{inegalite pour tout s} holds true for all $s\geq 0$\,.
	\end{proof} 
	
	\begin{theorem*}\ref{GWP k geq 4}\textbf{.}
		For all $r>\frac32\,,$ the Calogero--Sutherland DNLS focusing equation \eqref{CS+} is globally well--posed in 
		$H^r_+(\T)\cap \mathcal{B}_{L^2_+}(1)$\,.
		Moreover,  the following a--priori bound holds,
		\[
		\sup_{t\in\R}\|u(t)\|_{H^r}\;\leq C\;,
		\]
		where $C=C(\|u_0\|_{H^r})>0$ is a positive constant.
	\end{theorem*}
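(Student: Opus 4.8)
The plan is to combine the local well–posedness already available on $\T$ for $s>\frac32$ with the a priori control of Sobolev norms furnished by the conservation laws. Local well–posedness (iterative schemes of Kato's type, as recalled in the introduction) yields, for every $u_0\in H^r_+(\T)$ with $r>\frac32$, a maximal solution $u\in\mathcal C(\mathrm I_{\max};H^r_+(\T))$, with the blow–up alternative: if $T^*:=\sup\mathrm I_{\max}<\infty$ then $\|u(t)\|_{H^r}\to\infty$ as $t\uparrow T^*$ (and symmetrically at the left endpoint). So it suffices to prove an a priori bound $\sup_{t\in\mathrm I_{\max}}\|u(t)\|_{H^r}\le C(\|u_0\|_{H^r})$; this simultaneously rules out finite–time blow–up (hence $\mathrm I_{\max}=\R$) and gives the stated estimate. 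Continuous dependence on the data in $H^r_+(\T)$ follows from the local theory together with the global-in-time bound by the usual patching argument.

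First I would record that, by the smallness hypothesis $u_0\in\mathcal B_{L^2_+}(1)$ together with the conservation of the $L^2$–norm (and of the average $\langle u|1\rangle$), we have $\|u(t)\|_{L^2}=\|u_0\|_{L^2}<1$ for all $t$ in the interval of existence; this is exactly the hypothesis needed to invoke Proposition~\ref{controle loi de conservation}. Next, fix $\lambda\gg 0$ and let $s=\frac{\lfloor 2r\rfloor+1}{2}$, which satisfies $s\ge r$ (indeed $\lfloor 2r\rfloor\ge 2r-1$, so $\frac{\lfloor 2r\rfloor+1}{2}\ge r$); apply the two–sided estimate~\eqref{inegalite pour tout s} of Proposition~\ref{controle loi de conservation} with $f=u(t)$ and this value of the exponent. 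Using the conservation of $\mathcal H_s(u)=\langle (L_u+\lambda)^s u\,|\,u\rangle$ from Lemma~\ref{loi de conservation} — equivalently $\|(L_{u(t)}+\lambda)^{s/2}u(t)\|_{L^2}$ is constant in $t$, and more simply here $\|(L_{u(t)}+\lambda)^{s}u(t)\|_{L^2}=\|(L_{u_0}+\lambda)^{s}u_0\|_{L^2}$ since $U(t)$ is unitary and intertwines $L_{u(t)}$ with $L_{u_0}$ by~\eqref{U(t)*LuU(t)=Lu0} — one gets
\[
\tfrac1C\,\|u(t)\|_{H^{s}}\;\le\;\|(L_{u(t)}+\lambda)^{s}u(t)\|_{L^2}\;=\;\|(L_{u_0}+\lambda)^{s}u_0\|_{L^2}\;\le\;C\,\|u_0\|_{H^{s}},
\]
so $\sup_t\|u(t)\|_{H^{s}}\le C^2\|u_0\|_{H^{s}}$. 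Since $s\ge r$, interpolating (or simply using $H^s_+\hookrightarrow H^r_+$) yields $\sup_t\|u(t)\|_{H^{r}}\le C(\|u_0\|_{H^r})$, which is precisely the a priori bound.

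**Main obstacle.** There is no serious analytic obstacle left here, because the heavy lifting has been done in Proposition~\ref{controle loi de conservation}; the one point to handle carefully is a potential circularity of hypotheses. Proposition~\ref{controle loi de conservation} and Lemma~\ref{loi de conservation} are stated for $r>\frac32$ exactly so that $u(t)\in H^r_+(\T)$ exists on a (a priori only local) time interval, so one must run the argument on $\mathrm I_{\max}$ rather than on all of $\R$: the conservation laws and the two–sided estimate hold on $\mathrm I_{\max}$, they give a uniform $H^r$–bound on $\mathrm I_{\max}$, the blow–up alternative then forces $\mathrm I_{\max}=\R$, and only afterwards does the bound hold globally. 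I would therefore phrase the proof as: local theory $\Rightarrow$ solution on $\mathrm I_{\max}$; $L^2$–conservation and smallness $\Rightarrow$ hypotheses of Proposition~\ref{controle loi de conservation} hold on $\mathrm I_{\max}$; conservation of $\mathcal H_s$ $\Rightarrow$ uniform $H^r$–bound on $\mathrm I_{\max}$; blow–up alternative $\Rightarrow$ $\mathrm I_{\max}=\R$; finally assemble continuous dependence from the local flow plus the global bound.
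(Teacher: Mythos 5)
Your overall strategy --- local well--posedness for $r>\frac32$, the blow--up alternative, and an a priori $H^r$ bound obtained from Lemma~\ref{loi de conservation} together with the two--sided estimate of Proposition~\ref{controle loi de conservation} --- is exactly the route the paper takes, and your care about running the argument on $\mathrm{I}_{\max}$ before concluding $\mathrm{I}_{\max}=\R$ is fine. However, your choice of exponent breaks the argument as written: you apply the estimate with $s=\frac{\lfloor 2r\rfloor+1}{2}\ge r$, but the datum is only assumed to lie in $H^r_+(\T)$, so $u_0$ (and hence $u(t)$) need not belong to $H^s_+(\T)$ for this larger $s$. The chain $\frac1C\|u(t)\|_{H^s}\le\|(L_{u(t)}+\la)^su(t)\|_{L^2}=\|(L_{u_0}+\la)^su_0\|_{L^2}\le C\|u_0\|_{H^s}$ is then either inapplicable (inequality~\eqref{inegalite pour tout s} is stated for $f\in H^s_+(\T)$) or vacuous (its right--hand side may be infinite), and in any case your final constant depends on $\|u_0\|_{H^s}$, not on $\|u_0\|_{H^r}$ as the statement requires. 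In addition, the conservation you invoke amounts to $\mathcal H_{2s}(u)=\|(L_u+\la)^su\|_{L^2}^2$, and Lemma~\ref{loi de conservation} only covers exponents up to $2r$, whereas $2s=\lfloor 2r\rfloor+1$ can exceed $2r$ (e.g.\ $r=1.6$ gives $2s=4>3.2$), so even the identity you use is not guaranteed at that exponent.

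The repair is immediate and is precisely what the paper does: apply \eqref{inegalite pour tout s} and the conservation law directly with exponent $s=r$, which is admissible since $0\le r\le\frac{\lfloor 2r\rfloor+1}{2}$ and $2r\le 2r$. Together with $\|u(t)\|_{L^2}=\|u_0\|_{L^2}<1$ (which makes the constant in Proposition~\ref{controle loi de conservation} uniform in $t$), this yields $\frac1C\|u(t)\|_{H^r}\le\|(L_{u_0}+\la)^ru_0\|_{L^2}\le C\|u_0\|_{H^r}$ on $\mathrm{I}_{\max}$, and the remainder of your argument (blow--up alternative, then continuous dependence from the local theory) goes through unchanged.
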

	
	\begin{proof}
		Let $u_0\in H^r_+(\T)\,.$    Recall by \eqref{flot}\,, there exists a unique solution  $u\in  \mathcal C([-T,T],H^r_+(\T))$\,, $s>\frac32\,,$  satisfying $u(0,\cdot)=u_0\,.$ In addition, in view of the previous proposition, we infer for $\|u_0\|_{L^2}<1\,,$
		\begin{align*}
			\frac{1}{C}\,\|u(t)\|_{H^r}
			\leq \|(L_{u(t)}+\la)^ru(t)\|_{L^2}
			=\|(L_{u_0}+\la)^ru_0\|_{L^2}\leq C\|u_0\|_{H^r}\,.
		\end{align*}
	\end{proof}

	\vskip1cm
	\section{Extension of the flow of (CS\texorpdfstring{$^+$}{+}) to \texorpdfstring{$\Ltwo$}{L2+(T)} }
	\label{section: Extension du flot}
	\vskip0.2cm
	
	In this section, we establish our main result, which states that, for $\|u_0\|_{L^2}<1\,,$ the flow of \eqref{CS+}
	\be
	\begin{array}{cccc} 
		\mathcal S^+(t)&:H^2_+(\T)&\longrightarrow& H^2_+(\T)\\
		&u_0&\longmapsto&u(t)
	\end{array}\,,
	\ee 
	defined globally on $H^2_+(\T)$ via Theorem~\ref{GWP k geq 4}\,,
	can be extended continuously to the critical regularity $L^2_+(\T)\,.$ 
 For this purpose, consider any initial data $u_0\in\Ltwo\,,$ $\|u_0\|_{L^2}<1\,.$ Then we approximate $u_0$  by a sequence $(u_0^\eps)\subseteq H^2_+(\T)\,$.
Thus, in view of Theorem~\ref{GWP k geq 4}\,, the time evolution $u^\eps(t):=\mathcal S^+(t)u_0^\eps$ of  $u_0^\eps$ is well-defined  for all $t\in\R$\,.
	Our goal is to prove that the sequence $(u^\eps)$ converge to a unique limit $u$ in $\mathcal C(\R,L^2_+(\T))$\,. 
 This limit potential shall be called ``solution'' to the Cauchy problem \eqref{CS+}\,.
	It  will be uniquely well--defined, 
 regardless of the chosen approximate sequence $(u_0^\eps)$ that approximate $u_0\in\Ltwo\,.$ 
 Moreover, 
 it will  satisfies the conservation of the $L^2$--norm (i.e. $\|u(t)\|_{L^2}=\|u_0\|_{L^2}\,$ for all $t\in\R\,$).

	\vspace{0.4cm}

	\begin{center}
		\tikzset{every picture/.style={line width=0.75pt}} 
		
		\begin{tikzpicture}[x=0.7pt,y=0.7pt,yscale=-0.9,xscale=0.9]
			
			\draw    (160,53) -- (160,98) ;
			\draw [shift={(160,100)}, rotate = 270] [color={rgb, 255:red, 0; green, 0; blue, 0 }  ][line width=0.75]    (10.93,-3.29) .. controls (6.95,-1.4) and (3.31,-0.3) .. (0,0) .. controls (3.31,0.3) and (6.95,1.4) .. (10.93,3.29)   ;
			\draw    (176,115) -- (294,115) ;
			\draw [shift={(294,115)}, rotate = 180] [color={rgb, 255:red, 0; green, 0; blue, 0 }  ][line width=0.75]    (10.93,-3.29) .. controls (6.95,-1.4) and (3.31,-0.3) .. (0,0) .. controls (3.31,0.3) and (6.95,1.4) .. (10.93,3.29)   ;
			\draw    (317,53) -- (317,98) ;
			\draw [shift={(317,52)}, rotate = 90] [color={rgb, 255:red, 0; green, 0; blue, 0 }  ][line width=0.75]    (10.93,-3.29) .. controls (6.95,-1.4) and (3.31,-0.3) .. (0,0) .. controls (3.31,0.3) and (6.95,1.4) .. (10.93,3.29)   ;
			\draw [color={rgb, 255:red, 252; green, 8; blue, 8 }  ,draw opacity=1 ] [dash pattern={on 4.5pt off 4.5pt}]  (177,38) -- (294,38) ;
			\draw [shift={(294,38)}, rotate = 180] [color={rgb, 255:red, 252; green, 8; blue, 8 }  ,draw opacity=1 ][line width=0.75]    (10.93,-3.29) .. controls (6.95,-1.4) and (3.31,-0.3) .. (0,0) .. controls (3.31,0.3) and (6.95,1.4) .. (10.93,3.29)   ;
			\draw [color={rgb, 255:red, 155; green, 155; blue, 155 }  ,draw opacity=1 ] [dash pattern={on 0.84pt off 2.51pt}]  (0,52) -- (490,52) ;
			\draw [color={rgb, 255:red, 155; green, 155; blue, 155 }  ,draw opacity=1 ] [dash pattern={on 0.84pt off 2.51pt}]  (0,24) -- (490,24) ;
			\draw [color={rgb, 255:red, 155; green, 155; blue, 155 }  ,draw opacity=1 ] [dash pattern={on 0.84pt off 2.51pt}]  (0,101) -- (490,101) ;
			\draw [color={rgb, 255:red, 155; green, 155; blue, 155 }  ,draw opacity=1 ] [dash pattern={on 0.84pt off 2.51pt}]  (0,129) -- (490,129) ;
			
			\draw (152,33) node [anchor=north west][inner sep=0.75pt]    {$u_{0}$};
			\draw (152,106) node [anchor=north west][inner sep=0.75pt]    {$u_{0}^{\varepsilon }$};
			\draw (303,106) node [anchor=north west][inner sep=0.75pt]    {$u^{\varepsilon }( t)$};
			\draw (303,29) node [anchor=north west][inner sep=0.75pt]    {$u( t)$};
			\draw (232,15) node [anchor=north west][inner sep=0.75pt]   [align=left] {\textcolor[rgb]{0.98,0.04,0.04}{?}};
			\draw (2,27) node [anchor=north west][inner sep=0.75pt]    {$L_{+}^{2}( \T) \ :$};
			\draw (2,104) node [anchor=north west][inner sep=0.75pt]    {$H_{+}^{2}( \T) \ :$};

		\end{tikzpicture}
	\end{center}

	\vspace{0.6cm}
	
	\begin{Rq} \label{produit distrib}
	       Note that, due to the presence of the nonlinear term $D\Pi_+(\va{u}^2)u$ in the equation, it may seem intriguing to say that there exists a solution with $L^2$--regularity. Nevertheless, the equation is still well--defined in the distribution sense since the product of two functions with nonnegative frequencies is well--defined and continuous. 
 Indeed, let $\mathscr{D}'_+(\T)$ denotes the following  distribution space
 \[
    \mathscr{D}'_+(\T)=\{u=\sum_{k\geq 0}\fr{u}(k)\eee^{ikx}\;;\,\e M\in\N, \,\va{\fr{u}(k)}\lesssim(1+\va{k}^{2M})^{\frac12}\}\,,
 \]
 and consider two sequences of smooth functions in the Hardy space $f_n\,,\,g_n\in\mathcal{C}^\infty_+(\T)$ such that we suppose
 \[
    f_n\longrightarrow f\quad \text{and}\quad g_n\longrightarrow g \quad\text{in}\ \mathscr{D}'_+(\T)\,.
 \]
 Then, $f_ng_n\longrightarrow fg$ in $\mathscr{D}'_+(\T)$ and for all $k\in\Nzero\,,$ 
 $$
    \fr{fg}(k)=\sum_{\ell=0}^k\fr{f}(k-\ell)\fr{g}(\ell)\,,
$$
 as $\fr{f_n\,g_n}(k)=\sum_{\ell=0}^k\fr{f_n}(k-\ell)\fr{g_n}(\ell)$\,.
	\end{Rq}
	~\\
	\subsection{Uniqueness of the limit and weak convergence in \texorpdfstring{$\Ltwo$}{L2+(T)}}
	~\\
	By passing to the limit as $\eps\to0\,,$ it is necessary to first prove that the limit potential $u(t)$ is uniquely well--characterized for all $t\in\R\,,$ and is independent of the choice of the sequence $(u_0^\eps)\subseteq H^2_+(\T)$ that approximate $u_0\in L^2_+(\T)\,.$
	The key point is to use the explicit formula of the solution of the focusing Calogero--Sutherland DNLS equation.
	Thus, for all $(u^\eps(t))\subseteq \Htwo\,,$ we have by Proposition~\ref{The inverse dynamical formula}\,,
	\be\label{explicit u-eps}
	u^\eps(t,z)=\ps{(\Id-z\eee^{-it}\eee^{-2itL_{u_0^\eps}}S^*)^{-1}\,u_0^\eps}{1}\,,\qquad \qlq \,z\in \D\,.
	\ee
	Our goal in this subsection is to pass to the limit in
	this formula. Therefore, we need first  to give a meaning to the operator
	$L_{u_0}$ when $u_0\in\Ltwo$\,.
	To handle this, we recall in a few lines the work of G\'erard--Lenzmann \cite[Appendix A]{GL22} who defined the operator $L_u$ with $u\in L^2_+(\R)$ via the standard theory of quadratic form. This new operator will coincide with the former Lax operator $L_u=D-T_{u}T_{\bar{u}}$ when $u\in H^2_+(\R)\,$. The same proof presented in \cite[Appendix A]{GL22}, works out on the torus $\T\,,$ and thus, one can define $L_u$ for $u\in\Ltwo\,.$ We recall the main points of the proof
	:
	\begin{enumerate}[(i)]
		\item For $u\in \Ltwo$ and $f,\,g\in H^{\frac12}_+(\T)\,,$  consider the quadratic form 
		$$
		\mathcal{Q}_u(f,g)=\ps{D^{1/2}f}{D^{1/2}g}-\ps{T_{\bar{u}}f}{T_{\bar{u}}g}\,.
		$$
		\item Observe that,  by decomposing $u$ in high and low frequency $$
		u(x)=u_N(x)+R_N(u,x)\,,\qquad 
		\begin{cases}
			u_N(x):=\sum_{n\geq 0}^N\fr{u}(n)\en[n]
			\\
			R_N(u,x):=\sum_{n\geq N+1}\fr{u}(n)\eee^{inx} 
		\end{cases}\,,
		$$ 
		and using Lemma~\ref{inegality Pi(u bar h)}\,, one can prove that   for all $\eta>0\,,$ $\e\, N_\eta:=N_\eta(u)\in\Nzero$ uniform on every compact set of $\Ltwo\,,$ such that 
		\be\label{perturbation}
		\Ldeux{T_{\bar{u}}f}^2<2\eta^2 \Big(\ps{Df}{f}+\Ldeux{f}^2\Big)+2N_\eta^2\|u\|_{L^2}^2\Ldeux{f}^2\,.
		\ee
		Therefore,
		\be\label{Qu f,f}
		\mathcal{Q}_u(f,f)\geq (1-2\eta^2)\|f\|_{{\dot{H}}^{1/2}}^2-2(N_\eta^2\|u\|_{L^2}^2+\eta^2)\Ldeux{f}^2\,.
		\ee
		\item
		Now, fixing $\eta$ small enough, there exists $K:=K(u)>0$  uniform on every compact of $\Ltwo\,,$ such that the following positive definite quadratic form  
		$$
		\tilde{\mathcal{Q}}_u(f,g):=\mathcal{Q}_u(f,g)+K\ps{f}{g}\,, \quad f,g\in H^{\frac12}_+(\T)\,, 
		$$
		 define a new inner product on $H^{\frac12}_+(\T)\,.$
		\item
		Using the theory of quadratic forms (see \cite{RS72}), 
		we introduce for $u\in\Ltwo$\,,
		\be
		\label{DomLu}
		\mathrm{Dom}(L_u)=\lracc{h\in H^{\frac12}_+(\T)\,;\ \e\, C>0\,,\ \va{\tilde{\mathcal{Q}}_u(h,g)}\leq C\Ldeux{g}, \qlq g\in  H^{\frac12}_+(\T)},
		\ee
		and    for any $f\in \mrm{Dom}(L_u)$
		\be
		\label{Lu, u L2}
		\ps{L_u(f)}{g}=\mathcal{Q}_u(f,g)\,, \quad \qlq g \in H^{\frac12}_+(\T)\,,
		\ee
		and one shows that this new operator $L_u$ is a \underline{self--adjoint} operator with a dense domain in $H^{\frac12}_+(\T)\,.$
	\end{enumerate}
	\begin{center}
		***
	\end{center}

	\subsubsection{Spectral properties of \texorpdfstring{$L_{u_0}$}{Lu0} for \texorpdfstring{$u_0\in \Ltwo$}{u0 in L2}}
	\label{uniqueness of the limit}
	Now that the operator $L_{u_0}$ has been introduced for $u_0\in \Ltwo\,,$ one can examine some of its spectral properties.
	As noted above, it is a self--adjoint operator with compact resolvent 
	then it has discrete spectrum. Moreover, its quadratic form $\mathcal Q_{u_0}$ is bounded from below. therefore,
	\be\label{spectre Lu0 u0 L2}
	\sigma(L_{u_0}):=\lracc{\la_0(u_0)\leq \ldots\leq \la_n(u_0)\leq \ldots}\,,\qquad \la_0>-\infty\,.
	\ee
	To characterize this spectrum, we use the following proposition.
	
	\begin{prop}\label{Lipschitz}
		For every $n\in\Nzero\,$, the map $u\in L^2_+(\T)\mapsto \la_n(u) $ is Lipschitz continuous on compact subsets of  $L^{2}_{+}(\T)\,$.
	\end{prop}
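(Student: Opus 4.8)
The goal is to show that for each fixed $n\in\Nzero$, the map $u\mapsto\lambda_n(u)$ is Lipschitz on compact subsets of $\Ltwo$. The natural tool is the min--max (Courant--Fischer) characterization of the eigenvalues of the self--adjoint, semibounded operator $L_u$ in terms of its quadratic form $\mathcal Q_u$. Concretely, since $L_u$ has discrete spectrum bounded from below (by the construction recalled above, via $\tilde{\mathcal Q}_u$), one has
\[
\lambda_n(u)=\min_{\substack{V\subseteq H^{\frac12}_+(\T)\\ \dim V=n+1}}\ \max_{\substack{f\in V\\ \|f\|_{L^2}=1}}\ \mathcal Q_u(f,f)\,,
\]
and the whole problem reduces to controlling the difference $\mathcal Q_u(f,f)-\mathcal Q_v(f,f)$ uniformly over the relevant test functions.

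\textbf{Key steps.} First I would write $\mathcal Q_u(f,f)-\mathcal Q_v(f,f)=\Ldeux{T_{\bar v}f}^2-\Ldeux{T_{\bar u}f}^2=\ps{T_{\bar v}f}{T_{\bar v}f}-\ps{T_{\bar u}f}{T_{\bar u}f}$, and estimate this via the polarization/telescoping identity $\Ldeux{T_{\bar v}f}^2-\Ldeux{T_{\bar u}f}^2=\mathrm{Re}\,\ps{T_{\overline{v-u}}f}{T_{\bar v}f+T_{\bar u}f}$, hence
\[
\big|\mathcal Q_u(f,f)-\mathcal Q_v(f,f)\big|\le \Ldeux{T_{\overline{u-v}}f}\big(\Ldeux{T_{\bar u}f}+\Ldeux{T_{\bar v}f}\big)\,.
\]
Now apply Lemma~\ref{inegality Pi(u bar h)} three times: $\Ldeux{T_{\overline{u-v}}f}^2\le(\ps{Df}{f}+\Ldeux{f}^2)\|u-v\|_{L^2}^2$ and similarly for $u,v$. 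The trouble is the factor $\ps{Df}{f}+\Ldeux{f}^2=\|f\|_{H^{1/2}}^2$, which is not controlled by $\|f\|_{L^2}=1$ alone; this is where one must be careful. The remedy is that the min--max can be restricted to the $(n+1)$-dimensional spectral subspace $V_u$ spanned by the first $n+1$ eigenfunctions of $L_u$ (for the competitor in the estimate for $\lambda_n(v)$), on which $\mathcal Q_u(f,f)\le\lambda_n(u)\|f\|_{L^2}^2$, and then use $\|f\|_{H^{1/2}}^2=\mathcal Q_u(f,f)+\Ldeux{T_{\bar u}f}^2\le \lambda_n(u)+\Ldeux{u}^2\|f\|_{H^{1/2}}^2$. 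Rearranging, as long as $\Ldeux{u}^2<1$ — or, in general, after first translating $L_u$ by a constant $K(u)$ as in the construction of $\tilde{\mathcal Q}_u$ so that the form is coercive with a lower bound uniform on the given compact set — one gets $\|f\|_{H^{1/2}}^2\le C$ for $f\in V_u$ with $\|f\|_{L^2}=1$, where $C$ depends only on the compact set through $\sup\lambda_n(u)$ and $\sup\Ldeux{u}$ (both finite on a compact set, the former by continuity of $\lambda_n$, which at this stage we only need to know is \emph{bounded} on compacts — itself a consequence of the crude bound $\lambda_n(u)\le\ps{D P_n}{P_n}$ with $P_n$ a fixed trigonometric polynomial subspace).

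\textbf{Conclusion of the argument.} With such a uniform bound $\|f\|_{H^{1/2}}\le\sqrt C$ in hand, pick $V=V_u$ as a competitor in the min--max for $\lambda_n(v)$:
\[
\lambda_n(v)\le\max_{\substack{f\in V_u\\ \|f\|_{L^2}=1}}\mathcal Q_v(f,f)\le\max_{\substack{f\in V_u\\ \|f\|_{L^2}=1}}\Big(\mathcal Q_u(f,f)+\big|\mathcal Q_u(f,f)-\mathcal Q_v(f,f)\big|\Big)\le\lambda_n(u)+C'\|u-v\|_{L^2}\,,
\]
where $C'$ absorbs $\sqrt C$ and $\sup(\Ldeux u+\Ldeux v)$ over the compact set. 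By symmetry $\lambda_n(u)\le\lambda_n(v)+C'\|u-v\|_{L^2}$, hence $|\lambda_n(u)-\lambda_n(v)|\le C'\|u-v\|_{L^2}$, which is the claimed Lipschitz estimate. The main obstacle, as indicated, is that the quadratic-form difference is only controlled in terms of the $H^{1/2}$-norm of the test function, so the crux is the \emph{a priori} $H^{1/2}$-bound on normalized vectors in the low spectral subspace $V_u$, uniform over the compact set; once that is secured via the coercivity of $\tilde{\mathcal Q}_u$ (equivalently, $\Ldeux u<1$ in the regime of interest) and the boundedness of $\lambda_n$ on compacts, the rest is the standard min--max Lipschitz argument.
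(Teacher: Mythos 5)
Your proposal is correct and takes essentially the same approach as the paper: the min--max/max--min principle, the quadratic-form difference estimate via Lemma~\ref{inegality Pi(u bar h)}, and the crucial uniform $H^{1/2}$ bound on normalized vectors of the span of the first $n+1$ eigenfunctions of $L_u$, obtained from the crude bound $\la_n(u)\leq n$ together with a coercivity estimate uniform on compact subsets. Note only that the $\|u\|_{L^2}<1$ shortcut cannot cover the general statement, so the fallback you indicate (the high--low frequency coercivity, i.e.\ inequality~\eqref{perturbation} with $N_\eta$ uniform on compacts) is precisely the route the paper takes.
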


	\begin{proof}
		Let $u\in\Ltwo\,.$ The key ingredient is to use the max--min principle,
		$$
		\la_{n}(u)= \max_{\underset{\dim F\,\leq n}{F\subseteq L^2_+}}\,
		\min\lracc{\mathcal{Q}_u(h,h)\,;\,h\in  F^{\perp}\cap H^{\frac12}_+(\T)\,,\ \|h\|_{L^2}=1 }.$$
    For any $v\in\Ltwo\,,$ $h\in H^{\frac12}_+(\T)\,,$ $\|h\|_{L^2}=1\,,$
    \begin{align*}
			\left| \mathcal{Q}_u(h,h) -\mathcal{Q}_v(h,h) \right| &=
			\left|\Ldeux{T_{\overline{v}}h}^2 -\Ldeux{T_{\overline{u}}h}^2 \right|\notag\\
			&\leq  \Ldeux{T_{(\overline{u}-\overline{v})}h}(\Ldeux{T_{\overline{v}}h}+\Ldeux{T_{\overline{u}}h})
            \\
            &\leq \Ldeux{u-v}\big(\,\Ldeux{u}+\Ldeux{v}\big)\left(1+\ps{Dh}{h}\right)
		\end{align*}
  thanks to inequality~\eqref{inegalite-Tu-bar-h}\,.
  Thus, 
  \be\label{Qv-Qu}
  \mathcal{Q}_v(h,h) \leq \mathcal{Q}_u(h,h) +   \Ldeux{u-v}\big(\,\Ldeux{u}+\Ldeux{v}\big)\left(1+\ps{Dh}{h}\right)\,.
  \ee
  In particular, considering any subspace $F$ of $\Ltwo$ of dimension $n$\,, and for any $h\in F^\perp\cap H^{\frac12}_+(\T) \cap \bigoplus_{k=0}^n\ker(L_u-\la_k(u)\Id)\,,$
the latter inequality holds.
In addition, observe by definition of $L_u=D-T_uT_{\bar u}\,,$ and by applying inequality~\eqref{perturbation} with $\eta=\frac12$\,,
  \begin{align*}
      \ps{Dh}{h}\leq &\, \ps{L_uh}{h}+\|T_{\bar u}h\|_{L^2}^2
      \\
      \leq &\, \ps{L_uh}{h} +\frac12 (\ps{Dh}{h}+1)+2N^2\|u\|_{L^2}^2\,, 
  \end{align*}
where $N\in\N$ is uniform on every compact subset of $\Ltwo$\,. That is,
\be\label{Dh|h}
     \ps{Dh}{h}\leq 2\la_n(u) +1+4N^2\|u\|_{L^2}^2\,,
\ee
since $\mathcal{Q}_u(h,h)=\ps{L_uh}{h}\leq \la_n(u)$ when $h\in \bigoplus_{k=0}^n\ker(L_u-\la_k(u)\Id)\,.$ Furthermore,
  applying once more the max-min principle, 
    \begin{align}\label{la[n]leq n}
        \la_n(u)
            \leq &\, \max_{\underset{\dim F\,\leq n}{F\subseteq L^2_+}}\,
		  \min\lracc{\ps{Dh}{h}\,;\,h\in          F^{\perp}\cap H^{\frac12}_+(\T)\,,\ \|h\|_{L^2}=1 }
    \\
        =&\, \min\lracc{\ps{Dh}{h}\,;\,h\in          \lracc{1,\ldots, \eee^{i(n-1)x}}^{\perp}\cap H^{\frac12}_+(\T)\,,\ \|h\|_{L^2}=1 }\notag
        \\
        = &\,n\notag
    \end{align}
Hence, combining \eqref{Qv-Qu}\,, \eqref{Dh|h} and \eqref{la[n]leq n}\,, we find for all $h\in F^\perp\cap H^{\frac12}_+(\T) \cap \bigoplus_{k=0}^n\ker(L_u-\la_k(u)\Id)\,,$ and since  $\mathcal{Q}_u(h,h)\leq \la_n(u)$\,,
\[
    \mathcal{Q}_v(h,h) \leq \la_n(u)+2\Ldeux{u-v}\big(\,\Ldeux{u}+\Ldeux{v}\big)\left(n +1+2N^2\|u\|_{L^2}^2\right)\,.
\]
Therefore, 
    $$
		\la_n(v)\leq \la_n(u)+2\,(n+1+2N^2\|u\|_{L^2}^2)\Ldeux{u-v}\left(\Ldeux{u}+\Ldeux{v}\right)\,.
    $$
\end{proof}

	\begin{corollary}[Characterization of the spectrum of  $L_{u_0}$]
		\label{caracterisation spectre Lu0}
		Let $(u_0^\eps)\subseteq H^2_+(\T)$ such that $u_0^\eps\to u_0$ in $\Ltwo$\,. The spectrum of $L_{u_0}$ is given by
		$$
		\sigma(L_{u_0})=\lracc{\,\lim_{\eps\to0}\la_n(u_0^\eps)\,\mid\; \la_n(u_0^\eps)\in \sigma(L_{u^\eps_0})\,,\, n\in\Nzero}\,.
		$$
	\end{corollary}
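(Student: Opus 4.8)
The plan is to read off this identity directly from the Lipschitz continuity of the eigenvalue maps proved in Proposition~\ref{Lipschitz}, combined with the spectral description of $L_{u_0}$ already recorded in~\eqref{spectre Lu0 u0 L2}. In other words, all the analytic work is done; the corollary itself is essentially a bookkeeping statement about how the ordered eigenvalue sequences behave in the limit $\eps\to0$.

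First I would observe that the set $K:=\{u_0\}\cup\{u_0^\eps\}_{\eps>0}$ is a compact subset of $\Ltwo$, being a convergent sequence together with its limit. By Proposition~\ref{Lipschitz}, for each fixed $n\in\Nzero$ the map $u\mapsto\la_n(u)$ is Lipschitz continuous on $K$, hence $\la_n(u_0^\eps)\longrightarrow\la_n(u_0)$ as $\eps\to0$. In particular the limit $\lim_{\eps\to0}\la_n(u_0^\eps)$ exists for every $n$, equals $\la_n(u_0)$, and is by construction a limit of genuine eigenvalues $\la_n(u_0^\eps)\in\sigma(L_{u_0^\eps})$, the spectrum $\sigma(L_{u_0^\eps})$ being discrete by Proposition~\ref{Lu self-adjoint}. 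This already gives the inclusion of the right--hand set into $\sigma(L_{u_0})$.

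For the reverse inclusion I would use that $L_{u_0}$ is self--adjoint with compact resolvent and with quadratic form $\mathcal Q_{u_0}$ bounded from below, so that, as in~\eqref{spectre Lu0 u0 L2}, its spectrum is purely discrete and consists exactly of the increasingly ordered sequence $(\la_n(u_0))_{n\in\Nzero}$ counted with multiplicity. Combining the two points yields both inclusions at once:
\[
\sigma(L_{u_0})=\lracc{\la_n(u_0)\,;\,n\in\Nzero}=\lracc{\,\lim_{\eps\to0}\la_n(u_0^\eps)\,;\,n\in\Nzero}\,,
\]
which is the claimed formula. The only delicate point — and the closest thing to an obstacle — is precisely this last bookkeeping: one must be sure that the enumeration $(\la_n)_{n\in\Nzero}$ exhausts the whole spectrum on both sides and respects multiplicities, so that no eigenvalue of $L_{u_0}$ is attached to the wrong index or missed altogether. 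This is guaranteed because the numbers $\la_n(u)$ are, by the max--min characterization used in the proof of Proposition~\ref{Lipschitz}, exactly the ordered eigenvalue sequence (with multiplicity) of the semi--bounded self--adjoint operator $L_u$, and the convergence $\la_n(u_0^\eps)\to\la_n(u_0)$ holds index by index, so multiplicities can only accumulate in the limit, never disappear.
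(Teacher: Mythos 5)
Your proposal is correct and follows the same route as the paper, whose proof of this corollary is simply the observation that it follows directly from Proposition~\ref{Lipschitz} (Lipschitz continuity of $u\mapsto\la_n(u)$ on compact subsets, applied to the compact set consisting of the convergent sequence $(u_0^\eps)$ together with its limit) combined with the discreteness of $\sigma(L_{u_0})$ recorded in \eqref{spectre Lu0 u0 L2}. Your extra remarks on the max--min characterization and multiplicities merely spell out what the paper leaves implicit.
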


	\begin{proof}
		In light of the previous proposition, the result follows directly.
	\end{proof}
	
	\begin{prop}\label{resolvent cv}
		Let $u_0 \in L_{+}^2(\mathbb{T})$ and $\left(u_0^{\varepsilon}\right) \subseteq H_{+}^2(\mathbb{T})$ such that $u_0^{\varepsilon} \rightarrow u_0$ in $L_{+}^2(\mathbb{T})$. Then $L_{u_0^{\varepsilon}} \rightarrow L_{u_0}$ in the strong resolvent sense as $\varepsilon \rightarrow 0$\,.
	\end{prop}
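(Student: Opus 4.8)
The plan is to prove convergence of the resolvents at the single real point $z=-K$, where $K>0$ is taken large enough; by the standard criterion for self--adjoint operators that are uniformly bounded below, this already yields strong resolvent convergence. First I would use that $\{u_0^\eps\}_{\eps}\cup\{u_0\}$ is a compact subset of $\Ltwo$, so the integers $N_\eta$ of \eqref{perturbation} and the constant in \eqref{Qu f,f} can be chosen uniform over this set; fixing $\eta=\tfrac12$ and then $K$ large, one gets a coercivity estimate $\mathcal{Q}_v(h,h)+K\Ldeux{h}^2\gtrsim \|h\|_{H^{1/2}}^2$ for every $v\in\{u_0^\eps\}_{\eps}\cup\{u_0\}$ and every $h\in H^{\frac12}_+(\T)$. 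Hence each $L_v+K$ is a positive self--adjoint isomorphism and $(L_v+K)^{-1}$ maps $\Ltwo$ boundedly into $H^{\frac12}_+(\T)$, uniformly in $v$. Given $g\in\Ltwo$, I set $f_\eps:=(L_{u_0^\eps}+K)^{-1}g$ and $f:=(L_{u_0}+K)^{-1}g$, characterized through \eqref{Lu, u L2} by $\mathcal{Q}_{u_0^\eps}(f_\eps,h)+K\ps{f_\eps}{h}=\ps{g}{h}$ and $\mathcal{Q}_{u_0}(f,h)+K\ps{f}{h}=\ps{g}{h}$ for all $h\in H^{\frac12}_+(\T)$; the goal is $f_\eps\to f$ in $\Ltwo$.

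Next I would test the equation for $f_\eps$ against $h=f_\eps$ and use the coercivity estimate to obtain $\|f_\eps\|_{H^{1/2}}\lesssim \Ldeux{g}$, uniformly in $\eps$. Then, along any subsequence, weak compactness in the reflexive space $H^{\frac12}_+(\T)$ together with the compact Rellich embedding $H^{\frac12}_+(\T)\hookrightarrow\Ltwo$ yields a further subsequence with $f_\eps\rightharpoonup f_\ast$ weakly in $H^{\frac12}_+(\T)$ and $f_\eps\to f_\ast$ strongly in $\Ltwo$.

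The core of the argument is to pass to the limit in the identity for $f_\eps$ with a fixed test function $h\in H^{\frac12}_+(\T)$. The terms $K\ps{f_\eps}{h}$ and $\ps{D^{1/2}f_\eps}{D^{1/2}h}$ converge because $\phi\mapsto\ps{\phi}{h}$ and $\phi\mapsto\ps{D^{1/2}\phi}{D^{1/2}h}$ are bounded linear functionals on $H^{\frac12}_+(\T)$. The subtle term $\ps{T_{\overline{u_0^\eps}}f_\eps}{T_{\overline{u_0^\eps}}h}$ I would handle as a pairing of a strongly convergent and a weakly convergent factor. On the one hand, by the sharp inequality \eqref{inegalite-Tu-bar-h} of Lemma~\ref{inegality Pi(u bar h)},
\[
\Ldeux{T_{\overline{u_0^\eps}}h-T_{\overline{u_0}}h}=\Ldeux{T_{\overline{u_0^\eps-u_0}}h}\leq \Ldeux{u_0^\eps-u_0}\big(\ps{Dh}{h}+\Ldeux{h}^2\big)^{1/2}\,,
\]
which tends to $0$, so $T_{\overline{u_0^\eps}}h\to T_{\overline{u_0}}h$ strongly in $\Ltwo$. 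On the other hand, \eqref{inegalite-Tu-bar-h} and the uniform $H^{1/2}$--bound on $f_\eps$ show that $\big(T_{\overline{u_0^\eps}}f_\eps\big)_\eps$ is bounded in $\Ltwo$, and since $\overline{u_0^\eps}f_\eps\to\overline{u_0}f_\ast$ in $L^1(\T)$ (product of two strongly $L^2$--convergent sequences) every Fourier coefficient of $T_{\overline{u_0^\eps}}f_\eps=\Pi(\overline{u_0^\eps}f_\eps)$ converges to the corresponding one of $T_{\overline{u_0}}f_\ast$; hence $T_{\overline{u_0^\eps}}f_\eps\rightharpoonup T_{\overline{u_0}}f_\ast$ weakly in $\Ltwo$. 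Pairing, $\ps{T_{\overline{u_0^\eps}}f_\eps}{T_{\overline{u_0^\eps}}h}\to\ps{T_{\overline{u_0}}f_\ast}{T_{\overline{u_0}}h}$, so $\mathcal{Q}_{u_0}(f_\ast,h)+K\ps{f_\ast}{h}=\ps{g}{h}$ for every $h\in H^{\frac12}_+(\T)$. Since the form $\mathcal{Q}_{u_0}(\cdot,\cdot)+K\ps{\cdot}{\cdot}$ is coercive on $H^{\frac12}_+(\T)$ and $h\mapsto\ps{g}{h}$ is bounded for it, the definitions \eqref{DomLu}--\eqref{Lu, u L2} (equivalently, Lax--Milgram) force $f_\ast=f$.

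Finally, since every subsequence of $(f_\eps)$ has a further subsequence converging in $\Ltwo$ to the \emph{same} limit $(L_{u_0}+K)^{-1}g$, the full family converges, so $(L_{u_0^\eps}+K)^{-1}\to(L_{u_0}+K)^{-1}$ strongly on $\Ltwo$, which is strong resolvent convergence. I expect the main obstacle to be precisely the passage to the limit in $\ps{T_{\overline{u_0^\eps}}f_\eps}{T_{\overline{u_0^\eps}}h}$: the control on $f_\eps$ in $H^{1/2}$ is only weak, so naive continuity of the quadratic form fails, and one genuinely has to combine the uniform Hilbert--Schmidt--type bound and coefficientwise convergence of $T_{\overline{u_0^\eps}}f_\eps$ with the \emph{strong} $L^2$ convergence of $T_{\overline{u_0^\eps}}h$ supplied by the sharp estimate \eqref{inegalite-Tu-bar-h}.
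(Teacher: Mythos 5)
Your proposal is correct and follows essentially the same route as the paper: uniform coercivity of $\mathcal{Q}_{u_0^\eps}+K\ps{\cdot}{\cdot}$ via \eqref{perturbation}, a uniform $H^{\frac12}$ bound on $(L_{u_0^\eps}+K)^{-1}g$, weak $H^{\frac12}$/strong $L^2$ compactness, and passage to the limit in the weak formulation by pairing the strongly convergent $T_{\overline{u_0^\eps}}h$ with the weakly convergent $T_{\overline{u_0^\eps}}f_\eps$, then identifying the limit through \eqref{DomLu}--\eqref{Lu, u L2}. The only minor deviations are cosmetic: you obtain the uniform $L^2$ bound directly from coercivity rather than from the spectral estimate \eqref{control L2 phi eps}, and you spell out (via $L^1$ convergence of $\overline{u_0^\eps}f_\eps$ and $L^2$ boundedness) the weak convergence of $T_{\overline{u_0^\eps}}f_\eps$ that the paper asserts without detail.
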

	
	\begin{proof}
  For all $\eps>0$\,, we denote by $\phi_\la^{\eps}$ the vector $\phi_\la^{\varepsilon}:=\left(L_{u_0^{\varepsilon}}+\la\right)^{-1} h$\,, $h \in L_{+}^2(\mathbb{T})$\,, where $\la\gg 0\,.$
  Observe that $\la$ can be chosen uniformly with respect to $\eps$. 
  Indeed, by inequality~\eqref{perturbation}\,, and since $u_0^\eps\to u_0$ in $\Ltwo\,,$ there exists $N\in\Nzero$ uniform for all $\eps\,$, such that for $\eps$ small enough
  \be\label{coercive eps}
        \ps{(L_{\uzeroeps}+\lambda)g}{g}\geq \frac12\|g\|_{\dot H^\frac12}^2+(\la-\frac12-2N^2\|u_0\|_{L^2}^2)\|g\|_{L^2}^2\,.
  \ee
  Then, to apply the Lax--Miligram theorem, we choose in view of the last inequality $\la\gg 0$\,, such that $\ps{(L_{\uzeroeps}+\la)\cdot}{\cdot}$ is coercive and so $(L_{\uzeroeps}+\la)$ is invertible for all $\eps$ small. Our goal is to prove that $\phi^\eps_\la$ converges in $\Ltwo$. 
		For $g=\phi_\la^\eps$ in \eqref{coercive eps}\,, 
		\begin{align*}
	\ps{L_{\uzeroeps}\phi_\la^\eps}{\phi_\la^\eps}+\la\|\phi_\la^\eps\|_{L^2}^2
			\geq&\; \frac12 \| \phi_\la^\eps\|_{\dot H^{\frac12}}^2 +\big(\la-\frac12-2N^2\|u_0^\eps\|_{L^2}\big)\|\phi_\la^\eps\|_{L^2}^2 \,,
		\end{align*}
		which leads, for $\la\gg0\,$, to 
		$$
  \ps{L_{\uzeroeps}\phi_\la^\eps}{\phi_\la^\eps}+\la\|\phi_\la^\eps\|_{L^2}^2
		\gtrsim\;  \| \phi_\la^\eps\|_{H^{\frac12}}^2 \,.
		$$
		Hence, as $
		\ps{L_{\uzeroeps}\phi_\la^\eps}{\phi_\la^\eps}+\la\|\phi_\la^\eps\|_{L^2}^2=\ps{h}{\phi_\la^\eps}$\,,
		\be\label{control phi eps dans H1/2} 
		\ps{h}{\phi_\la^\eps}\gtrsim\; \, \| \phi_\la^\eps\|_{H^{\frac12}}^2 \,.
		\ee
  Using the Cauchy--Schwarz's inequality, we deduce $\| \phi_\la^\eps\|_{H^{\frac12}}^2 \leq \|h\|_{L^2}\|\phi_\la^\eps\|_{L^2}\,,$ where,   in view of Corollary~\ref{caracterisation spectre Lu0} and by \eqref{spectre Lu0 u0 L2}\,, we have for all $\eps>0\,,$ 
		\be
		\label{control L2 phi eps}
		\left\|\phi_\la^{\varepsilon}\right\|_{L^2} \leq \sup _n \frac{1}{\left|\lambda_n\left(u_0^{\varepsilon}\right)+\lambda\right|}\|h\|_{L^2} \leq C(\lambda)\|h\|_{L^2}\; .
		\ee    
		Therefore, 
		\[
		\| \phi_\la^\eps\|_{H^{\frac12}}^2\leq C(\la)\|h\|_{L^2}^2\;, \qquad \qlq \eps>0\,.
		\]
		Thus, there exists $\phi_\lambda \in L_{+}^2$ such that  up to a subsequence, 
		$$
		\phi_\la^{\varepsilon} \rightharpoonup \phi_\lambda \text { in }\Htwo
		\quad \text{ and } \quad
		\phi_\la^{\varepsilon} \rightarrow \phi_\lambda \text { in }\Ltwo\,.
		$$
		It remains to show that $\phi_\la=(L_{u_0}+\la)^{-1}h\,.$
		Indeed, for any $g\in H^{\frac12}_+(\T)\,,$ we have by definition of $\phi_\la^\eps\,,$ 
		$\left\langle\left(L_{u_0^{\varepsilon}}+\lambda\right) \phi_\la^{\varepsilon} \mid g\right\rangle=\langle h \mid g\rangle$. Namely,
		\begin{align}
			\label{Q u0 phi[lambda] g}
			\langle h \mid g\rangle
			=\,&\,
			\langle D^{\frac12}\phi_\la^{\varepsilon} \mid D^{\frac12} g\rangle-\left\langle T_{\bar{u}_0^{\eps}} \phi_\la^{\eps} \mid T_{\bar{u}_0^{\eps}} g\right\rangle+\la\ps{\phi_\la^\eps}{g} . 
		\end{align}
		Since
		$$
		\begin{cases}
			T_{\bar{u}_0^{\varepsilon}} g \longrightarrow T_{\bar{u}_0} g \\
			T_{\bar{u}_0^\eps} \phi_\la^{\varepsilon} \rightharpoonup T_{\bar{u}_0} \phi_\la 
			\\
			D^{\frac12}\phi_\la^{\varepsilon} \rightharpoonup D^{\frac12}\phi_\lambda
		\end{cases}
		$$
		in $L_{+}^2(\mathbb{T})$ as $\varepsilon \rightarrow 0$\,, then passing to the limit in \eqref{Q u0 phi[lambda] g}, we infer for all $g \in H_{+}^\frac12(\mathbb{T})$,
		$$
		\langle h \mid g\rangle=
		\langle D^{\frac12}\phi_\la \mid D^{\frac12} g\rangle-\left\langle T_{\bar{u}_0^{\eps}g} \phi_\la^{\eps} \mid T_{\bar{u}_0^{\eps}}\right\rangle +\la\ps{\phi_\la}{g}
		\, =:\,
		\ps{(L_{u_0}+\la)\phi_\lambda}{g}. 
		$$
		That is, $\phi_\la\in \mrm{Dom}(L_{u_0})$ and $\phi_\la=(L_{u_0}+\la)^{-1}h\,.$ 
		Therefore, $L_{u_0^{\varepsilon}} \rightarrow L_{u_0}$ in the strong resolvent sense as $\varepsilon \rightarrow 0\,.$
	\end{proof}
	
	\subsubsection{Characterization of the limit \texorpdfstring{$u(t)$}{u(t)}}
	
	\begin{prop}[Uniqueness of the limit potential $u(t)$]
		\label{caracterisation u}
		Let $u_0\in L^2_+(\T)\,.$  
		There exists a unique potential 
		$ u(t)\in L^2_+(\T)$\,,
		\be\label{formule explicite limite u}
		u(t,z)=\ps{(\Id-z\eee^{-it}\eee^{-2itL_{u_0}}S^*)^{-1}\,u_0}{1}\,,\qquad \qlq \,z\in \D\,,
		\ee
		such that,
		for any sequence 
		$(u_0^\varepsilon)\subseteq H^2_+(\T)$ with
		$
		\|u_0^\eps-u_0\|_{L^2}\underset{\eps\to0}{\longrightarrow} 0\,,
		$
		we have
		$$
		u^\eps(t)\rightharpoonup u(t)  \text{ in }\Ltwo\,,\qquad \qlq t\in \R\,.
		$$
	\end{prop}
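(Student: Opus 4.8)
The plan is to pass to the limit in the explicit formula \eqref{explicit u-eps} using the strong resolvent convergence $L_{u_0^\eps}\to L_{u_0}$ established in Proposition~\ref{resolvent cv}. The first step is to fix $t\in\R$ and $z\in\D$ and to rewrite the resolvent $(\Id-z\eee^{-it}\eee^{-2itL_{u_0^\eps}}S^*)^{-1}$ via its Neumann series: since $\|z\eee^{-it}\eee^{-2itL_{u_0^\eps}}S^*\|_{\mathcal L(L^2_+)}\leq\va z<1$ uniformly in $\eps$ (because $\eee^{-2itL_{u_0^\eps}}$ is unitary and $S^*$ is a contraction), the series $\sum_{k\geq0}(z\eee^{-it}\eee^{-2itL_{u_0^\eps}}S^*)^k$ converges in operator norm, uniformly in $\eps$, with geometric tail bounded by $\va z^{N}/(1-\va z)$. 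So it suffices to prove, for each fixed $k\in\Nzero$, that $(\eee^{-2itL_{u_0^\eps}}S^*)^k u_0^\eps\rightharpoonup(\eee^{-2itL_{u_0}}S^*)^k u_0$ weakly in $\Ltwo$ and then sum. A convenient bookkeeping device is to observe that $(\eee^{-2itL_{u_0^\eps}}S^*)^k u_0^\eps = \eee^{-2itL_{u_0^\eps}}S^*\eee^{-2itL_{u_0^\eps}}S^*\cdots$; since each factor $S^*$ is $\eps$-independent and bounded, the only $\eps$-dependence sits in the unitary groups $\eee^{-2itL_{u_0^\eps}}$.

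The key analytic input is that strong resolvent convergence of self-adjoint operators implies strong convergence of the associated unitary groups: $\eee^{-2itL_{u_0^\eps}}\to\eee^{-2itL_{u_0}}$ strongly on $\Ltwo$, uniformly for $t$ in compact sets (this is a standard consequence, e.g. via the Trotter–Kato theorem / Stone's theorem, of Proposition~\ref{resolvent cv}; one should cite \cite{RS72}). Combined with $u_0^\eps\to u_0$ strongly in $\Ltwo$ and the boundedness of $S^*$, an induction on $k$ gives strong convergence $(\eee^{-2itL_{u_0^\eps}}S^*)^k u_0^\eps\to(\eee^{-2itL_{u_0}}S^*)^k u_0$ in $\Ltwo$ (at each step one writes $A_\eps B_\eps v_\eps - ABv = A_\eps B_\eps(v_\eps-v) + A_\eps(B_\eps-B)v + (A_\eps-A)Bv$ and uses uniform boundedness plus strong convergence of each piece; here $B_\eps=\eee^{-2itL_{u_0^\eps}}$, $A_\eps=S^*$ is constant). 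Summing the series with the uniform geometric tail then yields, for every fixed $z\in\D$ and $t\in\R$,
\[
\ps{(\Id-z\eee^{-it}\eee^{-2itL_{u_0^\eps}}S^*)^{-1}u_0^\eps}{1}\;\longrightarrow\;\ps{(\Id-z\eee^{-it}\eee^{-2itL_{u_0}}S^*)^{-1}u_0}{1}\,,
\]
which is exactly pointwise convergence $u^\eps(t,z)\to u(t,z)$ inside $\D$, with $u(t,\cdot)$ defined by \eqref{formule explicite limite u}; that $u(t,\cdot)$ lies in $L^2_+(\T)$ follows because $u^\eps(t)$ is bounded in $\Ltwo$ (the $L^2$-norm is conserved by the flow on $H^2_+$, Theorem~\ref{GWP k geq 4}, so $\|u^\eps(t)\|_{L^2}=\|u_0^\eps\|_{L^2}\to\|u_0\|_{L^2}$) and the map $u\mapsto(\fr u(k))$ identifies an $L^2_+$-bounded net with a net of Taylor coefficients; any weak-$*$ limit has the prescribed coefficients.

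To upgrade pointwise-in-$z$ convergence to weak convergence in $\Ltwo$, note that $(u^\eps(t))_\eps$ is bounded in $\Ltwo$, hence weakly precompact; any weak limit point $v$ satisfies $\ps{v}{\eee^{ikx}}=\lim_\eps\fr{u^\eps}(t,k)$, and $\fr{u^\eps}(t,k)$ is the $k$-th Taylor coefficient of $u^\eps(t,\cdot)$ at $0$, which converges to the $k$-th Taylor coefficient of $u(t,\cdot)$ by the pointwise convergence on $\D$ together with the uniform bound (e.g. Cauchy estimates on a circle $\va z=r<1$ combined with dominated convergence, or simply reading off the $k$-th term of the convergent Neumann series). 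Hence every weak limit point equals $u(t)$, so the whole net converges: $u^\eps(t)\rightharpoonup u(t)$ in $\Ltwo$. Uniqueness of the limit potential is automatic, since $u(t)$ is given by the closed formula \eqref{formule explicite limite u} depending only on $u_0$, not on the approximating sequence.

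The main obstacle is the justification that strong resolvent convergence transfers to strong convergence of the unitary groups $\eee^{-2itL_{u_0^\eps}}$ uniformly on compact time intervals, and its interplay with the (constant but non-normal) factors $S^*$ in the product $(\eee^{-2itL_{u_0^\eps}}S^*)^k$; once that is in hand the rest is bookkeeping with the uniformly geometrically convergent Neumann series. A secondary point requiring care is that $S^*$ does \emph{not} commute with $\eee^{-2itL_{u_0^\eps}}$, so one cannot simplify the product and must genuinely iterate the three-term splitting above — but since $S^*$ is $\eps$-independent this iteration is clean.
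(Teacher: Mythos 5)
Your proposal is correct and follows essentially the same route as the paper: weak compactness of $(u^\eps(t))$ from the conserved $L^2$--norm, the explicit formula of Proposition~\ref{The inverse dynamical formula}, and the strong resolvent convergence $L_{u_0^\eps}\to L_{u_0}$ of Proposition~\ref{resolvent cv} (hence strong convergence of $\eee^{-2itL_{u_0^\eps}}$) to identify the unique limit $u(t,z)$ pointwise in $\D$. The only difference is that you spell out, via the uniformly convergent Neumann series and the Cauchy-estimate identification of Taylor/Fourier coefficients, the limit passage that the paper states in one line.
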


	\begin{proof}
		By the conservation of the $L^2$--norm (Lemma~\ref{loi de conservation}),  
		$$
		\|u^\eps(t)\|_{L^2}=\|u_0^\eps\|_{L^2}\lesssim \|u_0\|_{L^2}\,, \qquad \qlq \eps\ll1\,.
		$$
		Then, $\qlq t\in\R\,,$ $\e\, u^*_t\in L^2_+(\T)$ such that   
		\be\label{u* in L2}
		u^{\eps}(t)\rightharpoonup  u^*_t \text{ in } \Ltwo\,,
		\qquad\text{ and }
		\qquad
		\|u_t^*\|_{L^2}\lesssim\|u_0\|_{L^2}\,.
		\ee
  Let \[
		u(t,z)=\ps{(\Id-z\eee^{-it}\eee^{-2itL_{u_0}}S^*)^{-1}\,u_0}{1}\,,\qquad \qlq \,z\in \D\,,
		\]
   and recall  by Proposition~\ref{The inverse dynamical formula}\,,
		\be\label{dynamical explicit u-eps}
		u^\eps(t,z)=\ps{(\Id-z\eee^{-it}\eee^{-2itL_{u_0^\eps}}S^*)^{-1}\,u_0^\eps}{1}\,,\qquad \qlq \,z\in \D\,.
		\ee
  Our goal is to prove that for all $t\in\R\,,$ $z\in \D\,,$ one has $u^\eps(t,z)\longrightarrow u(t,z)\,,$ and thus, by the uniqueness of the limit, one can conclude that $u^*_t$ is a well--defined function on $\Ltwo$,  given as a holomorphic function on $\D$ by $u(t,z)\,.$ 
  Indeed,  by Proposition~\ref{resolvent cv}\,, $L_{\uzeroeps}\to L_{u_0}$ in the strong resolvent sense as $\uzeroeps\to u_0$ in $\Ltwo\,$. Thus, for any bounded continuous functions~$f$\,,  we have $f(L_{u_0^\eps})\to f(L_{u_0})$ in the strong operator topology  \cite[Proposition 10.1.9]{deO09}\,. In particular for all $t\in\R\,,$ and  for $f(x)=\eee^{-2ixt}$\,,
		$$
		\eee^{-2itL_{u_0^\eps}}\longrightarrow \eee^{-2itL_{u_0}}\,
		$$
		 strongly as $\eps\to0\,.$
		Therefore, passing to the limit in \eqref{dynamical explicit u-eps}\,, we deduce 
		\[
		u^\eps (t,z)\longrightarrow u(t,z)\,,\qquad \eps\to 0\,, \quad \qlq \,z\in \D\,,\, t\in\R\,.
		\]
	\end{proof}

	\subsection{ Strong convergence in \texorpdfstring{$\Ltwo$}{L2+} and conservation of the \texorpdfstring{$L^2$}{L2}-mass} 
 \label{Conservation of L2 mass}
 Our aim in this subsection is to prove Theorem~\ref{extension du flot a L2}\,.
	In light of the previous subsection, it remains to have
	\be
	\label{conservation mass L2}
	\|u(t)\|_{L^2}=\|u_0\|_{L^2}\,,
	\ee
	in order to guarantee the strong convergence of $u^\eps(t)\to u(t)$ in $L^2_+(\T)\,,$  as when $\eps\to 0\,,$
	\[
	\|u^\eps(t)\|_{L^2}=\|u^\eps_0\|_{L^2}\longrightarrow \|u_0\|_{L^2}\;.
	\]
	The main idea to prove~\eqref{conservation mass L2} is to use Parseval's identity on $u(t)$\,, where $u(t)$ is written in a suitable evolving  $L^2_+$--basis $(f_n^{\,t})\,,$ and satisfying 
	\be\label{égalité module birkhoff coord}
	\va{\ps{u(t)}{f_n^{\,t}}}=\va{\ps{u_0}{\fnzero}}\,, \qquad \qlq n\in\Nzero\,.
	\ee
	
	\begin{defi}[An orthonormal basis of $\Ltwo$]
		\label{definition-evolving orthonormal Basis}
		For all $\eps>0$\,, let $u^\eps\in \mathcal C(\R,\Htwo)\,.$ We denote by $(f_n^{\,\eps,t})$ the evolving orthonormal basis of $L^2_+(\T)$ along the curve $t\mapsto u^\eps(t)$ and satisfying the Cauchy problem 
		$$
		\begin{cases}
			\p_t \fnepst=B_{u^\eps(t)} \fnepst
			\\
			{\fnepst}_{|_{t=0}}=\fnzeroeps
		\end{cases}\,,
		$$
		for all $n$\,, where $(\fnzeroeps)$ is the orthonormal basis of $\Ltwo$ constituted from the eigenfunctions of the self--adjoint Lax operator $L_{u_0^\eps}\,,$ and $B_{u^\eps(t)}$ is the skew--adjoint operator defined in \eqref{Lax operators}\,.
	\end{defi}
	
	\begin{Rq}\label{fneps eigenvectors of Lueps}~
		\begin{enumerate}
			\item Since for all $\eps>0\,,$  $B_{u^\eps(t)}$ is a skew-adjoint bounded operator (cf. Proposition~\ref{Lu self-adjoint}) then the orthogonality of the $(\fnepst)$ is conserved in time. Indeed, for all $t\in\R\,,$
			$$
			\p_t\ps{\fnepst}{f_m^{\,\eps,t}}=\ps{B_u \fnepst}{f_m^{\,\eps,t}}+\ps{\fnepst}{B_u f_m^{\,\eps,t}}=0\,,
			$$
			\item By \cite[Lemma 4.1]{Ku06}\,, such orthonormal basis 
			is formed by the eigenfunctions of the Lax operator $L_{u^\eps(t)}$\,. Typically, we have for all $\eps>0\,,$  for all $t\in\R\,,$  
			$$
			L_{u^\eps(t)}\fnepst=\la_n(u_0^\eps)\fnepst\,.
			$$
		\end{enumerate}
	\end{Rq}
	
	\vskip0.3cm
	\noindent
	With this choice of $\Ltwo$--basis, we have a nice description of the evolution of the coordinates of $u^\eps(t)$\,. This is the aim of the next Lemma.
	
	\begin{lemma}\label{coordonnees-evolution}
		For all $\eps>0\,,$ let $u^\eps\in \mathcal C(\R, \Htwo)\,$ solution of \eqref{CS+}\,. Under the same notation of Definition~\ref{definition-evolving orthonormal Basis}\,, we have for any $n\in\Nzero$\,,
		\be
		\ps{u^\eps(t)}{\fnepst}
		=\ps{\uzeroeps}{\fnzeroeps}\,\eee^{-i\la_n(\uzeroeps)^2\,t}\,.
		\ee
	\end{lemma}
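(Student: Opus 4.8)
The goal is to show that the coordinate $\langle u^\eps(t)\mid\fnepst\rangle$ evolves by the explicit phase $\eee^{-i\la_n(\uzeroeps)^2 t}$. The natural strategy is to differentiate this inner product in $t$ and to use the two Cauchy problems at our disposal: the equation $\p_t u^\eps = B_{u^\eps}u^\eps - iL_{u^\eps}^2 u^\eps$ from Lemma~\ref{Reformulation de CS à l'aide des op. de Lax}, and the defining ODE $\p_t\fnepst = B_{u^\eps(t)}\fnepst$ from Definition~\ref{definition-evolving orthonormal Basis}. First I would compute
\[
\frac{d}{dt}\ps{u^\eps(t)}{\fnepst}
=\ps{\p_t u^\eps(t)}{\fnepst}+\ps{u^\eps(t)}{\p_t\fnepst}
=\ps{B_{u^\eps}u^\eps - iL_{u^\eps}^2u^\eps}{\fnepst}+\ps{u^\eps}{B_{u^\eps}\fnepst}\,.
\]
Since $B_{u^\eps}$ is skew-adjoint (Proposition~\ref{Lu self-adjoint}), the two $B_{u^\eps}$-terms cancel: $\ps{B_{u^\eps}u^\eps}{\fnepst}=-\ps{u^\eps}{B_{u^\eps}\fnepst}$. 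This leaves $\frac{d}{dt}\ps{u^\eps(t)}{\fnepst}=-i\ps{L_{u^\eps}^2u^\eps}{\fnepst}$.

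Next I would move $L_{u^\eps}^2$ onto the second factor using self-adjointness of $L_{u^\eps}$ (Proposition~\ref{Lu self-adjoint}), and then invoke Remark~\ref{fneps eigenvectors of Lueps}, point (2): $\fnepst$ is an eigenfunction of $L_{u^\eps(t)}$ with eigenvalue $\la_n(\uzeroeps)$, this eigenvalue being time-independent by the isospectral property (Remark~\ref{val propres conservées}). Hence $L_{u^\eps}^2\fnepst=\la_n(\uzeroeps)^2\fnepst$, and
\[
\frac{d}{dt}\ps{u^\eps(t)}{\fnepst}=-i\la_n(\uzeroeps)^2\ps{u^\eps(t)}{\fnepst}\,.
\]
This is a linear scalar ODE in $t$ with constant coefficient, so its solution is $\ps{u^\eps(t)}{\fnepst}=\ps{u^\eps(0)}{f_n^{\,\eps,0}}\eee^{-i\la_n(\uzeroeps)^2 t}=\ps{\uzeroeps}{\fnzeroeps}\eee^{-i\la_n(\uzeroeps)^2 t}$, which is the claimed identity.

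**Main obstacle.** The computation itself is short; the only real point requiring care is the justification that the various pairings and the differentiation under the inner product are legitimate. Since $u^\eps\in\mathcal C(\R,\Htwo)$ and $B_{u^\eps}$ is a bounded operator on $\Ltwo$ with $L_{u^\eps}^2$ acting continuously on $\Htwo$, all terms $\ps{B_{u^\eps}u^\eps}{\fnepst}$ and $\ps{L_{u^\eps}^2 u^\eps}{\fnepst}$ are well-defined and depend continuously on $t$, and the curve $t\mapsto\fnepst$ is $C^1$ into $\Ltwo$ by its defining ODE; so $t\mapsto\ps{u^\eps(t)}{\fnepst}$ is $C^1$ and the product rule applies. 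One should also note that the eigenvalue in Remark~\ref{fneps eigenvectors of Lueps} is indeed $\la_n(\uzeroeps)$ and not $\la_n(u^\eps(t))$ precisely because these agree for all $t$ by isospectrality — this is the conceptual heart of why the phase is constant-rate. With these verifications in place the statement follows.
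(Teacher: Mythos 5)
Your proposal is correct and follows essentially the same route as the paper's proof: differentiate $\ps{u^\eps(t)}{\fnepst}$, use $\p_t u^\eps = B_{u^\eps}u^\eps - iL_{u^\eps}^2u^\eps$ together with the defining ODE for $\fnepst$, cancel the $B_{u^\eps}$--terms by skew--adjointness, and use self--adjointness of $L_{u^\eps}$ plus the fact that $\fnepst$ is an eigenfunction with the time--independent eigenvalue $\la_n(\uzeroeps)$ to reduce to a scalar ODE. The additional remarks on regularity and the legitimacy of differentiating under the inner product are sound but not a departure from the paper's argument.
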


	\begin{proof}
		By Lemma~\ref{Reformulation de CS à l'aide des op. de Lax}\,, and since $L_{u^\eps(t)}$ and $B_{u^\eps(t)}$ are respectively self--adjoint and skew--adjoint operators
		\begin{align*}
			\p_t\ps{u^\eps(t)}{\fnepst}
			=&\,\ps{B_{u^\eps(t)}u^\eps(t) \,-i\,L^2_{u^\eps(t)}u^\eps(t)}{\fnepst}+\ps{{u^\eps(t)}}{B_{u^\eps(t)}\fnepst}
			\\
			=&\,-i\la_n^2(u_0^\eps)\,\ps{u^\eps(t)}{\fnepst}\,,
		\end{align*}
		which leads to the statement.
	\end{proof}
	
	\textsc{Consequence.} From the previous lemma, we infer for all $\eps>0$\,, $n\in\Nzero\,,$
	$$
	\va{\ps{u^\eps(t)}{\fnepst}}
	=\va{\ps{\uzeroeps}{\fnzeroeps}}\,.
	$$
	At this stage, we want to take $\eps\to0$ in the latter identity in order to deduce \eqref{égalité module birkhoff coord}\,. 
	However, one first might ask two questions :
	\begin{enumerate}[I.]
		\item Does the orthonormal basis $(\fnzeroeps)$ constituted from the eigenfunctions of the self--adjoint Lax operator $L_{u^\eps_0}$ remains an orthonormal basis of $\Ltwo$ under the limit $\eps\to0$\,?
		\item Suppose that the answer to the former question is affirmative, and denote  by $(\fnzero)$ this orthonormal basis limit. Based on Definition~\ref{definition-evolving orthonormal Basis}\,, 
		could we construct a time--evolving orthonormal basis, coinciding at $t=0$ with $(\fnzero)\,,$ and inducing a nice evolution as in Lemma~\ref{coordonnees-evolution} of the coordinates of $u$  in this basis ? 
		A priori, the operator $B_u$ defined in~\eqref{Lax operators}
		is not well--defined when $u\in \mathcal C_t[L^2_+(\T)]_x\,.$ Therefore, we should find another way to circumvent this problem.
	\end{enumerate}

	\vskip0.25cm
	The following proposition aims to answer question I. and to characterize the eigenfunctions of $L_{u_0}$ for $u_0\in\Ltwo$, by finding a uniform bound on the growth of the Sobolev norm $\|\fnzeroeps\|_{H^\frac12}$. For the second question II., we avoid the problem of defining $(\fnt)$ via Definition~\ref{definition-evolving orthonormal Basis} by using the same strategy done in the previous subsection,  that is, we
	characterize the limit 
	$\fnt\,,$ for all $t\in\R\,.$ Therefore, we should derive an explicit formula of $\fnepst\,,$  for all $\eps>0$\, in order to pass to the limit. Unfortunately, we won't directly obtain that the limit $(\fnt)$ forms an orthonormal basis of $\Ltwo\,.$ However, it shall be an orthonormal family in $\Ltwo\,$, which will be sufficient to conclude.

	\begin{prop}
		\label{fn[0]}
		Given $u_0\in L^2_{+}(\T)$ there exists a sequence $(\fnzero)\subseteq \mrm{Dom}(L_{u_0})$,
		such that for any sequence $(\uzeroeps) \subseteq H^2_+(\T)\,,$ 
		$
		u_0^\eps\to u_0 
		$ in $\Ltwo\,,$
		we have up to a subsequence
		$$
		\lim_{\eps\to0}\|\fnzeroeps-\fnzero\|_{L^2}= 0\,,
		\quad \qlq n\in\Nzero\,.
		$$ 
		In addition, for all $n\,,$
		$$
		L_{u_0}\fnzero=\la_n(u_0)\fnzero\,.
		$$
		
	\end{prop}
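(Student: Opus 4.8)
The plan is: first derive a bound on $\|\fnzeroeps\|_{H^{1/2}}$ that is uniform in $\eps$, then extract a subsequence along which $\fnzeroeps$ converges weakly in $H^{1/2}_+(\T)$ and strongly in $\Ltwo$, and finally identify the limit as an eigenfunction of $L_{u_0}$ by passing to the limit in the quadratic form.

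\emph{Step 1 (uniform $H^{1/2}$ bound).} By Remark~\ref{fneps eigenvectors of Lueps}, for each fixed $n$ and each $\eps$ the vector $\fnzeroeps$ is an $L^2$--normalized eigenfunction of $L_{\uzeroeps}=D-T_{\uzeroeps}T_{\uzeroepsbar}$ with eigenvalue $\la_n(\uzeroeps)$. Pairing the eigenvalue relation with $\fnzeroeps$ gives $\|\fnzeroeps\|_{\dot H^{1/2}}^2=\la_n(\uzeroeps)+\Ldeux{T_{\uzeroepsbar}\fnzeroeps}^2$. Since $(\uzeroeps)$ converges in $\Ltwo$ it is contained in a fixed compact subset, so inequality~\eqref{perturbation} applies with a \emph{single} constant $N=N_{1/2}$ independent of $\eps$, namely $\Ldeux{T_{\uzeroepsbar}\fnzeroeps}^2<\tfrac12\big(\|\fnzeroeps\|_{\dot H^{1/2}}^2+1\big)+2N^2\|\uzeroeps\|_{L^2}^2$. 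Absorbing the $\dot H^{1/2}$--term into the left side and using that $\la_n(\uzeroeps)$ is bounded (by Proposition~\ref{Lipschitz} / Corollary~\ref{caracterisation spectre Lu0}) and that $\|\uzeroeps\|_{L^2}$ is bounded, we obtain $\sup_{\eps}\|\fnzeroeps\|_{H^{1/2}}<\infty$ for every fixed $n$.

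\emph{Step 2 (compactness).} The embedding $H^{1/2}_+(\T)\hookrightarrow\Ltwo$ is compact (Rellich), so from Step~1, after a diagonal extraction over $n\in\Nzero$, there is a subsequence (not relabelled) such that for every $n$, $\fnzeroeps\rightharpoonup\fnzero$ in $H^{1/2}_+(\T)$ and $\fnzeroeps\to\fnzero$ in $\Ltwo$. In particular $\|\fnzero\|_{L^2}=1$, and the relations $\ps{\fnzeroeps}{\fmzeroeps}=\delta_{nm}$ pass to the strong $L^2$--limit, so $(\fnzero)$ is orthonormal in $\Ltwo$. Then write the eigenvalue relation weakly: for all $g\in H^{1/2}_+(\T)$, $\ps{D^{1/2}\fnzeroeps}{D^{1/2}g}-\ps{T_{\uzeroepsbar}\fnzeroeps}{T_{\uzeroepsbar}g}=\la_n(\uzeroeps)\ps{\fnzeroeps}{g}$. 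The first term converges to $\ps{D^{1/2}\fnzero}{D^{1/2}g}$ by weak convergence in $H^{1/2}_+$. For the second, $T_{\uzeroepsbar}g\to T_{\bar u_0}g$ strongly in $\Ltwo$ by Lemma~\ref{inegality Pi(u bar h)}, while $T_{\uzeroepsbar}\fnzeroeps=T_{\bar u_0}\fnzeroeps+T_{\uzeroepsbar-\bar u_0}\fnzeroeps\rightharpoonup T_{\bar u_0}\fnzero$ in $\Ltwo$: the first piece because $T_{\bar u_0}\colon H^{1/2}_+(\T)\to\Ltwo$ is bounded (Lemma~\ref{inegality Pi(u bar h)}), hence weakly continuous, and the second piece tends to $0$ in $\Ltwo$ since $\Ldeux{T_{\uzeroepsbar-\bar u_0}\fnzeroeps}\lesssim\|\fnzeroeps\|_{H^{1/2}}\Ldeux{\uzeroeps-u_0}\to0$ by Step~1. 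Finally $\la_n(\uzeroeps)\to\la_n(u_0)$ by Corollary~\ref{caracterisation spectre Lu0}.

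\emph{Step 3 (conclusion).} Passing to the limit we get $\mathcal Q_{u_0}(\fnzero,g)=\la_n(u_0)\ps{\fnzero}{g}$ for all $g\in H^{1/2}_+(\T)$. Since $\big|\la_n(u_0)\ps{\fnzero}{g}\big|\leq|\la_n(u_0)|\,\Ldeux{g}$, the definition \eqref{DomLu}--\eqref{Lu, u L2} of $L_{u_0}$ yields $\fnzero\in\mathrm{Dom}(L_{u_0})$ and $L_{u_0}\fnzero=\la_n(u_0)\fnzero$. This completes the proof. The main obstacle is the uniformity in Step~1 — the constant $N$ in \eqref{perturbation} must be independent of $\eps$, which is precisely why that estimate was set up uniformly on compact subsets of $\Ltwo$; once it is secured, the absorption and the weak--limit passages are routine. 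Note also that $(\fnzero)$ may a priori depend on the extracted subsequence, but what is needed downstream is only that it is an orthonormal family of eigenfunctions of $L_{u_0}$ (which, when $\sigma(L_{u_0})$ is simple, determines each $\fnzero$ up to a unimodular phase).
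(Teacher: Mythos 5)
Your proposal is correct and follows essentially the same route as the paper: the eigenvalue identity $\|\fnzeroeps\|_{\dot H^{1/2}}^2=\la_n(\uzeroeps)+\|T_{\uzeroepsbar}\fnzeroeps\|_{L^2}^2$, the uniform-on-compacts inequality~\eqref{perturbation} to absorb and get an $\eps$--uniform $H^{1/2}$ bound, Rellich extraction, and passage to the limit in the weak form of the eigenvalue equation using Lemma~\ref{inegality Pi(u bar h)} and Proposition~\ref{Lipschitz} to identify $\fnzero$ as an eigenfunction of $L_{u_0}$ in the sense of \eqref{DomLu}--\eqref{Lu, u L2}. Your explicit splitting $T_{\uzeroepsbar}\fnzeroeps=T_{\bar u_0}\fnzeroeps+T_{\uzeroepsbar-\bar u_0}\fnzeroeps$ merely details a convergence the paper asserts directly, so there is no substantive difference.
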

	
	\begin{proof}
		By definition of 
		$L_{\uzeroeps}=D-T_{\uzeroeps}T_{\bar{u}_0^\eps}\,,$ and
		since $L_{\uzeroeps}\, \fnzeroeps= \la_n(\uzeroeps)\fnzeroeps\,$,  it follows  
		$$
		\la_n(\uzeroeps)+\|T_{\bar{u}_0^\eps}\fnzeroeps\|_{L^2}^2=\|\fnzeroeps\|_{\dot{H}^{\frac12}_+}^2\,,\qquad \qlq n\geq0\,.
		$$
		Note that as $\uzeroeps\to u_0$ in $\Ltwo$\,, and by applying inequality~\eqref{perturbation}, we infer  that $\e\,N\geq 0$ independent of $\eps\,,$ such that
		$$
		\lambda_{n}(u_{0}^{\varepsilon})
		+\frac{1}{2}\,\|\fnzeroeps\|_{\dot{H}^{\frac{1}{2}}}^{2}+2N^2\|u_{0}^{\varepsilon}\|_{L^2}+\frac12>\|\fnzeroeps\,\|_{\dot{H}^{\frac12}}^{2}\,.
		$$
		Hence, by Proposition~\ref{Lipschitz} and since $\uzeroeps\to u_0$ in $\Ltwo$\,, 
		\be\label{borne to fn-uzeroeps-H1/2}
		\|\fnzeroeps\|_{\dot{H}^{1/2}}^2\,\lesssim \la_n(u_0)+\|u_0\|_{L^2}^2 \,.
		\ee
		Therefore, up to a subsequence, $\e\, (f^0_n)$ such that, as $\eps\to0$\,,
		\be\label{fn[0,eps] cv dans H12 et L2}
		\fnzeroeps\rightharpoonup f^0_n\text { in }H^{\frac12}_+(\T)
		\qquad\text{ and }\qquad
		\fnzeroeps\to f^0_n \text{ in } L^2_+(\T)\,.
		\ee
		\vskip0.15cm
		
		At present, for the second part of the proof we show that the $(\fnzero)$ are eigenfunctions of $L_{u_0}\,.$
		Note that  by Lemma~\ref{inegality Pi(u bar h)}\,, one can directly check that $(f_n^0)\subseteq \mrm{Dom}(L_{u_0})$\, where $\mrm{Dom}(L_{u_0})$ was defined in \eqref{DomLu}. Besides, by definition of $L_{u_0^\eps}$\,, we have for all $g\in H^\frac12_+(\T)\,,$
		\be\label{ps Lu0-eps g}
		\langle D^\frac{1}{2}\fnzeroeps\mid D^\frac{1}{2} g\rangle-\ps{T_{\uzeroepsbar}\fnzeroeps}{T_{\uzeroepsbar}g} =\la_n(\uzeroeps)\ps{\fnzeroeps}{g}\,, 
		\ee
		where by Lemma~\ref{inegality Pi(u bar h)} $\ T_{\uzeroepsbar}g\longrightarrow T_{\bar u_0}g$ in $\Ltwo\,,$ by Proposition~\ref{Lipschitz} $\la_n(u_0^\eps)\to\la_n(u_0)\,,$ and by \eqref{fn[0,eps] cv dans H12 et L2} $\ T_{\uzeroepsbar}\fnzeroeps\rightharpoonup T_{\bar u_0}\fnzero\,.$
		Hence, passing to the limit in \eqref{ps Lu0-eps g}\,, we infer
		$$
		\ps{L_{u_0}\fnzero}{g}=\la_n(u_0)\ps{\fnzero}{g}\,,\quad\qlq g\in H^{\frac12}_+(\T)\,,
		$$
		leading to $L_{u_0}\fnzero=\la_n(u_0)\fnzero$ for all $n\geq 0\,,$ where $(\la_n(u_0))$ denotes all the spectrum of $L_{u_0}$ by Corollary~\ref{caracterisation spectre Lu0}\,.
		
	\end{proof}

	In the sequel, thanks  to Corollary~\ref{caracterisation spectre Lu0} and Proposition~\ref{fn[0]}\,,
	we denote by $(\fnzero)$ the orthonormal basis of $L^2_+(\T)$ made up of the eigenfunctions of $L_{u_0}$  obtained in the previous proposition.
	The following lemma aims to give an explicit formula to the $(\fnepst)$ defined in Definition~\ref{definition-evolving orthonormal Basis} in order to characterize at a second stage their limits when $\eps\to0 $.
	
	\begin{lemma}[The explicit formula of $\fnepst$]\label{explicit formula fneps}
		Under the same notation of Definition~\ref{definition-evolving orthonormal Basis}\,, we have for all $\eps>0\,,$ $t\in\R\,,$
		\be\label{dynamical explicit fnepst}
		\fnepst(z)=\ps{\Big(\Id-z\eee^{-it(L_{\uzeroeps}+\Id)^2}S^*\eee^{itL_{\uzeroeps}^2}\Big)^{-1}f_n^{\,\eps,0}}{\eee^{-itL^2_{\uzeroeps}}1}\,,\qquad \qlq \,z\in \D\,.
		\ee
	\end{lemma}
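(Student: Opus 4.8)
The plan is to imitate, essentially verbatim, the proof of Proposition~\ref{The inverse dynamical formula}, the only simplification being that $\fnepst$ is, by Definition~\ref{definition-evolving orthonormal Basis} itself, the image of $\fnzeroeps$ under the unitary propagator generated by $B_{\ueps(t)}$, so that no auxiliary ordinary differential equation is needed for the ``numerator''.

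First I would record the Hardy--space expansion: since $\fnepst\in\Ltwo$ it extends holomorphically to $\D$, and since $\|S^*\|\le1$ the Neumann series $\sum_{k\ge0}(zS^*)^k=(\Id-zS^*)^{-1}$ converges in operator norm on $\Ltwo$ for every $z\in\D$; exactly as in \eqref{formule inversion spectral Hardy} this yields
\[
\fnepst(z)=\sum_{k\ge0}\widehat{\fnepst}(k)\,z^k=\big\langle(\Id-zS^*)^{-1}\fnepst\mid1\big\rangle\,,\qquad\qlq z\in\D\,.
\]
Next, let $U^\eps(t)$ solve $\frac{d}{dt}U^\eps(t)=B_{\ueps(t)}U^\eps(t)$ with $U^\eps(0)=\Id$; this Cauchy problem is well posed and produces a unitary operator because, along $t\mapsto\ueps(t)\in\Htwo$, the operator $B_{\ueps(t)}$ is bounded, skew--adjoint (Proposition~\ref{Lu self-adjoint}) and norm--continuous in $t$. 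Two consequences follow: by uniqueness for this linear equation together with Definition~\ref{definition-evolving orthonormal Basis}, $\fnepst=U^\eps(t)\fnzeroeps$, hence $U^\eps(t)^*\fnepst=\fnzeroeps$; and by the Lax equation of Proposition~\ref{The lax pair prop}, $\frac{d}{dt}\big(U^\eps(t)^*L_{\ueps(t)}U^\eps(t)\big)=0$, so $U^\eps(t)^*L_{\ueps(t)}U^\eps(t)=L_{\uzeroeps}$. Applying $U^\eps(t)^*$ to both entries of the inner product above and inserting $U^\eps(t)U^\eps(t)^*=\Id$ on either side of the resolvent, I would obtain
\[
\fnepst(z)=\Big\langle\big(\Id-z\,U^\eps(t)^*S^*U^\eps(t)\big)^{-1}\fnzeroeps\ \Big|\ U^\eps(t)^*1\Big\rangle\,.
\]

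It then remains only to identify $U^\eps(t)^*1$ and $U^\eps(t)^*S^*U^\eps(t)$, and at this point the computation is line by line that of \eqref{U(t)*1}--\eqref{U(t)*S*U(t)}, with $u_0$ replaced by $\uzeroeps$: from $\frac{d}{dt}[U^\eps(t)^*1]=-iU^\eps(t)^*L_{\ueps(t)}^2 1=-iL_{\uzeroeps}^2[U^\eps(t)^*1]$ one gets $U^\eps(t)^*1=\eee^{-itL_{\uzeroeps}^2}1$, while differentiating $U^\eps(t)^*S^*U^\eps(t)$ and invoking first the second commutator identity of Lemma~\ref{commutator}, namely $[S^*,B_u]=i\big(S^*L_u^2-(L_u+\Id)^2S^*\big)$, and then the isospectral relation $U^\eps(t)^*L_{\ueps(t)}=L_{\uzeroeps}U^\eps(t)^*$, leads to the closed linear equation $\frac{d}{dt}[U^\eps(t)^*S^*U^\eps(t)]=i\big([U^\eps(t)^*S^*U^\eps(t)]L_{\uzeroeps}^2-(L_{\uzeroeps}+\Id)^2[U^\eps(t)^*S^*U^\eps(t)]\big)$, whose solution is $U^\eps(t)^*S^*U^\eps(t)=\eee^{-it(L_{\uzeroeps}+\Id)^2}S^*\eee^{itL_{\uzeroeps}^2}$. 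Substituting these two identities into the conjugated formula above produces exactly \eqref{dynamical explicit fnepst}.

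I do not expect a genuine obstacle: all the analytic work is already done in the proof of Proposition~\ref{The inverse dynamical formula} and in Lemma~\ref{commutator}. The two points deserving a word of care are the identity $\fnepst=U^\eps(t)\fnzeroeps$ — i.e.\ the well--posedness and unitarity of the propagator generated by the bounded, continuously varying skew--adjoint family $B_{\ueps(t)}$, which is also what legitimizes differentiating under the inner products, and which is guaranteed by $\ueps\in\mathcal C(\R,\Htwo)$ — and the slightly heavier bookkeeping in deriving the evolution equation for $U^\eps(t)^*S^*U^\eps(t)$, the single step where the $[S^*,B_u]$ identity of Lemma~\ref{commutator} actually enters.
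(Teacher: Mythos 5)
Your proposal is correct and follows essentially the same route as the paper: both express $\fnepst(z)=\ps{(\Id-zS^*)^{-1}\fnepst}{1}$ via the Neumann series, conjugate by the unitary propagator $U(t)$ of \eqref{Cauchy problem unitary operator} (using $U(t)^*\fnepst=\fnzeroeps$ from Definition~\ref{definition-evolving orthonormal Basis}), and then invoke the identities \eqref{U(t)*1} and \eqref{U(t)*S*U(t)}, with $u_0$ replaced by $\uzeroeps$, exactly as established in the proof of Proposition~\ref{The inverse dynamical formula} through Lemma~\ref{commutator}. No gaps.
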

	
	\begin{proof}
		Like the proof of Proposition~\ref{The inverse dynamical formula}\,, we have
		$$
		\fnepst(z)=\ps{(\Id-zS^*)^{-1}\,\fnepst}{1},\qquad \qlq \,z\in \D\,.
		$$
		Using the unitary operator $U(t)$ introduced in \eqref{Cauchy problem unitary operator}\,, we deduce
		\begin{align}
			\fnepst(z)
			=&\,\left\langle U(t)^*\left(\Id-z \Sh^{*}\right)^{-1}\fnepst \mid U(t)^* 1\right\rangle
			\\
			=&\,\left\langle \,\left(\Id-z U(t)^*S^{*}U(t)\right)^{-1}\, U(t)^* \fnepst \mid U(t)^* 1\right\rangle\notag
			\\
			=&\,\left\langle \,\left(\Id-z U(t)^*S^{*}U(t)\right)^{-1} f_n^{\,\eps,0} \mid U(t)^* 1\right\rangle\,.\notag
		\end{align}
		By the formulae of \eqref{U(t)*1} and \eqref{U(t)*S*U(t)}\,, the explicit formula of $\fnepst$ follows.
	\end{proof}

	\begin{prop}\label{fn[t]}
		Let $u_0\in \B{1}\,.$ 
		Under the same notation of Definition~\ref{definition-evolving orthonormal Basis}\,, there exists an orthonormal family $(\fnt)$ of $\Ltwo$\,, such that for any sequence $(u_0^\eps)\subseteq H^2_+(\T)$\,, $u_0^\eps\to u_0$ in $\Ltwo$\,, we have up to a subsequence\,,
		$$
		\|\fnepst-\fnt\|_{L^2}\underset{\eps\to0}{\longrightarrow}0\,.
		$$
	\end{prop}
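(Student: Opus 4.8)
The plan is to follow the scheme of Propositions~\ref{caracterisation u} and~\ref{fn[0]}: establish a bound on $\|\fnepst\|_{H^{1/2}}$ that is uniform in \emph{both} $\eps$ and $t$, extract by compactness a subsequence converging strongly in $\Ltwo$, and identify the limit by passing to the limit in the explicit formula of Lemma~\ref{explicit formula fneps}. For the uniform bound, note first that $\|\fnepst\|_{L^2}=1$ for all $t$, since the Cauchy problem $\p_t\fnepst=B_{u^\eps(t)}\fnepst$ of Definition~\ref{definition-evolving orthonormal Basis} preserves the $L^2$ norm, $B_{u^\eps(t)}$ being skew--adjoint (Proposition~\ref{Lu self-adjoint}). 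Next, by Remark~\ref{fneps eigenvectors of Lueps} the eigenvalue is conserved along the flow, $L_{u^\eps(t)}\fnepst=\la_n(u_0^\eps)\fnepst$; writing $L_u=D-T_uT_{\bar u}$ and using that $T_{\bar u}$ is the adjoint of $T_u$, this yields
\[
\|\fnepst\|_{\dot H^{1/2}}^2=\la_n(u_0^\eps)+\big\|T_{\overline{u^\eps(t)}}\fnepst\big\|_{L^2}^2\,.
\]

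Applying the sharp Lemma~\ref{inegality Pi(u bar h)} with $h=\fnepst$ and $u=u^\eps(t)$, together with the conservation of the $L^2$ mass $\|u^\eps(t)\|_{L^2}=\|u_0^\eps\|_{L^2}$ (Lemma~\ref{loi de conservation}), gives
\[
\|\fnepst\|_{\dot H^{1/2}}^2\ \leq\ \la_n(u_0^\eps)+\|u_0^\eps\|_{L^2}^2\big(\|\fnepst\|_{\dot H^{1/2}}^2+1\big)\,.
\]
This is the only place where the hypothesis $u_0\in\B{1}$ is used: since $\|u_0^\eps\|_{L^2}\to\|u_0\|_{L^2}<1$, for $\eps$ small we have $\|u_0^\eps\|_{L^2}^2\leq c<1$, so the $\dot H^{1/2}$ term can be absorbed and $\|\fnepst\|_{\dot H^{1/2}}^2\leq(1-c)^{-1}(\la_n(u_0^\eps)+c)$, which is bounded uniformly in $t$ and $\eps$ by Proposition~\ref{Lipschitz}. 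Hence, for fixed $n$, $(\fnepst)_\eps$ is bounded in $H^{1/2}_+(\T)$, and by Rellich--Kondrachov, up to a subsequence, $\fnepst\rightharpoonup\fnt$ in $H^{1/2}_+(\T)$ and $\fnepst\to\fnt$ strongly in $\Ltwo$ for some $\fnt\in L^2_+(\T)$; a diagonal extraction makes the subsequence work for all $n$ at once.

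It remains to see that $\fnt$ depends only on $u_0$ and $t$, not on the extracted subsequence. Define $\fnt$ by the analogue of Lemma~\ref{explicit formula fneps},
\[
\fnt(z):=\ps{\big(\Id-z\,\eee^{-it(L_{u_0}+\Id)^2}S^*\eee^{itL_{u_0}^2}\big)^{-1}\fnzero}{\eee^{-itL_{u_0}^2}1}\,,\qquad z\in\D\,,
\]
where $\fnzero$ and $L_{u_0}$ are those of Proposition~\ref{fn[0]} (along the subsequence for which $\fnzeroeps\to\fnzero$ in $\Ltwo$). By Proposition~\ref{resolvent cv}, $L_{u_0^\eps}\to L_{u_0}$ in the strong resolvent sense, hence by the functional calculus \cite[Proposition~10.1.9]{deO09} the unitaries $\eee^{-it(L_{u_0^\eps}+\Id)^2}$ and $\eee^{itL_{u_0^\eps}^2}$ converge strongly; as $S^*$ is a contraction, the operators $A_\eps:=\eee^{-it(L_{u_0^\eps}+\Id)^2}S^*\eee^{itL_{u_0^\eps}^2}$ satisfy $\|A_\eps\|\leq1$ and $A_\eps\to A$ strongly. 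Since $\|zA_\eps\|\leq|z|<1$, the Neumann series give $(\Id-zA_\eps)^{-1}\to(\Id-zA)^{-1}$ strongly with operator norm $\leq(1-|z|)^{-1}$; combined with $\fnzeroeps\to\fnzero$ and $\eee^{-itL_{u_0^\eps}^2}1\to\eee^{-itL_{u_0}^2}1$ in $\Ltwo$, passing to the limit in Lemma~\ref{explicit formula fneps} yields $\fnepst(z)\to\fnt(z)$ for every $z\in\D$. Because the $\fnepst$ are holomorphic on $\D$ with $\|\fnepst\|_{L^2_+(\T)}=1$, their Taylor coefficients converge to those of $\fnt$, so any strong $\Ltwo$--limit of a subsequence of $(\fnepst)_\eps$ must equal $\fnt$; consequently $\fnepst\to\fnt$ in $\Ltwo$ along the subsequence of Proposition~\ref{fn[0]}. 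Finally $\|\fnt\|_{L^2}=\lim_\eps\|\fnepst\|_{L^2}=1$, and $\ps{\fnt}{f_m^{\,t}}=\lim_\eps\ps{\fnepst}{f_m^{\,\eps,t}}=\delta_{nm}$ since both factors converge strongly in $\Ltwo$; hence $(\fnt)_n$ is an orthonormal family of $\Ltwo$.

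The crux is the uniform-in-$t$ $H^{1/2}$ bound of the second paragraph: one must invoke the \emph{sharp} constant in Lemma~\ref{inegality Pi(u bar h)} rather than the perturbative estimate~\eqref{perturbation}, whose constant $N$ is only uniform over compact subsets of $\Ltwo$ and so not obviously uniform along the orbit; it is precisely the fact that this estimate can be closed using only the conserved $L^2$ mass and the strict inequality $\|u_0\|_{L^2}<1$ that forces the smallness hypothesis. The passage to the limit in the explicit formula is then a routine consequence of strong resolvent convergence.
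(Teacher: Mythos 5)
Your proposal is correct and follows essentially the same route as the paper: the uniform-in-$t$ $\dot H^{1/2}$ bound via the eigenvalue identity, the sharp Lemma~\ref{inegality Pi(u bar h)}, the conserved $L^2$ mass and the smallness $\|u_0\|_{L^2}<1$ (with Proposition~\ref{Lipschitz} controlling $\la_n(u_0^\eps)$), then Rellich--Kondrachov compactness, identification of the limit through the explicit formula of Lemma~\ref{explicit formula fneps} and the strong resolvent convergence of Proposition~\ref{resolvent cv}, and finally orthonormality by passing to the limit in $\ps{\fnepst}{f_m^{\,\eps,t}}=\delta_{nm}$. Your treatment merely spells out in more detail the operator-convergence step (Neumann series, contraction of $S^*$) and the reason the perturbative bound \eqref{perturbation} cannot be used uniformly in $t$, which matches the remark the paper makes after the proposition.
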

	
	\begin{proof}
		This proof is similar to the one done
		in Proposition~\ref{fn[0]}. However, it presents two main  differences. We will discuss these later in the upcoming remark. Now, coming back to the proof, recall by Proposition~\ref{caracterisation u}\,, there exists a unique $u(t)\in \Ltwo$ such that for any $\uzeroeps\to u_0$ in $L^2_+(\T)$ we have $u^\eps(t)\rightharpoonup u(t)$ in $L^2_+(\T)$ as $\eps\to 0\,.$
		Therefore, by definition of $L_{u^{\eps}(t)}=D-T_{u^{\eps}(t)}T_{\bar{u}^{\eps}(t)}$\,, and since $L_{u^{\eps}(t)}\fnepst=\la_n(u_0^\eps)\fnepst$ by the second point of Remark~\ref{fneps eigenvectors of Lueps}\,, 
		$$
		\la_n(\uzeroeps)+\|T_{\bar{u}^\eps(t)}\fnepst\|_{L^2}^2=\|\fnepst\|_{\dot{H}^{\frac12}}^2\;,  \qquad \qlq n\geq0\,.  
		$$
		Thus, applying Lemma~\ref{inegality Pi(u bar h)}\,,
		$$
		(1-\|u^\eps(t)\|_{L^2}^2)\,\|\fnepst\|_{\dot{H}^{\frac12}}
		\leq
		\|u^\eps(t)\|_{L^2}^2+\la_n(\uzeroeps)\,,\qquad\qlq n\geq 0\,.
		$$
		Taking $\eps$ small enough to guarantee $\|u^\eps(t)\|_{L^2}=\|u^\eps_0\|_{L^2}<1\,,$  we deduce by Proposition~\ref{Lipschitz}\,, for $\eps$ small 
		\be\label{borne H12}
		\|\fnepst\|_{\dot{H}^{1/2}}^2\,\lesssim \frac{\la_n(u_0)+\|u_0\|_{L^2}^2}{1-\|u_0\|_{L^2}^2}\,,\qquad \qlq n\geq0\,.
		\ee
		Hence, up to a subsequence,
		$$
		f_n^\eps(t)\rightharpoonup f_{n,\,t}^* \text{ in }H^{\frac{1}{2}}_+(\T)\,,
		\qquad
		f_n^\eps(t)\to f_{n,\,t}^* \text{ in }L^2_+(\T)\,.
		$$
		It remains to show that $f_{n,\,t}^*$ is uniquely characterized for all $t$. Using the  explicit formula of Lemma~\ref{explicit formula fneps}\,,
		$$
		\fnepst(z)=\ps{\Big(\Id-z\eee^{-it(L_{\uzeroeps}+\Id)^2}S^*\eee^{itL_{\uzeroeps}^2}\Big)^{-1}f_n^{\,\eps,0}}{\eee^{-itL^2_{\uzeroeps}}1}\,,\qquad \qlq \,z\in \D\,,
		$$ 
		and applying Proposition~\ref{resolvent cv}\,,
		one can conclude that there exists 
		\be\label{fnt explicit}
		\fnt(z)=\ps{\Big(\Id-z\eee^{-it(L_{u_0}+\Id)^2}S^*\eee^{itL_{u_0}^2}\Big)^{-1}\fnzero}{\eee^{-itL^2_{u_0}}1}\,,\qquad \qlq \,z\in \D\,,
		\ee
		where $(\fnzero)$ denotes the eigenfunctions of $L_{u_0}$ obtained in Proposition~\ref{fn[0]}\,.
		Therefore, the limit   $f_{n,t}^*=\fnt$ for all $t$ on $\D$.
		Finally, observe that since the $(\fnepst)$ is an orthonormal basis of $\Ltwo$ and as $\fnepst\to \fnt$ in $\Ltwo\,,$ then  $(\fnt)$ forms an orthonormal family in $L^2_+(\T)$\,. 
	\end{proof}
	
	\begin{Rq}
		There are two main differences between the proof of Proposition~\ref{fn[0]} and Proposition~\ref{fn[t]}\,:
		\begin{enumerate}[(i)]
			\item  First, note that in the last proof, we cannot control the growth of the Sobolev norm $\|\fnepst\|_{H^{\frac{1}{2}}}$ uniformly for all $t$ by using  the inequality~\eqref{perturbation}\,,  since the integer $N_\eta$ in \eqref{perturbation} is not uniform for all $t\in\R\,.$ As an alternative, we rely on Lemma \ref{inegality Pi(u bar h)}\,. Consequently, the condition of $\|u^\eps(t)\|_{L^2}<1$  for $\eps$ small enough, is crucial here in order to conclude.
			\item Second in the previous proof, we had to give a meaning to the limit $f_{n,t}^*$ by characterizing this limit for all $t\in\R\,.$ 
		\end{enumerate}
		A common feature about these two proofs is to obtain a uniform bounds on the growth of the Sobolev norm $H^\frac12_+(\T)$ of the eigenfunctions $\fnzeroeps$ and $\fnepst$ to be able to conclude.
		
	\end{Rq}
	
	In view of Proposition~\ref{Lipschitz}\,, Lemma~\ref{coordonnees-evolution}\,, Proposition~\ref{fn[0]} and Proposition~\ref{fn[t]}\,,  we infer the following lemma.
	\begin{lemma}
		\label{ps u,fn}
		Let $u_0\in \B{1}\,.$ There exists an orthonormal \underline{family} $(f_n^t)
		$ of $ L^2_+(\T)$
		such that for all $n\geq 0\,,$ 
		\be\label{ps u,fn identite}
		\ps{u(t)}{\fnt}=\ps{u_0}{\fnzero}\eee^{-it\la_n^2(u_0)}\,, \quad\qlq t\in\R\,.
		\ee
	\end{lemma}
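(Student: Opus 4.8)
The plan is to derive \eqref{ps u,fn identite} by passing to the limit $\eps\to0$ in the exact coordinate evolution already known for the smooth approximations, reusing the compactness and continuity statements assembled in this subsection.

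First I would fix a sequence $(\uzeroeps)\subseteq H^2_+(\T)$ with $\uzeroeps\to u_0$ in $L^2_+(\T)$ and set $u^\eps=\mathcal S^+(\cdot)\uzeroeps\in\mathcal C(\R,H^2_+(\T))$ by Theorem~\ref{GWP k geq 4}. For each such $u^\eps$, Lemma~\ref{coordonnees-evolution} gives, for every $n\in\Nzero$ and $t\in\R$,
\[
\ps{u^\eps(t)}{\fnepst}=\ps{\uzeroeps}{\fnzeroeps}\,\eee^{-it\la_n(\uzeroeps)^2}.
\]
Then I would take $(\fnzero)$ and $(\fnt)$ to be the limit families produced in Proposition~\ref{fn[0]} and Proposition~\ref{fn[t]}, extracted along one common subsequence of $\eps$'s, chosen diagonal over $n\in\Nzero$ so that $\fnzeroeps\to\fnzero$ in $L^2_+(\T)$ simultaneously for all $n$. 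Once this subsequence is fixed, the explicit formula~\eqref{fnt explicit} together with the strong resolvent convergence of Proposition~\ref{resolvent cv} forces $\fnepst\to\fnt$ in $L^2_+(\T)$ along the same subsequence, for all $n$ and all $t$, and $(\fnt)$ is orthonormal by Proposition~\ref{fn[t]}.

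Next I would pass to the limit in the displayed identity. On the left-hand side, $u^\eps(t)\rightharpoonup u(t)$ weakly in $L^2_+(\T)$ by Proposition~\ref{caracterisation u}, while $\fnepst\to\fnt$ strongly in $L^2_+(\T)$; since $\|u^\eps(t)\|_{L^2}=\|\uzeroeps\|_{L^2}$ stays bounded, the pairing of a bounded weakly convergent sequence against a strongly convergent one converges, so $\ps{u^\eps(t)}{\fnepst}\to\ps{u(t)}{\fnt}$. On the right-hand side, $\uzeroeps\to u_0$ and $\fnzeroeps\to\fnzero$ strongly in $L^2_+(\T)$ give $\ps{\uzeroeps}{\fnzeroeps}\to\ps{u_0}{\fnzero}$, and $\la_n(\uzeroeps)\to\la_n(u_0)$ by the Lipschitz continuity of Proposition~\ref{Lipschitz}, hence $\eee^{-it\la_n(\uzeroeps)^2}\to\eee^{-it\la_n(u_0)^2}$. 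Equating the two limits yields \eqref{ps u,fn identite}.

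The argument is essentially a bookkeeping of modes of convergence, so I do not expect a genuine obstacle. The one delicate point is the coherence of the subsequence extraction: the same subsequence of $\eps$'s must produce the limits $(\fnzero)$ and $(\fnt)$ for every $t$ and every $n$ at once. This is resolved by a single diagonal extraction over $n$ at time $t=0$, after which the explicit formula~\eqref{fnt explicit} and Proposition~\ref{resolvent cv} propagate the convergence to all $t$ with no further extraction needed.
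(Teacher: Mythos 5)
Your proposal is correct and follows essentially the same route as the paper, which deduces the lemma directly by combining Lemma~\ref{coordonnees-evolution} with the convergences of Proposition~\ref{fn[0]}, Proposition~\ref{fn[t]} and Proposition~\ref{Lipschitz}, passing to the limit in the identity $\ps{u^\eps(t)}{\fnepst}=\ps{\uzeroeps}{\fnzeroeps}\eee^{-it\la_n(\uzeroeps)^2}$ exactly as you describe. The only cosmetic remark is that the strong $L^2$ convergence of $\fnepst$ for each $t$ rests on the uniform $H^{1/2}$ bound~\eqref{borne H12} (Rellich--Kondrachov) with the explicit formula~\eqref{fnt explicit} serving to identify the limit uniquely, which is precisely the content of Proposition~\ref{fn[t]} that you invoke.
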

	
	\vskip0.25cm
	We are, at this stage, in a position to prove Theorem~\ref{extension du flot a L2}\,.
	
	\begin{theorem*}\ref{extension du flot a L2}\textbf{.}
		Let $u_0\in\B{1}$\,. There exists a unique potential $u\in\mathcal C(\R, L^2_+(\T))$ 
		such that,
		for any sequence $(u_0^\varepsilon)\subseteq H^2_+(\T)\,,$
		$
		\|u_0^\eps-u_0\|_{L^2}\underset{\eps\to0}{\longrightarrow} 0\,,
		$
		the following convergence holds~: for all $T>0\,,$
		$$
		\sup_{t\in[-T,T]}\|u^\eps(t)-u(t)\|_{L^2}\to 0\,, \quad \eps\to 0\,.
		$$
		In addition, 
		\be\label{dynamical explicit formula L2}
		u(t,z)=\ps{(\Id-z\eee^{-it}\eee^{-2itL_{u_0}}S^*)^{-1}\,u_0}{1}\,,\qquad \qlq \,z\in \D\,.
		\ee
		Moreover, the $L^2$--norm of the limit potential $u$ is conserved 
		\[
		\|u(t)\|_{L^2}=\|u_0\|_{L^2}\,,\quad \qlq t\in \R.
		\]
	\end{theorem*}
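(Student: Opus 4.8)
The plan is to build on Proposition~\ref{caracterisation u}, which already produces the candidate $u$ --- given by the explicit formula \eqref{formule explicite limite u}, lying in $\Ltwo$ with $\Ldeux{u(t)}\lesssim\Ldeux{u_0}$, unique, and realized as the \emph{weak} limit $u^\eps(t)\rightharpoonup u(t)$ in $\Ltwo$ for each fixed $t$ --- and then to supply the three missing ingredients: the $L^2$--conservation, the time continuity $u\in\mathcal C(\R,\Ltwo)$, and the uniformity of the convergence on $[-T,T]$. \emph{Step~1 (mass conservation).} By Lemma~\ref{ps u,fn} there is an orthonormal family $(\fnt)$ of $\Ltwo$ with $\ps{u(t)}{\fnt}=\ps{u_0}{\fnzero}\eee^{-it\la_n(u_0)^2}$, where $(\fnzero)$ is the orthonormal \emph{basis} of $\Ltwo$ made of the eigenfunctions of $L_{u_0}$ (Proposition~\ref{fn[0]}, using that $L_{u_0}$ is self--adjoint with compact resolvent). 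Bessel's inequality for $(\fnt)$ and Parseval's identity for $(\fnzero)$ give $\Ldeux{u(t)}^2\geq\sum_{n\geq0}\va{\ps{u(t)}{\fnt}}^2=\sum_{n\geq0}\va{\ps{u_0}{\fnzero}}^2=\Ldeux{u_0}^2$, while weak lower semicontinuity of the norm together with $u^\eps(t)\rightharpoonup u(t)$ and the $L^2$--conservation along the $H^2$--flow (Lemma~\ref{loi de conservation}) gives $\Ldeux{u(t)}\leq\liminf_{\eps\to0}\Ldeux{u^\eps(t)}=\liminf_{\eps\to0}\Ldeux{u_0^\eps}=\Ldeux{u_0}$. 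Hence $\Ldeux{u(t)}=\Ldeux{u_0}$ for all $t$; and since $\Ltwo$ is Hilbert, $u^\eps(t)\rightharpoonup u(t)$ together with $\Ldeux{u^\eps(t)}=\Ldeux{u_0^\eps}\to\Ldeux{u_0}=\Ldeux{u(t)}$ upgrades automatically to the strong convergence $u^\eps(t)\to u(t)$ in $\Ltwo$, for each fixed $t$.

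\emph{Step~2 (continuity of $u$).} Fix $t$ and $t_k\to t$. In \eqref{formule explicite limite u} the operator $L_{u_0}$ is self--adjoint (Proposition~\ref{Lu self-adjoint}, extended to $u_0\in\Ltwo$ by the quadratic--form construction of Section~\ref{section: Extension du flot}), so $\eee^{-2it_kL_{u_0}}\to\eee^{-2itL_{u_0}}$ strongly by Stone's theorem; hence $u(t_k,z)\to u(t,z)$ for every $z\in\D$, i.e.\ the Taylor coefficients of the $u(t_k)$ converge. As $\Ldeux{u(t_k)}=\Ldeux{u_0}$ is bounded, this yields $u(t_k)\rightharpoonup u(t)$ in $\Ltwo$, and, combined once more with constancy of the norm, $u(t_k)\to u(t)$ strongly. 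Thus $u\in\mathcal C(\R,\Ltwo)$, and by the uniqueness of Proposition~\ref{caracterisation u} it is independent of the approximating sequence.

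\emph{Step~3 (uniform convergence --- the main obstacle).} I would argue by contradiction: if $\sup_{t\in[-T,T]}\Ldeux{u^\eps(t)-u(t)}$ does not tend to $0$, pick $\delta>0$, $\eps_k\to0$ and $t_k\in[-T,T]$ with $\Ldeux{u^{\eps_k}(t_k)-u(t_k)}\geq\delta$ and, up to a subsequence, $t_k\to t_*\in[-T,T]$. By Proposition~\ref{resolvent cv}, $L_{u_0^{\eps_k}}\to L_{u_0}$ in the strong resolvent sense; the key analytic input is that this forces the unitary groups $\eee^{-2isL_{u_0^{\eps_k}}}$ to converge strongly to $\eee^{-2isL_{u_0}}$ \emph{uniformly for $s$ in compact intervals} (a Trotter--Kato type statement, see e.g.\ \cite{RS72}), whence $\eee^{-2it_kL_{u_0^{\eps_k}}}u_0^{\eps_k}\to\eee^{-2it_*L_{u_0}}u_0$ in $\Ltwo$. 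Since the contractions $z\,\eee^{-it_k}\eee^{-2it_kL_{u_0^{\eps_k}}}S^*$ (of operator norm $\leq|z|<1$) then converge strongly to $z\,\eee^{-it_*}\eee^{-2it_*L_{u_0}}S^*$, so do their resolvents; evaluating the explicit formula of Proposition~\ref{The inverse dynamical formula} gives $u^{\eps_k}(t_k,z)\to u(t_*,z)$ for every $z\in\D$. With $\Ldeux{u^{\eps_k}(t_k)}=\Ldeux{u_0^{\eps_k}}\lesssim1$ this forces $u^{\eps_k}(t_k)\rightharpoonup u(t_*)$, and since $\Ldeux{u^{\eps_k}(t_k)}\to\Ldeux{u_0}=\Ldeux{u(t_*)}$ by Step~1, in fact $u^{\eps_k}(t_k)\to u(t_*)$ strongly; combined with $u(t_k)\to u(t_*)$ from Step~2, this gives $\Ldeux{u^{\eps_k}(t_k)-u(t_k)}\to0$, a contradiction.

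The one genuinely delicate point is thus the uniform--in--time strong convergence of the propagators $\eee^{-2isL_{u_0^\eps}}$ used in Step~3. A more hands--on alternative, bypassing it, is to work in the moving eigenbasis: by Lemma~\ref{coordonnees-evolution} and Remark~\ref{fneps eigenvectors of Lueps}, $u^\eps(t)=\sum_n\ps{u_0^\eps}{\fnzeroeps}\eee^{-i\la_n(u_0^\eps)^2t}\fnepst$ with $(\fnepst)$ an orthonormal basis, while $u(t)=\sum_n\ps{u_0}{\fnzero}\eee^{-i\la_n(u_0)^2t}\fnt$ (which follows from Step~1, since Parseval's equality puts $u(t)$ in the closed span of $(\fnt)$); one then splits at a frequency cutoff $N$, controlling the tail uniformly in $t$ via $\limsup_{\eps\to0}\sum_{n>N}\va{\ps{u_0^\eps}{\fnzeroeps}}^2=\sum_{n>N}\va{\ps{u_0}{\fnzero}}^2\to0$ as $N\to\infty$ (a consequence of $\Ldeux{u_0^\eps}\to\Ldeux{u_0}$), and the finitely many low modes via $\la_n(u_0^\eps)\to\la_n(u_0)$ (Proposition~\ref{Lipschitz}) together with $\sup_{t\in[-T,T]}\Ldeux{\fnepst-\fnt}\to0$; this last convergence would itself be read off the explicit formula of Lemma~\ref{explicit formula fneps} and the uniform $H^{\frac12}_+$--bound \eqref{borne H12}, since the $\fnepst(\,\cdot\,)$ are holomorphic on $\D$, uniformly bounded in $H^{\frac12}_+$, with Taylor coefficients converging uniformly in $t$.
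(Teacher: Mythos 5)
Your proposal is correct and follows essentially the same route as the paper: the explicit formula plus strong resolvent convergence to identify the limit, the orthonormal family $(\fnt)$ with Bessel/Parseval to recover the $L^2$--norm, and the weak--to--strong upgrade via conservation of the norm. The paper packages your Steps~2--3 as the single statement ``for any $t^\eps\to t$, $u^\eps(t^\eps)\to u(t)$'' (obtained by rerunning Proposition~\ref{caracterisation u} with $t^\eps$ in place of $t$), which implicitly relies on the same Trotter--Kato--type convergence of the propagators that you flag explicitly.
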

	
	\begin{proof}
		Let $(t^\eps)\subseteq \R$ such that $t^\eps\to t$  as $\eps\to 0$. 
		Since $\|u_0^\eps-u_0\|_{L^2}\to 0$\,, then for $\eps$ small enough
		$$
		\|u^\eps(t^\eps)\|_{L^2}=\|u_0^\eps\|_{L^2}\lesssim\|u_0\|_{L^2}\,.
		$$
		Hence, for any $t^\eps\to t$\,, there exists $ {u}_t^*\in L^2_+(\T)$ such that up to a subsequence,
		$u^\eps(t^\eps)\rightharpoonup u_t^*$ in  $L^2_+(\T)$ and
		\be
		\label{borne-limite-inf}
		\|u_t^*\|_{L^2}\leq \liminf_{\eps\to0} \|u^\eps(t^\eps)\|_{L^2} = \liminf_{\eps\to0} \|u^\eps_0\|_{L^2} =\|u_0\|_{L^2}\,.
		\ee
		Our goal is to show that $u^\eps(t^\eps)$ converges strongly in $\Ltwo\,.$ As a first step, we stress that the weak limit potential
		$u_t^*$ is uniquely characterized for all $t$\,, and is equal to a unique limit $u(t)$\,. For that, we repeat the same proof of Proposition~\ref{caracterisation u} by
		exchanging $t$ into $t^\eps$ 
		with $t^\eps\to t$, 
		and we obtain $u^\eps(t^\eps)\rightharpoonup u(t)$ in $\Ltwo\,,$ where  
  $u(t)$ 
		 is defined in equation~\eqref{formule explicite limite u}\,.
		Moreover,
		\be
		\label{leq}
		\|u(t)\|_{L^2}\leq \|u_0\|_{L^2}\,,
		\ee
		by \eqref{borne-limite-inf}. As a second step, we prove that this weak convergence in $\Ltwo$ is actually a strong convergence. 
		This can be achieved by checking  
		$$
		\Ldeux{u^\eps(t^\eps)}\to \Ldeux{u(t)}\,,\quad \eps\to 0\,.
		$$
		In fact, it is actually sufficient to prove that $\Ldeux{u(t)}=\Ldeux{u_0}$ since 
		\be\label{cv en norme}
		\|u^\eps(t^\eps)\|_{L^2}=\|u^\eps_0\|_{L^2}\longrightarrow \|u_0\|_{L^2}
		\,, \quad \text{as } \eps\to 0\,.
		\ee
		Thanks to \eqref{leq}\,, we already have 
		$\|u(t)\|_{L^2}\leq \|u_0\|_{L^2}\,.$ Now, to prove $\|u(t)\|_{L^2}\geq \|u_0\|_{L^2}\,,$  we use Lemma~\ref{ps u,fn} 
		to infer the existence of an orthonormal \textit{family} $(\fnt)$ of $L^2_+(\T)$ such that
		$$
		\sum_{n=0}^\infty \,\va{\ps{u(t)}{\fnt}}^2
		=\sum_{n=0}^\infty \,\va{\ps{u_0}{\fnzero}}^2
		=\|u_0\|_{L^2}^2\,.
		$$
		Hence, by Bessel's inequality 
		$$
		\|u(t)\|_{L^2}\geq \|u_0\|_{L^2}\,.
		$$
		As a conclusion, we have proved for any $t^\eps\to t$\,, $u^\eps(t^\eps)\to u(t)$ in $\Ltwo\,.$ This means, $u\in \mathcal C(\R,L^2_+(\T))$ and for all $T>0\,,$ 
		$$
		\sup_{t\in[-T,T]}\|u^\eps(t)-u(t)\|_{L^2}\to 0\,, \quad \eps\to 0\,.
		$$
	\end{proof}

	In view of the last Theorem, we denote through on, $u(t)$  the solution of \eqref{CS+} in $\Ltwo$ starting from an initial datum $u_0$ that lies inside the open ball $\mathcal{B}_{L^2_+}(1)$  of $L^2_+(\T)\,.$

	\begin{corollary}
		\label{ln[0]=ln[t]}
		The spectrum $\sigma(L_{u(t)})$ is invariant under the flow  of \eqref{CS+}\,.
	\end{corollary}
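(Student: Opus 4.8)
The plan is to transfer the isospectral property from the smooth approximants to the $L^2$ solution by a limiting argument, using the continuity of the eigenvalues established in Proposition~\ref{Lipschitz}. Fix $u_0\in\mathcal{B}_{L^2_+}(1)$ and a sequence $(u_0^\eps)\subseteq H^2_+(\T)$ with $u_0^\eps\to u_0$ in $\Ltwo$. For each $\eps>0$ the regularized solution $u^\eps=\mathcal S^+(\cdot)u_0^\eps$ lies in $\mathcal C(\R,H^2_+(\T))$ by Theorem~\ref{GWP k geq 4}, so the Lax pair of Proposition~\ref{The lax pair prop} applies on any interval $[-T,T]$, and the isospectral property (Remark~\ref{val propres conservées}) yields $\lambda_n(u^\eps(t))=\lambda_n(u_0^\eps)$ for every $n\in\Nzero$ and every $t\in\R$.

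Next I would pass to the limit $\eps\to 0$ in this identity. By Theorem~\ref{extension du flot a L2}, $u^\eps(t)\to u(t)$ strongly in $\Ltwo$ for each $t$, so the set $\{u^\eps(t)\,;\,\eps>0\}\cup\{u(t)\}$ is compact in $\Ltwo$; hence Proposition~\ref{Lipschitz} gives $\lambda_n(u^\eps(t))\to\lambda_n(u(t))$ as $\eps\to 0$. Likewise $\{u_0^\eps\,;\,\eps>0\}\cup\{u_0\}$ is compact, so $\lambda_n(u_0^\eps)\to\lambda_n(u_0)$. Passing to the limit in $\lambda_n(u^\eps(t))=\lambda_n(u_0^\eps)$ then gives $\lambda_n(u(t))=\lambda_n(u_0)$ for all $n$ and all $t$.

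Finally, by Corollary~\ref{caracterisation spectre Lu0} together with \eqref{spectre Lu0 u0 L2}, the spectrum of the self--adjoint operator $L_{u(t)}$ (defined via the quadratic form $\mathcal Q_{u(t)}$ for the $L^2$ potential $u(t)$) is precisely the discrete set $\{\lambda_n(u(t))\,;\,n\in\Nzero\}$, and the previous step identifies this with $\{\lambda_n(u_0)\,;\,n\in\Nzero\}=\sigma(L_{u_0})$, which is independent of $t$. This proves the invariance of $\sigma(L_{u(t)})$ along the flow.

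I do not expect a serious obstacle; the only point requiring care is that Proposition~\ref{Lipschitz} is stated with continuity on compact subsets of $\Ltwo$, which is exactly why the \emph{strong} convergence $u^\eps(t)\to u(t)$ of Theorem~\ref{extension du flot a L2} — and not merely the weak convergence of Proposition~\ref{caracterisation u} — is what makes the argument go through. An equivalent route would be to apply Corollary~\ref{caracterisation spectre Lu0} separately to the two approximating families $(u_0^\eps)$ and $(u^\eps(t))$, but invoking Proposition~\ref{Lipschitz} is cleaner.
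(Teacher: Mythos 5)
Your proposal is correct and follows essentially the same route as the paper: apply the isospectral identity $\la_n(u_0^\eps)=\la_n(u^\eps(t))$ for the smooth approximants, then pass to the limit using the strong $L^2$ convergence of Theorem~\ref{extension du flot a L2} together with the Lipschitz continuity of the eigenvalues on compact subsets from Proposition~\ref{Lipschitz}. Your additional remarks (why strong rather than weak convergence is needed, and the identification of the full spectrum via Corollary~\ref{caracterisation spectre Lu0}) simply make explicit what the paper leaves implicit.
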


	\begin{proof}
		Let $(u_0^\eps)\subseteq H^2_+(\T)$ such that $ \|u_0^\eps-u_0\|_{L^2}\to0$ as $ \eps\to0\,.$
		Since, for all $n\in\Nzero\,,$
		$$
		\la_n(u_0^\eps)=\la_n(u^\eps(t))\,,\quad \qlq t\in \R\,,
		$$
		then by passing to the limit, we infer by Proposition~\ref{Lipschitz} and Theorem~\ref{extension du flot a L2} that the spectrum of $L_{u(t)}$ is conserved in time.
		
	\end{proof}

	\begin{corollary}\label{fn base + identite}
		Let $u_0\in \mathcal{B}_{L^2_+}(1)\,.$ There exists an orthonormal \underline{basis} $(f_n^t) $ of $L^2_+(\T)$ constituted from the eigenfunction of $L_{u(t)}$\,,
		such that for all $n\in \Nzero\,,$ 
		\be\label{coordonnees de u dans la base des vect.propre fn}
		\ps{u(t)}{\fnt}=\ps{u_0}{\fnzero}\eee^{-it \la_n^2(u_0)}\,, \quad\qlq t\in\R\,.
		\ee
	\end{corollary}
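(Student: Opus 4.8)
The identity \eqref{coordonnees de u dans la base des vect.propre fn} to be established is word for word \eqref{ps u,fn identite} of Lemma~\ref{ps u,fn}, so the only genuinely new content is to upgrade the orthonormal \emph{family} $(\fnt)$ produced in Proposition~\ref{fn[t]} to an orthonormal \emph{basis} made of eigenfunctions of $L_{u(t)}$. The structural input making this possible is Theorem~\ref{extension du flot a L2}: it gives at once $\|u(t)\|_{L^2}=\|u_0\|_{L^2}<1$, so that $u(t)\in\mathcal{B}_{L^2_+}(1)$ for every $t\in\R$, and the strong convergence $u^\eps(t)\to u(t)$ in $\Ltwo$. Consequently, via the quadratic--form definition \eqref{Lu, u L2}, $L_{u(t)}$ is a self--adjoint operator, bounded from below and with compact resolvent, so that $\Ltwo$ is the orthogonal Hilbert sum of the finite--dimensional eigenspaces $\ker(L_{u(t)}-\mu\,\Id)$, $\mu\in\sigma(L_{u(t)})$.

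The first step is to verify that each $\fnt$ is an eigenfunction of $L_{u(t)}$ for the eigenvalue $\la_n(u(t))=\la_n(u_0)$, arguing exactly as in the proof of Proposition~\ref{fn[0]}. From $L_{u^\eps(t)}\fnepst=\la_n(u_0^\eps)\fnepst$ (Remark~\ref{fneps eigenvectors of Lueps}) one has, for all $g\in H^{1/2}_+(\T)$,
\[\langle D^{1/2}\fnepst\mid D^{1/2}g\rangle-\ps{T_{\overline{u^\eps(t)}}\fnepst}{T_{\overline{u^\eps(t)}}g}=\la_n(u_0^\eps)\ps{\fnepst}{g}.\]
Along the subsequence of Proposition~\ref{fn[t]} we have $\fnepst\rightharpoonup\fnt$ in $H^{1/2}_+(\T)$ by the uniform bound \eqref{borne H12}; also $\la_n(u_0^\eps)\to\la_n(u_0)$ by Proposition~\ref{Lipschitz}, and, since $u^\eps(t)\to u(t)$ in $\Ltwo$, Lemma~\ref{inegality Pi(u bar h)} yields $T_{\overline{u^\eps(t)}}g\to T_{\overline{u(t)}}g$ strongly and $T_{\overline{u^\eps(t)}}\fnepst\rightharpoonup T_{\overline{u(t)}}\fnt$ weakly in $\Ltwo$. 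Passing to the limit gives $\mathcal{Q}_{u(t)}(\fnt,g)=\la_n(u(t))\ps{\fnt}{g}$ for all $g\in H^{1/2}_+(\T)$, that is $\fnt\in\mrm{Dom}(L_{u(t)})$ and $L_{u(t)}\fnt=\la_n(u(t))\fnt$, with $\la_n(u(t))=\la_n(u_0)$ by Corollary~\ref{ln[0]=ln[t]}.

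It remains to prove that the orthonormal family $(\fnt)_{n\geq0}$ of eigenfunctions of $L_{u(t)}$ is complete; this is the delicate point, since a priori the $\fnt$ are only subsequential limits and might miss part of $\Ltwo$. By Corollary~\ref{ln[0]=ln[t]} and the max--min principle, $(\la_n(u(t)))_{n\geq0}=(\la_n(u_0))_{n\geq0}$ enumerates $\sigma(L_{u(t)})$ counted with multiplicity. Fixing a distinct eigenvalue $\mu$, the index set $\{n\geq0:\la_n(u(t))=\mu\}$ then has cardinality $\dim\ker(L_{u(t)}-\mu\,\Id)$, so $\{\fnt:\la_n(u(t))=\mu\}$ is an orthonormal subset of that eigenspace of the right cardinality, hence an orthonormal basis of it. Taking the union over $\mu\in\sigma(L_{u(t)})$ and invoking the Hilbert--sum decomposition above, $(\fnt)_{n\geq0}$ is an orthonormal basis of $\Ltwo$; combined with Lemma~\ref{ps u,fn}, this proves the corollary. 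The main obstacle is precisely this completeness: what closes the argument is that $L^2$--mass conservation confines $u(t)$ to the unit ball, where $L_{u(t)}$ still has purely discrete spectrum, while isospectrality locks the eigenvalues carried by the $\fnt$ onto the entire spectrum of $L_{u(t)}$ with exact multiplicities, leaving no room for a missing eigenfunction.
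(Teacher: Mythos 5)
Your proof is correct and follows essentially the same route as the paper: pass to the limit in the eigenvalue relation (in its quadratic--form formulation) to show each $\fnt$ is an eigenfunction of $L_{u(t)}$ with eigenvalue $\la_n(u(t))=\la_n(u_0)$, then use Corollaries~\ref{caracterisation spectre Lu0} and~\ref{ln[0]=ln[t]} to conclude that the family exhausts the eigenfunctions and is therefore an orthonormal basis, finishing with Lemma~\ref{ps u,fn}. Your explicit multiplicity count via the max--min principle within each eigenspace merely spells out the completeness step that the paper leaves implicit.
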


	\begin{proof}
		Taking into account Lemma~\ref{ps u,fn}\,, we only need to prove that the orthonormal family $(\fnt) $ found in Proposition~\ref{fn[t]} as
		\be\label{cv fn[eps]}
		\fnepst\rightharpoonup \fnt \text{ in }H^{\frac{1}{2}}_+(\T)\,,
		\qquad\text{ and } \qquad
		\fnepst\to \fnt \text{ in }L^2_+(\T)\,,
		\ee
		is actually an orthonormal basis of $L^2_+(\T)\,.$
		On the one hand, using \eqref{cv fn[eps]}  and since $u^\eps(t)\to u(t)$ in $\Ltwo\,,$
		one can directly prove that $L_{u^\eps(t)}\fnepst\rightharpoonup L_{u(t)}\fnt$ in $\Ltwo\,$.
		On the other hand, using Proposition~\ref{Lipschitz}\,, we infer that taking $\eps\to 0$ in
		$$
		\ps{L_{u^\eps(t)}\fnepst}{g}=\la_n(u^\eps(t))\ps{\fnepst}{g}\,,\qquad \qlq g \in H^{\frac12}_+(\T)\,,
		$$
		leads to
		$$
		\ps{L_{u(t)}\,\fnt}{g}=\la_n(u(t))\ps{\fnt}{g}\,,\qquad \qlq g \in H^{\frac12}_+(\T)\,.
		$$
		As a result, the $(\fnt)$ describes all the eigenfunctions of the self--adjoint operator $L_{u(t)}$\,, thanks to Corollary~\ref{ln[0]=ln[t]} and Corollary~\ref{caracterisation spectre Lu0}\,. Hence, they form an orthonormal basis of $L^2_+(\T)\,.$
	\end{proof}
	\begin{Rq}
		The nice evolution in \eqref{coordonnees de u dans la base des vect.propre fn} of such coordinates  suggests that the so--called ``Birkhoff coordinates'' of \eqref{CS+} are the $(\ps{u(t)}{\fnt})$\,. 
    To be sure, we need to construct a one-by-one Birkhoff map $u\longleftrightarrow (\ps{u(t)}{\fnt})\,,$ similar to the remarkable achievement for the Benjamin--Ono equation in \cite{GK21}. 
    This construction can unlock several significant outcomes regarding the equation's dynamics.  In particular, the global well--posedness of the focusing equation when $\|u_0\|_{L^2}\geq 1\,.$
	\end{Rq}

	\vskip1cm
	\section{Proof of Corollary~\ref{GWP k geq 0} and Theorem~\ref{Compact} }
	\label{Proof of th 3 and 4}
	\vskip0.2cm
	
	To summarize, we have proved the global well--posedness of \eqref{CS+}--equation in $H^s_+(\T)$\,, $s>\frac32$\,, and for $s=0$ which correspond to $H^0_+(\T)\equiv L^2_+(\T)$\,. The following corollary aims to prove the global well--posedness for $0<s\leq \frac32\,.$
	\begin{figurehere}
		\begin{center}
			\begin{tikzpicture}
				\draw[->] (-0,0)--(13.25,0);
				\draw[red] (-0,0) node{\bf{I}} node[above=2.5]{$L^2_+$};
				\draw[red] (-0,-0.1) node[below] {Theorem~\ref{extension du flot a L2}};
				
				\draw[caputmortuum] (3.5,-0.1) node[below] {Corollary~\ref{GWP k geq 0}};
				\draw[caputmortuum,-, line width=4] (-0,0)--(6,0);
				\draw[caputmortuum] (6,0) node{{\bf I}};
				
				\draw[britishracinggreen] (6,0) node[above=2]{$H^\frac32_+$};
				\draw[britishracinggreen] (8.5,-0.1) node[below] {Theorem~\ref{GWP k geq 4}};
				\draw[britishracinggreen,->, line width=4,](6,-0)--(13.25,-0);
			\end{tikzpicture}
		\end{center}
	\end{figurehere}
	
	\vskip0.2cm
	\begin{corollary*}\ref{GWP k geq 0}\textbf{.}\label{Corollaire 1.3 section 4}
		For all $0\leq s\leq\frac32\,,$
		the Calogero--Sutherland DNLS focusing equation \eqref{CS+} is globally well--posed in $H^s_+(\T)\cap \mathcal{B}_{L^2_+}(1)$\,.   Moreover,  the following a--priori bound holds, 
		\[
		\sup_{t\in\R}\|u(t)\|_{H^s}\;\leq C\;,
		\]
		where $C=C(\|u_0\|_{H^s})>0$ is a positive constant.
	\end{corollary*}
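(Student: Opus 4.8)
The plan is to reduce to Theorem~\ref{GWP k geq 4} by the same approximation scheme used to prove Theorem~\ref{extension du flot a L2}, the new ingredient being that the conservation laws of Lemma~\ref{loi de conservation} control the $H^s$--norm through the norm equivalence of Proposition~\ref{controle loi de conservation}. Fix $0<s\leq\frac32$ and $u_0\in H^s_+(\T)\cap\B{1}$; since $\B{1}$ is open, pick $(\uzeroeps)\subseteq H^2_+(\T)\cap\B{1}$ with $\|\uzeroeps-u_0\|_{H^s}\to0$. By Theorem~\ref{GWP k geq 4} with $r=2$ the solutions $u^\eps(t):=\mathcal{S}^+(t)\uzeroeps$ are globally defined in $H^2_+(\T)$, and by Theorem~\ref{extension du flot a L2} one has $u^\eps\to u$ in $\mathcal{C}([-T,T];L^2_+(\T))$ for every $T>0$, where $u$ is the limiting $L^2_+$--solution, with $\|u(t)\|_{L^2}=\|u_0\|_{L^2}<1$; in particular $u(t)\in\B{1}$ for all $t\in\R$.

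The first step is the uniform bound $\sup_{\eps}\sup_{t\in\R}\|u^\eps(t)\|_{H^s}\leq C(\|u_0\|_{H^s})$. Since $\uzeroeps\in\B{1}\cap H^2_+(\T)$, Proposition~\ref{controle loi de conservation} with $r=2$ gives, for all $t$, the equivalence $\frac1C\|f\|_{H^s}\leq\|(L_{u^\eps(t)}+\la)^sf\|_{L^2}\leq C\|f\|_{H^s}$ (admissible because $s\leq\frac52$), while Lemma~\ref{loi de conservation} gives that $\mathcal{H}_{2s}$ is conserved (since $2s\leq3<4=2r$); hence
\[
\|u^\eps(t)\|_{H^s}\;\leq\;C\,\|(L_{u^\eps(t)}+\la)^su^\eps(t)\|_{L^2}\;=\;C\,\|(L_{\uzeroeps}+\la)^s\uzeroeps\|_{L^2}\;\leq\;C^2\,\|\uzeroeps\|_{H^s}\,.
\]
The key point is that, by the inductive structure of the proof of Proposition~\ref{controle loi de conservation} (Steps~1, 3 and 5 there, together with the uniform-in-time control of the lower-order norms coming from the conservation laws at the previous level), the constant at a level $s\leq\frac32$ depends only on $\|\uzeroeps\|_{H^\sigma}$ for some half-integer $\sigma<s$; these are bounded uniformly in $\eps$, so $C$ may be taken of the form $C(\|u_0\|_{H^s})$. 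Combining this uniform bound with $u^\eps(t)\to u(t)$ in $L^2_+(\T)$ yields $u^\eps(t)\rightharpoonup u(t)$ weakly in $H^s_+(\T)$, whence $u(t)\in H^s_+(\T)$ and, by weak lower semicontinuity, $\sup_{t\in\R}\|u(t)\|_{H^s}\leq C(\|u_0\|_{H^s})$ --- the announced a priori estimate.

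It remains to promote $u^\eps\to u$ to $\mathcal{C}([-T,T];H^s_+(\T))$, which is global well--posedness in the sense of Remark~\ref{Rq GWP}. For any $s'<s$, interpolating $\|u^\eps(t)-u(t)\|_{H^{s'}}\leq\|u^\eps(t)-u(t)\|_{L^2}^{1-s'/s}\|u^\eps(t)-u(t)\|_{H^s}^{s'/s}$ against the uniform $H^s$--bound already gives convergence in $\mathcal{C}([-T,T];H^{s'}_+(\T))$, hence $u\in\mathcal{C}(\R;H^{s'}_+(\T))$ and $u^\eps(t)\rightharpoonup u(t)$ in $H^s_+(\T)$. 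To reach $s'=s$ I would, following the strategy of Section~\ref{section: Extension du flot}, pass to the limit in the conserved quantity $\mathcal{H}_{2s}$ itself: writing $\|v\|_{H^s}^2=\mathcal{H}_{2s}(v)+R_s(v)$ on bounded subsets of $H^s_+(\T)$, where $R_s$ couples $v$ only through products and factors of order strictly less than $s$ and is therefore continuous for the weak topology of $H^s_+(\T)$ (via the compact embeddings $H^s(\T)\hookrightarrow\hookrightarrow L^p(\T)$), and using the conservation $\mathcal{H}_{2s}(u^\eps(t))=\mathcal{H}_{2s}(\uzeroeps)$ together with $\uzeroeps\to u_0$ in $H^s$ and the continuity of $\mathcal{H}_{2s}$, one obtains $\|u^\eps(t)\|_{H^s}^2\to\mathcal{H}_{2s}(u_0)+R_s(u(t))$. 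On the other hand, Corollary~\ref{fn base + identite} and the spectral invariance of Corollary~\ref{ln[0]=ln[t]} give, by expansion of $u(t)$ in the eigenbasis $(\fnt)$ of $L_{u(t)}$,
\[
\mathcal{H}_{2s}(u(t))=\sum_{n\geq0}(\la_n(u_0)+\la)^{2s}\va{\ps{u(t)}{\fnt}}^2=\sum_{n\geq0}(\la_n(u_0)+\la)^{2s}\va{\ps{u_0}{\fnzero}}^2=\mathcal{H}_{2s}(u_0)\,,
\]
so $\mathcal{H}_{2s}(u_0)+R_s(u(t))=\mathcal{H}_{2s}(u(t))+R_s(u(t))=\|u(t)\|_{H^s}^2$. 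Hence $\|u^\eps(t)\|_{H^s}\to\|u(t)\|_{H^s}$, and the Radon--Riesz property of $H^s$ upgrades the weak convergence to strong convergence, uniform on $[-T,T]$ by the uniform bounds; so $u\in\mathcal{C}(\R;H^s_+(\T))$, and uniqueness of the extension is inherited from Theorem~\ref{extension du flot a L2}.

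The main obstacle is twofold. First, one must check that the constant in Proposition~\ref{controle loi de conservation} genuinely depends only on a Sobolev norm strictly weaker than $H^s$ for $0<s\leq\frac32$, so that the a priori bound survives the approximation (and, incidentally, that the norm equivalence remains valid for the limit potential $u(t)\in H^s_+(\T)\cap\B{1}$, for which the original hypothesis $r>\frac32$ is no longer available). Second, converting the weak $H^s$--convergence into strong convergence cannot be read off from the Fourier coefficients and forces one to go through the conserved quantity $\mathcal{H}_{2s}$ and the spectral description of $u(t)$ of Corollary~\ref{fn base + identite} --- the delicate point being the weak-continuity of the remainder $R_s$ at the low regularities $s\leq\frac12$, which relies on the compact Sobolev embeddings and the $L^p$--bounds established in Proposition~\ref{controle loi de conservation}.
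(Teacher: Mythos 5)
Your scheme for the a priori bound coincides with the paper's: approximate $u_0$ in $H^s$ by $(\uzeroeps)\subseteq H^2_+(\T)$, use Theorem~\ref{GWP k geq 4}, Proposition~\ref{controle loi de conservation} and the conservation of $\mathcal H_{2s}$ (Lemma~\ref{loi de conservation}) to get $\sup_t\|u^\eps(t)\|_{H^s}\lesssim\|\uzeroeps\|_{H^s}$ with a constant depending only on lower-order norms, then pass to the weak limit. (A harmless slip: with $r=2$ the admissible range in Proposition~\ref{controle loi de conservation} is $s\leq\frac{\lfloor 4\rfloor+1}{2}=2$, not $\frac52$; this does not affect you since $s\leq\frac32$.) The divergence, and the genuine gap, is in the upgrade from weak to strong $H^s$ convergence. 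You base it on a decomposition $\|v\|_{H^s}^2=\mathcal H_{2s}(v)+R_s(v)$ with $R_s$ weakly continuous on bounded sets of $H^s_+(\T)$, justified by saying that $R_s$ ``couples $v$ only through products and factors of order strictly less than $s$.'' When $2s\notin\N$ --- which is the case for most $s\in[0,\frac32]$ --- the quantity $\mathcal H_{2s}(v)=\ps{(L_v+\la)^{2s}v}{v}$ involves a genuine fractional power of the $v$--dependent operator $L_v$, and no explicit expansion into $\|v\|_{\dot H^s}^2$ plus lower-order multilinear terms is available; your heuristic only makes sense when $2s$ is an integer (where, with the compact Sobolev embeddings, the claim can indeed be checked by hand). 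You yourself flag this as ``the delicate point'' but give no argument, and likewise the continuity $\mathcal H_{2s}(\uzeroeps)\to\mathcal H_{2s}(u_0)$ you invoke raises the same issue of fractional powers of varying operators. As written, your proof only closes for $s\in\{\tfrac12,1,\tfrac32\}$.

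This missing step is exactly what the paper's argument is organized around, and it avoids any expansion of $\mathcal H_{2s}$. The paper estimates $\|u^\eps(t^\eps)-u(t)\|_{H^s}^2\lesssim\|(L_{u^\eps(t^\eps)}+\la)^s(u^\eps(t^\eps)-u(t))\|_{L^2}^2$ using the norm equivalence of Proposition~\ref{controle loi de conservation} applied to the smooth approximating solutions (so the hypothesis $r>\frac32$ is available there, answering the concern you raise about the limit potential), expands the square, uses the conservation law $\|(L_{u^\eps(t^\eps)}+\la)^su^\eps(t^\eps)\|_{L^2}=\|(L_{u_0^\eps}+\la)^su_0^\eps\|_{L^2}$, and passes to the limit in the mixed and fixed terms via strong resolvent convergence (Proposition~\ref{resolvent cv}) combined with the functional-calculus Lemma~\ref{troncature}; the latter is precisely the tool that handles $(L_{u^\eps}+\la)^s$ for fractional $s$ and varying potentials, and your proposal contains nothing playing its role. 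The final cancellation $\|(L_{u_0}+\la)^su_0\|_{L^2}=\|(L_{u(t)}+\la)^su(t)\|_{L^2}$ is then obtained, just as in your last display, from the eigenbasis of Corollary~\ref{fn base + identite} and the spectral invariance of Corollary~\ref{ln[0]=ln[t]}. To repair your route you would either have to prove the weak continuity of $R_s$ for fractional $s$ (essentially redoing the functional-calculus work of Lemma~\ref{troncature}) or simply adopt the paper's argument for this step.
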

	
	\begin{proof}
		For $s=0\,,$ we infer by Theorem~\ref{extension du flot a L2} the global well--posedness of the problem in $L^2_+(\T)$ in the sense of continuous extension of the flow from $H^2_+(\T)$ to $\Ltwo$\,.
		For $0<s\leq \frac32\,,$ let $u_0\in H^s_+(\T)\cap \mathcal{B}_{L^2_+}(1)$ and consider $(u^\eps_0)\subseteq H^2_+(\T)$ such that $u_0^\eps\to u_0$ in $H^s_+(\T)\,$. Then applying  Proposition~\ref{controle loi de conservation}\,, and since $u_0^\eps\to u_0$ in $H^s_+(\T)\,,$ there exists $C>0$ uniform with respect to all  small $\eps\,,$  such that 
		\[
		\frac{1}{C}\,\|u^\eps(t^\eps)\|_{H^s}
		\leq 
		\|(L_{u^\eps(t^\eps)}+\la)^s u^\eps(t^\eps)\|_{L^2}
		=\|(L_{u_0^\eps}+\la)^s u^\eps_0\|_{L^2}
		\leq C\|u^\eps_0\|_{H^s}\,,
		\]
		thanks to Lemma~\ref{loi de conservation}\,. Note that $\la$ is also uniform with respect to all $\eps$ small for the same reasons presented in the proof of Proposition~\ref{resolvent cv}\,. 
  Therefore, for all $\eps>0$ small,
		\be\label{u[eps] Hs}
		\|u^\eps(t^\eps)\|_{H^s}\leq C\|u_0\|_{H^s}\,.
		\ee
		Hence, as $t^\eps\to t$\,, we have $u^\eps(t^\eps)\rightharpoonup u(t)$ in $H^s_+(\T)$\,, where $u$ is a characterized function for all $t$ obtained as in Proposition~\ref{caracterisation u}\,. In particular, we infer $u^\eps(t^\eps)\to u(t)$ in $L^2_+(\T)$ with 
		\be\label{u Hs}
		\|u(t)\|_{H^s}\lesssim\|u_0\|_{H^s}\,,
		\ee
		by \eqref{u[eps] Hs}\,. As of now, to deduce the strong convergence in $H^s_+(\T)$ we use Proposition~\ref{controle loi de conservation}\,. 
		Thus, for all $\eps>0\,,$
		\begin{align}\label{u eps - u Hs}
			\|u^\eps(t^\eps)-u(t)\|_{H^s}^2
			\lesssim&\,
			\|(L_{u^\eps(t^\eps)}+\la)^s (u^\eps(t^\eps)-u(t))\|_{L^2}^2
			\\
			=&\,\|(L_{u^\eps(t^\eps)}+\la)^s u^\eps(t^\eps)\|_{L^2}^2
			+\|(L_{u^\eps(t^\eps)}+\la)^s u(t)\|_{L^2}^2\notag
			\\
			&\,-2\mrm{Re}\ps{(L_{u^\eps(t^\eps)}+\la)^s u^\eps(t^\eps)}{(L_{u^\eps(t^\eps)}+\la)^s u(t)}\notag
		\end{align}
		\vskip0.2cm
		\noindent
		Recall that $u^\eps(t^\eps)\to u(t)$ in $L^2_+(\T)$ which leads by Proposition~\ref{resolvent cv} to $L_{u^\eps(t^\eps)}\to L_{u(t)}$ in the strong resolvent sense. Thus, by functional calculus, (see the following lemma-- Lemma~\ref{troncature}) we infer 
		$$
		\begin{cases}
			(L_{u^\eps(t^\eps)}+\la)^s\, u(t) \to
			(L_{u(t)}+\la)^s \,u(t) \text{ in } \Ltwo\,,
			\\
			(L_{u^\eps(t^\eps)}+\la)^s\, \ueps(t^\eps) \rightharpoonup
			(L_{u(t)}+\la)^s\, u(t) \text{ in } \Ltwo\,,
		\end{cases}
		$$
		as $\eps\to0\,.$  In addition, for $\eps>0\,,$ recall by Lemma~\ref{loi de conservation}\,,
		\be\label{conservation premier terme}
		\|(L_{u^\eps(t^\eps)}+\la)^s\, u^\eps(t^\eps)\|_{L^2}^2
		=
		\|(L_{u^\eps_0}+\la)^s\, u^\eps_0\|_{L^2}^2\,.
		\ee
		Therefore, passing to the limit in \eqref{u eps - u Hs}\,, and since $u_0^\eps\to u_0$ in $H^s_+(\T)$ combined with Lemma~\ref{troncature} and Proposition~\ref{controle loi de conservation}\,, we deduce for all $\eps$ small,
		$$
		\|u^\eps(t^\eps)-u(t)\|_{H^s}^2
		\lesssim
		\|(L_{u_0}+\la)^s u_0\|_{L^2}^2
		-\|(L_{u(t)}+\la)^s u(t)\|_{L^2}^2\,.
		$$
		At this stage, it remains to show that the right--hand side of the previous inequality is vanishing. Indeed, by Corollary~\ref{ln[0]=ln[t]}\,,
		\begin{align*}
			\ps{(L_{u(t)}+\la)^{2s}u(t)}{u(t)}
			=&\,\sum_{n\geq 0}(\la_n(u(t))+\la)^{2s}\va{\ps{u(t)}{\fnt}}^2
			\\
			=&\,\sum_{n\geq 0}(\la_n(u_0)+\la)^{2s}\va{\ps{u_0}{\fnzero}}^2
			=\,\ps{(L_{u_0}+\la)^{2s}u_0}{u_0}\,,
		\end{align*}
		where $(\fnt)$ are the orthonormal basis obtained in Corollary~\ref{fn base + identite} since $u^\eps(t^\eps)\to u(t)$ in $L^2_+(\T)\,$.
		As a result, $\|(L_{u_0}+\la)^s u_0\|_{L^2}^2
		=\|(L_{u(t)}+\la)^s u(t)\|_{L^2}^2$\,, and
		as $\eps\to 0$\,, 
		\[
		\|u^\eps(t^\eps)-u(t)\|_{H^s}^2
		\longrightarrow0\,.
		\]
		Hence, $u\in\mathcal{C}(\R,H^s_+(\T))$ such that \eqref{u Hs} is satisfied, and for all $T>0\,,$
		\[
		\sup_{t\in[-T,T]}\|u^\eps(t)-u(t)\|_{H^s}\to 0\,.
		\]
	\end{proof}

		
	To conclude the proof of Corollary~\ref{GWP k geq 0}, we need to prove the following functional analysis result.
	\begin{lemma}\label{troncature}
		Let $(A_\eps)$ be a sequence of  positive self--adjoint operators in $L^2$\,. Suppose that  $A_\eps\to A$ in the strong resolvent sense as $\eps\to 0\,,$ and for all $s\geq 0\,,$
		$$
		\mrm{Dom}(A_\eps^s)=\mrm{Dom}(A^s)=H^s\,, \qquad \eps>0\,.
		$$ 
		Moreover, assume that for all $u\in H^s\,,$  the $(A_\eps^su)$ are  uniformly bounded with respect to $\eps>0$ in the following sense $\|A_\eps^su\|\leq C\|u\|_{H^s}\,.$
		Then, for all $s\geq0\,,$
		\be\label{Aeps[s] u to A[s]u}
		A_\eps^su\longrightarrow A^su \text{ in } L^2\,, \qquad \eps\to 0\,.
		\ee
		
	\end{lemma}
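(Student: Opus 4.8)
The plan is to run a truncated functional‑calculus argument; the only genuinely delicate point is that $x\mapsto x^s$ is unbounded, so strong resolvent convergence $A_\eps\to A$ does not give $A_\eps^s\to A^s$ directly, and one must exploit the \emph{uniform} boundedness hypothesis at a higher exponent to control the high‑frequency tail uniformly in $\eps$.

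First I would reduce to a dense subspace. Since $\mathrm{Dom}(A^s)=H^s$ and $A^s$ is closed (being a function of a self‑adjoint operator), $A^s\colon H^s\to L^2$ is bounded; together with the hypothesis $\|A_\eps^s u\|\le C\|u\|_{H^s}$ this means the family $\{A_\eps^s\}\cup\{A^s\}$, regarded as operators $H^s\to L^2$, is uniformly bounded. Hence, by a standard $3\varepsilon$‑argument, it suffices to prove \eqref{Aeps[s] u to A[s]u} for $u$ ranging over $H^{2s}$, which is dense in $H^s$ (in the situation of the paper, where $H^s=H^s_+(\T)$, this is immediate from trigonometric polynomials; in general the spectral truncations $\mathds{1}_{[0,R]}(A)u$ converge to $u$ in the graph norm of $A^s$, which is equivalent to the $H^s$‑norm by the bounded inverse theorem).

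Next I would introduce the bounded regularisation $\phi_R(x):=\bigl(\tfrac{Rx}{R+x}\bigr)^s$ on $[0,\infty)$: each $\phi_R$ is bounded and continuous, $0\le\phi_R(x)\le x^s$, and $\phi_R(x)\uparrow x^s$ as $R\to\infty$. For fixed $R$, strong resolvent convergence gives $\phi_R(A_\eps)u\to\phi_R(A)u$ in $L^2$ as $\eps\to0$ \cite[Proposition~10.1.9]{deO09}, and $\phi_R(A)u\to A^su$ as $R\to\infty$ by dominated convergence in the spectral measure of $A$ at $u$ (with dominating function $x^{2s}$, integrable since $u\in\mathrm{Dom}(A^s)$). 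The point is to bound $\|A_\eps^s u-\phi_R(A_\eps)u\|$ uniformly in $\eps$: writing $\mu_\eps$ for the spectral measure of $A_\eps$ at $u$ and splitting $\int_0^\infty(x^s-\phi_R(x))^2\,d\mu_\eps$ at $x=\sqrt R$ — using $x^s-\phi_R(x)\le\delta_R\,x^s$ with $\delta_R:=1-(1+R^{-1/2})^{-s}\to0$ on $\{x\le\sqrt R\}$, and $(x^s-\phi_R(x))^2\le x^{2s}\le R^{-s}x^{4s}$ on $\{x>\sqrt R\}$ — one gets
\[
\|A_\eps^s u-\phi_R(A_\eps)u\|^2\;\le\;\delta_R^2\,\|A_\eps^s u\|^2+R^{-s}\,\|A_\eps^{2s}u\|^2\;\le\;C^2\bigl(\delta_R^2\,\|u\|_{H^s}^2+R^{-s}\,\|u\|_{H^{2s}}^2\bigr)\;=:\;\eta_R ,
\]
where $C$ is allowed to depend on the exponent, $u\in H^{2s}$ is used in the last step, and $\eta_R\to0$ as $R\to\infty$ \emph{uniformly in $\eps$}.

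Putting these together, for $u\in H^{2s}$ and any $R>0$,
\[
\|A_\eps^s u-A^s u\|\;\le\;\|A_\eps^s u-\phi_R(A_\eps)u\|+\|\phi_R(A_\eps)u-\phi_R(A)u\|+\|\phi_R(A)u-A^s u\|\;\le\;\sqrt{\eta_R}+\|\phi_R(A_\eps)u-\phi_R(A)u\|+\|\phi_R(A)u-A^s u\| ,
\]
so letting $\eps\to0$ (fixed $R$) and then $R\to\infty$ yields $A_\eps^s u\to A^s u$ in $L^2$ for $u\in H^{2s}$, and the reduction of the second paragraph promotes this to all $u\in H^s$. The main obstacle is exactly the uniform tail estimate $\eta_R\to0$ uniformly in $\eps$: without the hypothesis that the $A_\eps^{2s}$ are uniformly bounded one cannot make $\|A_\eps^s u-\phi_R(A_\eps)u\|$ small uniformly in $\eps$, and the whole scheme collapses; everything else is routine spectral theory.
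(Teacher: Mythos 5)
Your proof is correct and follows essentially the same strategy as the paper's: reduce by density and the uniform bounds to sufficiently regular vectors (you use $u\in H^{2s}$, the paper uses spectral truncations $\chi_R(A)u$), replace $x^s$ by a bounded continuous truncation to which strong resolvent convergence applies, and control the high--spectrum remainder uniformly in $\eps$ through the hypothesis at exponent $2s$. The only differences are cosmetic: you regularize with $\phi_R(x)=(Rx/(R+x))^s$ and split the spectral integral at $\sqrt{R}$, whereas the paper inserts a smooth cutoff $\chi_{\tilde R}(A_\eps)$ and bounds the tail by $\tilde{R}^{-s}\,\lVert A_\eps^{2s}\chi_R(A)u\rVert$.
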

	
	\begin{proof}
		For all $R>0$, let $\chi_R\in \mathcal{C}^\infty(\R_+)$ such that $\chi_R\equiv 1$ on $[0,R]$ and $\mrm{supp}(\chi_R)\subseteq[0,2R]$\,. Note that, for all $s\geq 0\,,$ the subset $\lracc{\chi_R(A)\,u\;;\; u\in H^s, \; R>0}$ is dense in $H^s\,.$ 
		Then, 
		since the $(A_\eps^su)$ are uniformly bounded with respect to $\eps$\,,
		it is sufficient to prove for all $R>0\,,$
		$$
		A_\eps^s\, \chi_R(A)u\longrightarrow A^s \chi_R(A)u \,,\qquad\eps\to0\,, 
		$$
		to obtain \eqref{Aeps[s] u to A[s]u}\,.
		Toward this end, let $R>0$, and write for any $s\geq 0\,,$ for all $\eps>0\,,$
		\be\label{decomposition suivant chi[tilde r]}
		A_\eps^s\, \chi_R(A)u = A_\eps^s\,\chi_{\tilde{R}}(A_\eps) \chi_R(A)u 
		+ A_\eps^s\,(1-\chi_{\tilde{R}}(A_\eps)) \chi_R(A)u\,,
		\ee
		where  $\chi_{\tilde{R}}\in \mathcal{C}^\infty(\R_+)$ such that $\chi_{\tilde{R}}\equiv 1$ on $[0,\tilde{R}]$ and $\mrm{supp}(\chi_{\tilde{R}})\subseteq[0,2\tilde{R}]$\,, $\tilde{R}\geq 2R\,$. Notice that, 
		\begin{align*}
			\| A_\eps^s\,(1-\chi_{\tilde{R}}(A_\eps)) \chi_R(A)u\|
			=&\, \| A_\eps^{-s}\,(1-\chi_{\tilde{R}}(A_\eps)) A_\eps^{2s}\chi_R(A)u\|
			\\
			\leq &\,\frac{C^2}{(2\tilde{R})^{s}}\,\|u\|_{H^s}\,.
		\end{align*}
		Therefore, for all $\eta>0\,,$ there exists $\tilde{R}>\!>0\,,$ such that  for all $\eps>0\,,$
		$$
		\| A_\eps^s\,(1-\chi_{\tilde{R}}(A_\eps)) \chi_R(A)u\|< \eta\,,
		$$
		and so, by \eqref{decomposition suivant chi[tilde r]}\,,
		\be\label{R tilde grand}
		\| A_\eps^s\, \chi_R(A)u-A_\eps^s\,\chi_{\tilde{R}}(A_\eps) \chi_R(A)u \|<\eta\,, \qquad \qlq\eps>0\,.
		\ee
		Besides, recall that $A_\eps\to A$ in the strong resolvent sense as $\eps\to 0\,.$ Hence, by \cite[Proposition 10.1.9]{deO09}\,, $f(A_\eps)\to f(A)$ as $\eps\to0$\,, in the  operator norm for all continuous bounded $f$\,. In particular, for $f(x)=x^s\, \chi_{\tilde{R}}(x)\,,$ we have
		\be \label{chi R[tilde] A[eps] cv}
		A_\eps^s\:\chi_{\tilde{R}}(A_\eps)\,\chi_R(A)u\longrightarrow A^s\,\chi_R(A)u\,, \qquad \eps \to 0\,.
		\ee
		Thus, combining \eqref{R tilde grand} and \eqref{chi R[tilde] A[eps] cv}\,, we infer for all $\tilde{\eta}>0\,,$ there exists $\eps_0>0$ such that $\qlq \eps>\eps_0\,,$
		$$
		\| A_\eps^s\, \chi_R(A)u- A^s\, \chi_R(A)u\|<\tilde{\eta}\,.
		$$
	\end{proof}

	\vskip0.2cm
	Beyond the global well--posedness results of the \eqref{CS+} Cauchy's Problem, we are interested in some qualitative properties about the flow $\mathcal S^+(t)$ of this equation. Therefore, we prove that all weak limit points of the orbit are actually strong limit points. 
	
	\begin{theorem*}\ref{Compact} \textbf{.}
		Given an initial data $u_0\in \B{1}\cap H^s_+(\T)$\,, $s\geq 0\,,$ the orbit of the solution $\lracc{\mathcal{S}^+(t)u_0\,;\,t\in\R}$ is relatively compact in $H^s_+(\T)\,.$
	\end{theorem*}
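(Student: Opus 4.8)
The statement to prove is that, for $u_0\in\B{1}\cap H^s_+(\T)$ with $s\geq0$, the closure of the orbit $\mathcal{O}:=\lracc{\mathcal S^+(t)u_0\,;\,t\in\R}$ is compact in $H^s_+(\T)$, i.e.\ every sequence $(t_k)\subseteq\R$ has a subsequence along which $u(t_k):=\mathcal S^+(t_k)u_0$ converges in $H^s_+(\T)$. First I would record the uniform bound coming from Lemma~\ref{loi de conservation} and Proposition~\ref{controle loi de conservation} (valid for all $s\geq 0$ by Remark~\ref{r>3/2}): since $\la_n(u(t))=\la_n(u_0)$ are conserved,
\[
\|u(t)\|_{H^s}\leq C\,\|(L_{u(t)}+\la)^s u(t)\|_{L^2}=C\,\|(L_{u_0}+\la)^s u_0\|_{L^2}\leq C^2\|u_0\|_{H^s}\,,\qquad \qlq t\in\R\,,
\]
so $\mathcal{O}$ is bounded in $H^s_+(\T)$. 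Fixing $(t_k)$ and extracting, $u(t_k)\rightharpoonup v$ weakly in $H^s_+(\T)$, hence weakly in $\Ltwo$, and $\|v\|_{L^2}\leq\liminf_k\|u(t_k)\|_{L^2}=\|u_0\|_{L^2}<1$, so $v\in\B{1}\cap H^s_+(\T)$.

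The core of the proof is to identify the spectral data of $v$ with those of $u_0$. Let $(\fnt)$ be the orthonormal basis of $\Ltwo$ made of eigenfunctions of $L_{u(t)}$ provided by Corollary~\ref{fn base + identite}, so that $L_{u(t)}\fnt=\la_n(u_0)\fnt$ and $\ps{u(t)}{\fnt}=\ps{u_0}{\fnzero}\eee^{-it\la_n^2(u_0)}$. Since $\|u(t)\|_{L^2}=\|u_0\|_{L^2}<1$ for all $t$ (Theorem~\ref{extension du flot a L2}), the computation of Proposition~\ref{fn[t]}, based on $L_{u(t)}=D-T_{u(t)}T_{\overline{u(t)}}$ and the sharp inequality of Lemma~\ref{inegality Pi(u bar h)}, yields the $t$-independent estimate $\|\fnt\|_{\dot{H}^{\frac12}}^2\leq(\la_n(u_0)+\|u_0\|_{L^2}^2)/(1-\|u_0\|_{L^2}^2)$. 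Hence, for each $n$, $\lracc{\fnt\,;\,t\in\R}$ is bounded in $H^{\frac12}_+(\T)$; by a diagonal extraction on the subsequence $(t_k)$ we may assume $f_n^{t_k}\rightharpoonup g_n$ in $H^{\frac12}_+(\T)$ and $f_n^{t_k}\to g_n$ in $\Ltwo$, where $(g_n)$ is orthonormal and, passing to the limit in the eigenvalue equation exactly as in Proposition~\ref{fn[0]}, $L_v g_n=\la_n(u_0)g_n$. By Proposition~\ref{Lipschitz} and Corollary~\ref{ln[0]=ln[t]}, $\sigma(L_v)=\sigma(L_{u_0})$ with the same multiplicities, so since $L_v$ has compact resolvent $(g_n)$ is in fact an orthonormal \emph{basis} of $\Ltwo$, just as in Corollary~\ref{fn base + identite}. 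Finally, from $u(t_k)\rightharpoonup v$ in $\Ltwo$ and $f_n^{t_k}\to g_n$ in $\Ltwo$ one gets $\va{\ps{v}{g_n}}=\lim_k\va{\ps{u(t_k)}{f_n^{t_k}}}=\va{\ps{u_0}{\fnzero}}$ for every $n$.

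Granting this, the case $s=0$ is immediate: by Bessel's inequality and the fact that $(\fnzero)$ is an orthonormal basis of $\Ltwo$,
\[
\|v\|_{L^2}^2\geq\sum_{n\geq0}\va{\ps{v}{g_n}}^2=\sum_{n\geq0}\va{\ps{u_0}{\fnzero}}^2=\|u_0\|_{L^2}^2\,,
\]
so $\|v\|_{L^2}=\|u_0\|_{L^2}=\lim_k\|u(t_k)\|_{L^2}$ and the weak convergence $u(t_k)\rightharpoonup v$ in $\Ltwo$ becomes strong. For $s>0$ I would upgrade this: from $u(t_k)\to v$ in $\Ltwo$, Proposition~\ref{resolvent cv} gives $L_{u(t_k)}\to L_v$ in the strong resolvent sense; the conserved quantity $\mathcal H_{2s}(u(t))=\|(L_{u(t)}+\la)^s u(t)\|_{L^2}^2$ (Lemma~\ref{loi de conservation}) passes to the limit because
\[
\|(L_v+\la)^s v\|_{L^2}^2=\sum_{n\geq0}(\la_n(u_0)+\la)^{2s}\va{\ps{v}{g_n}}^2=\sum_{n\geq0}(\la_n(u_0)+\la)^{2s}\va{\ps{u_0}{\fnzero}}^2=\mathcal H_{2s}(u_0)=\lim_k\|(L_{u(t_k)}+\la)^s u(t_k)\|_{L^2}^2\,.
\]
Arguing as in the proof of Corollary~\ref{GWP k geq 0} (via Lemma~\ref{troncature} and the uniform $H^s$ bound above), $(L_{u(t_k)}+\la)^s u(t_k)\rightharpoonup(L_v+\la)^s v$ in $\Ltwo$, and together with the norm convergence just obtained this weak convergence is strong; since moreover $(L_{u(t_k)}+\la)^s v\to(L_v+\la)^s v$ in $\Ltwo$ by Lemma~\ref{troncature}, Proposition~\ref{controle loi de conservation} gives
\[
\|u(t_k)-v\|_{H^s}\leq C\,\|(L_{u(t_k)}+\la)^s(u(t_k)-v)\|_{L^2}\leq C\Big(\|(L_{u(t_k)}+\la)^s u(t_k)-(L_v+\la)^s v\|_{L^2}+\|(L_{u(t_k)}+\la)^s v-(L_v+\la)^s v\|_{L^2}\Big)\longrightarrow 0\,,
\]
so $u(t_k)\to v$ in $H^s_+(\T)$ and $\mathcal{O}$ is relatively compact.

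The step I expect to be the main obstacle — indeed the only genuinely delicate one — is the identification carried out in the second paragraph: one must show the weak limit $v$ carries the same spectrum and the same Birkhoff moduli $\va{\ps{\,\cdot\,}{\fnt}}$ as $u_0$, which is what forces $\mathcal H_{2s}(v)=\mathcal H_{2s}(u_0)$ and thereby turns weak convergence into strong convergence. This hinges on the uniform $H^{\frac12}$ control of the eigenfunctions $\fnt$ produced by Lemma~\ref{inegality Pi(u bar h)}, which in turn relies crucially on the strict smallness $\|u_0\|_{L^2}<1$ (exactly as in Proposition~\ref{fn[t]}); this is precisely what rules out any loss of compactness at the level of the eigenfunctions as $t\to\infty$.
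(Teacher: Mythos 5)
Your proof is correct and follows essentially the same route as the paper's: uniform bounds from the conserved quantities, the uniform $H^{\frac12}$ control on the eigenfunctions $f_n^{\,t}$ (via Lemma~\ref{inegality Pi(u bar h)} and $\|u_0\|_{L^2}<1$) giving strong $L^2$ convergence to an orthonormal family, Bessel's inequality to upgrade weak to strong convergence in $L^2_+(\T)$, and then strong resolvent convergence, Lemma~\ref{troncature} and the spectral identity for $\ps{(L+\la)^{2s}\cdot}{\cdot}$ to upgrade to $H^s_+(\T)$. The only (harmless) deviations are that you pass to the limit in the moduli of the coordinates directly, thereby avoiding the paper's Cantor diagonal extraction of the phases, and that your identification of $(g_n)$ as the full eigenbasis of $L_v$ should be deferred until after the $s=0$ step, since invoking Proposition~\ref{Lipschitz} and the limit passage of Proposition~\ref{fn[0]} there presupposes strong $L^2$ convergence of the potentials, which is exactly what the paper (and the rest of your argument) only uses once it has been established.
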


	\begin{proof}  Let    $(t_n)\subseteq~\R$ such that $t_n\to\infty\,.$
		\vskip0.1cm
		\noindent
		\underline{\textbf{Step 1} : $s=0\,$. }
		By  Theorem~\ref{extension du flot a L2}\,,
		\be\label{utn=u0 norme L2}
		\|u(t_n)\|_{L^2}=\|u_0\|_{L^2}\,.
		\ee
		Then, $\e\, \tilde{u}\in L^2_+(\T)$ such that, up to a subsequence, 
		$$
		u(t_n)\rightharpoonup\tilde{u} \text{ in } \Ltwo
		\qquad \text{ and } \qquad 
		\|\tilde{u}\|_{L^2}\leq \|u_0\|_{L^2}\,.
		$$
		In order to obtain the strong convergence $u(t_n)\to \tilde{u}$ in $\Ltwo\,,$ all it remains is to show that
		$
		\|u(t_n)\|_{L^2}\to \|\tilde{u}\|_{L^2} 
		$\,, or by \eqref{utn=u0 norme L2}\,, $$\|u_0\|_{L^2}=\|\tilde{u}\|_{L^2}\,.$$
		Observe that we already have  $\|u_0\|_{L^2}\geq\|\tilde{u}\|_{L^2}$\,. 
		Now, to prove  $\|u_0\|_{L^2}\leq\|\tilde{u}\|_{L^2}\,,$ recall by Corollary~\ref{fn base + identite}\,, 
		\be\label{coordonnée de u dans fn(tn)}
		\ps{u(t_n)}{f_m^{\,t_n}}=\ps{u_0}{f^{\,0}_m}\eee^{-it_n\,\la_m^2(u_0)\,}\,, 
		\ee
		where $(f_m^{\,t_n})$ is the orthonormal basis of $\Ltwo$ constituted of the eigenfunctions of $L_{u(t^n)}\,.$
		The idea is to pass to the limit as $t_n\to\infty$ in the above identity and conclude by using Bessel's identity.
		First, we have $u(t_n)\rightharpoonup\tilde{u}$ in $\Ltwo$\,. Second, notice that the $(f_m^{\,t_n})$ converges strongly in $\Ltwo$ as $t_n\to\infty$ to an orthonormal family denoted by $(g_m)$\,. Indeed,  by definition of $L_{u(t_n)}$\,,
		$$
		\la_m(u_0)+\|T_{\,\overline{u(t_n)}}\,f_m^{\,t_n}\|_{L^2}^2=\|f_m^{\,t_n}\|_{\dot{H}^{\frac{1}{2}}}^2\,,
		$$
		leading  to $\|f_m^{\,t_n}\|_{H^{\frac{1}{2}}}\lesssim\frac{\la_m(u_0)+\|u_0\|_{L^2}^2}{1-\|u_0\|_{L^2}^2}$ for all $m$\,, thanks to Lemma~\ref{inegality Pi(u bar h)}\,. 
		Hence, by Rellich--Kondrachov's Theorem\,, $f_m(t_n)\to g_m$ in $\Ltwo$ as $t_n\to\infty\,.$  
		Third, using Cantor diagonalization procedure, one can extract a subsequence $t_n\to\theta_m\;\mrm{mod}(\frac{2\pi}{\la_m^2})$\,, as the circle is compact. 
		Hence, by passing to the limit in \eqref{coordonnée de u dans fn(tn)}, we obtain
		$$
		\ps{\tilde{u}}{g_m}=\ps{u_0}{f^{\,0}_m}\eee^{-i\la_m^2(u_0)\theta_m}\,.
		$$
		As a result, using Bessel's inequality, we conclude 
		$$
		\|\tilde{u}\|_{L^2}^2
		\geq
		\sum_{n=0}^\infty\va{\ps{\tilde{u}}{g_n}}^2
		=
		\sum_{n=0}^\infty\va{\ps{u_0}{f_m^0}}^2
		=
		\|u_0\|_{L^2}^2\,.
		$$
		Consequently, $ \|u(t_n)\|_{L^2}\to \|\tilde{u}\|_{L^2} $ and thus $u(t_n)\to \tilde{u}$ in $\Ltwo\,.$
		\vskip0.1cm
		\noindent
		\underline{\textbf{Step 2} : $s>0\,$. } By inequality~\eqref{u Hs} of the proof of Corollary~\ref{GWP k geq 0}\,, we have $\|u(t_n)\|_{H^s}\lesssim \|u_0\|_{H^s}$ leading to 
		\[
		u(t_n)\rightharpoonup \tilde{u} \text{ in } H^s_+(\T)
		\qquad \text{ and }\qquad
		\|\tilde{u}\|_{H^s}\lesssim \|u_0\|_{H^s}\,.
		\]
		In particular, $u(t_n)\to \tilde{u}$ in $L^2_+(\T)$\,. Then, in view of Remark~\ref{r>3/2}\,,
		\begin{align}\label{u(tn)-tilde[u]}
			\|u(t_n)-\tilde{u}\|_{H^s}^2
			\lesssim &\,\|(L_{u(t_n)}+\la\Id)^s(u(t_n)-\tilde{u})\|_{L^2}^2
			\\
			=&\,\|(L_{u(t_n)}+\la\Id)^s \,u(t_n)\|_{L^2}^2
			+\|(L_{u(t_n)}+\la\Id)^s \,\tilde{u}\|_{L^2}^2\notag
			\\
			&\,-2\mrm{Re}\ps{(L_{u(t_n)}+\la)^s\, u(t_n)}{(L_{u(t_n)}+\la)^s\, \tilde{u}}\,,\notag 
		\end{align}
		where by the second point of Remark~\ref{extension conservation laws}\,, 
		\begin{align*}
			\|(L_{u(t_n)}+\la\Id)^s \,u(t_n)\|_{L^2}^2=
			\,\|(L_{u_0}+\la)^s \,u_0\|_{L^2}^2\,.
		\end{align*}
		Besides, as $u(t_n)\to \tilde{u}$ in $\Ltwo$\,, then  $L_{u(t_n)}\to L_{\tilde{u}}$ in the strong resolvent sense thanks to  Proposition~\ref{resolvent cv}\,. Hence, by functional calculus (see Lemma~\ref{troncature}), we infer 
		$$
		\begin{cases}
			(L_{u(t_n)}+\la)^s\, \tilde{u} \,\to
			\, (L_{\tilde{u}}+\la)^s\,\tilde{u}\; \text{ in }\, \Ltwo\,,
			\\
			(L_{u(t_n)}+\la)^s\, u(t_n) \rightharpoonup
			(L_{\tilde{u}}+\la)^s \,\tilde{u}\, \text{ in } \Ltwo\,,
		\end{cases}
		$$
		as $n\to\infty\,.$  Therefore, by passing to the limit in \eqref{u(tn)-tilde[u]}, 
		we deduce
		$$
		\lim_{n\to+\infty}\|u(t_n)-\tilde{u}\|_{H^s}^2
		\lesssim
		\|(L_{u_0}+\la)^s u_0\|_{L^2}^2
		-\|(L_{\tilde{u}}+\la)^s \tilde{u}\|_{L^2}^2\,,
		$$ where the right--hand side vanishes. Indeed, by Corollary~\ref{ln[0]=ln[t]} and Proposition~\ref{Lipschitz}\,, $\la_n(\tilde{u})=\la_n(u_0)$\,. Hence,
		\begin{align*}
			\ps{(L_{\tilde{u}}+\la)^{2s}\tilde{u}}{\tilde{u}}
			=&\,\sum_{n\geq 0}(\la_n(u_0)+\la)^{2s}\va{\ps{\tilde{u}}{g_n}}^2
			=\sum_{n\geq 0}(\la_n(u_0)+\la)^{2s}\va{\ps{u_0}{\fnzero}}^2
			\\
			=&\,\ps{(L_{u_0}+\la)^{2s}u_0}{u_0}\,,
		\end{align*}
		where $(g_n)$ is the orthonormal family of $\Ltwo$ found in Step 1. Nevertheless, since $u(t_n)\to\tilde{u}$ in $\Ltwo\,,$ one could show as in Corollary~\ref{fn base + identite} that this orthonormal family is indeed an orthonormal basis of $\Ltwo$ by proving that the $(g_n)$ constitutes all the eigenfunction of the self--adjoint operator $L_{\tilde{u}}\,.$ As a consequence,
		$$
		\|(L_{u_0}+\la)^s\, u_0\|_{L^2}^2
		=\|(L_{\tilde{u}}+\la)^s\, \tilde{u}\|_{L^2}^2\,,
		$$
		and thus $\|u(t_n)-\tilde{u}\|_{H^s}^2\to 0\,,$ as $n\to\infty\,.$
	\end{proof}
	~\\

	\section{The Calogero--Sutherland DNLS defocusing equation (CS\texorpdfstring{$^-$}{-})}
	\label{defocusing}
	
	\vskip0.2cm
	In this section, we consider the defocusing equation of \eqref{CS} 
	\be\tag{CS$^-$}\label{CS-}
	i\partial_tu+\partial_x^2u-2D_+(|u|^2)u=0\,.
	\ee
	Note that by adapting the argument of \cite[Proposition 2.1]{GL22} to the defocusing equation, one can infer the local well--posedness of the \eqref{CS-} problem in $H^s_+(\T)$ for $s>\frac{3}{2}\,$. And we expect that one can go down to $s>\frac12$ by following \cite{deMP10}\,.
	~\\
	
	\textit{Below are a series of lemmas, propositions, and theorems  that can be proved similarly to their analogs in the focusing case. }
	Again, the integrable methods are the main ingredients to conclude. The first proposition is to announce that the defocusing equation of \eqref{CS-} enjoys also a Lax pair formalism. 
	
	\begin{prop}[Lax pair for \eqref{CS-}]
		Let $u\in\mathcal{C}([-T,T]\,,H^s_+(\T))\,,$ $s>\frac{3}{2}\,,$ be a solution of \eqref{CS-}\,. There exist two operators  
		$$
		\tilde{L}_u=D \,\textcolor{red}{+}\, T_uT_{\overline u}\,, \qquad\tilde{B}_u=\,\textcolor{red}{-}T_uT_{\partial_x\overline u}\,\textcolor{red}{+}\,T_{\partial_xu}T_{\overline u} +i(T_uT_{\overline u})^2
		$$
		satisfying  the Lax equation 
		$$
		\frac{d\tilde{L}_u}{dt}=[\tilde{B}_u,\tilde{L}_u]\,.
		$$ 
	\end{prop}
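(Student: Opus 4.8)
The plan is to run the computation from the proof of Proposition~\ref{The lax pair prop} essentially verbatim, carefully tracking the sign changes built into $\tilde L_u = D + T_u T_{\overline u}$ and $\tilde B_u = -T_u T_{\partial_x\overline u} + T_{\partial_x u}T_{\overline u} + i(T_u T_{\overline u})^2$. As announced at the start of this section, everything is parallel to the focusing case, so I would not reproduce all the Leibniz-rule expansions but rather isolate where the signs enter.

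First I would rewrite \eqref{CS-} as an evolution equation for the potential. Since $D_+ = -i\partial_x\Pi$, the equation $i\partial_t u + \partial_x^2 u - 2D_+(|u|^2)u = 0$ becomes $\partial_t u = i\partial_x^2 u - 2\partial_x\Pi(|u|^2)\cdot u$, hence $\partial_t\overline u = -i\partial_x^2\overline u - 2\partial_x\overline{\Pi(|u|^2)}\cdot\overline u$. Plugging these into $\tfrac{d}{dt}\tilde L_u(h) = T_{\partial_t u}T_{\overline u}h + T_u T_{\partial_t\overline u}h$, and using, as in the derivation of \eqref{dLu/dt}, that $u$ and $\partial_x\Pi(|u|^2)$ lie in the Hardy space (so the outermost Toeplitz operator reduces to a multiplication), one gets
\[
\frac{d\tilde L_u(h)}{dt} = -i\big[T_u T_{\partial_x^2\overline u} - T_{\partial_x^2 u}T_{\overline u}\big](h) - 2u\Big[\partial_x\Pi(|u|^2)\cdot\Pi(\overline u h) + \Pi\big(\partial_x\overline{\Pi(|u|^2)}\cdot\overline u h\big)\Big].
\]
Compared with \eqref{dLu/dt}, only the sign in front of the second-derivative bracket has flipped, while the $2u[\,\cdots]$ term is unchanged.

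For the commutator I would avoid re-expanding everything and instead use the relation with the focusing operators. Writing $P := T_u T_{\partial_x\overline u} - T_{\partial_x u}T_{\overline u}$ and $Q := T_u T_{\overline u}$, one has $B_u = P + iQ^2$, $L_u = D - Q$, while $\tilde B_u = -P + iQ^2$ and $\tilde L_u = D + Q$; since $[Q^2,Q] = 0$, a one-line computation gives $[\tilde B_u,\tilde L_u] = [B_u,L_u] - 2[P,D]$. The Leibniz identities already recorded in the proof of Proposition~\ref{The lax pair prop} give $[P,D] = i\big(T_u T_{\partial_x^2\overline u} - T_{\partial_x^2 u}T_{\overline u}\big)$, so, feeding in \eqref{[Bu,Lu]},
\[
[\tilde B_u,\tilde L_u](h) = -i\big[T_u T_{\partial_x^2\overline u} - T_{\partial_x^2 u}T_{\overline u}\big](h) - 2u\cdot\Pi\big(\partial_x|u|^2\cdot\Pi(\overline u h)\big).
\]
Comparing the two displays, the identity $\tfrac{d}{dt}\tilde L_u = [\tilde B_u,\tilde L_u]$ reduces to exactly the algebraic relation \eqref{decomposition in Pi} proved in the course of Proposition~\ref{The lax pair prop}, which does not see any of these signs, and this closes the argument. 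I do not expect a genuine analytic obstacle: the only delicate point is the sign bookkeeping in the two computations, and the fact that the decisive step is the sign-free identity \eqref{decomposition in Pi} is precisely why the same scheme yields a Lax pair in both the focusing and defocusing cases — a point worth underlining at the end of the proof.
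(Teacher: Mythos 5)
Your argument is correct and is essentially the proof the paper intends: the paper gives no separate computation for \eqref{CS-}, saying only that it is proved as in the focusing case (Proposition~\ref{The lax pair prop}), which is exactly what you carry out, and your sign bookkeeping --- the flip of the second--derivative bracket in both $\frac{d}{dt}\tilde L_u$ and the commutator, with the decisive identity \eqref{decomposition in Pi} untouched --- checks out against \eqref{dLu/dt} and \eqref{[Bu,Lu]}. The reduction $[\tilde B_u,\tilde L_u]=[B_u,L_u]-2[P,D]$ together with $[P,D]=i\big(T_uT_{\partial_x^2\bar u}-T_{\partial_x^2 u}T_{\bar u}\big)$ is a tidy way to avoid re-expanding the whole commutator.
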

	
	\begin{lemma}
		Given $u\in \mathcal C([-T,T],H^r_+(\T))\,,$ $r>\frac32\,,$ a solution of \eqref{CS-} equation, then 
		$$
		\p_t u=\tilde{B}_u u -i\tilde{L}^2_uu\,.
		$$
		As a consequence, the quantities $\tilde{\mathcal H}_s(u):=\langle(\tilde{L}_u+\lambda)^su\mid u\rangle\,,$ $\lambda>0\,,$ are  conserved by the flow $\mathcal S^-(t)$ of  \eqref{CS-} for all $0\leq s\leq 2r\,.$
	\end{lemma}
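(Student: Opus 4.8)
The plan is to mirror, with the appropriate sign bookkeeping, the proofs of Lemma~\ref{Reformulation de CS à l'aide des op. de Lax} and Lemma~\ref{loi de conservation}. First I would establish the reformulation $\p_t u = \tilde{B}_u u - i\tilde{L}_u^2 u$. Writing \eqref{CS-} as $\p_t u = i\p_x^2 u - 2iD_+(|u|^2)u$ and expanding $\tilde{B}_u u$ with $\tilde{B}_u = -T_uT_{\p_x\overline u} + T_{\p_x u}T_{\overline u} + i(T_uT_{\overline u})^2$, a direct computation (using $T_u f = \Pi(uf)$ and $\p_x = iD$) yields
\[
\p_t u - \tilde{B}_u u = -i\Big[D^2 u + 2\big(D\Pi(|u|^2)\big)u - u\,\Pi\big((D\overline u)\,u\big) + (Du)\,\Pi(|u|^2) + (T_uT_{\overline u})^2 u\Big].
\]
On the other hand $\tilde{L}_u^2 u = D^2 u + D(T_uT_{\overline u}u) + T_uT_{\overline u}(Du) + (T_uT_{\overline u})^2 u$, and expanding with the Leibniz rule together with the commutation $D\Pi = \Pi D$ (so that $D\Pi(|u|^2) = \Pi(\overline u\,Du) + \Pi((D\overline u)\,u)$) shows that this equals the bracket above; here the \emph{plus} sign in $\tilde{L}_u = D + T_uT_{\overline u}$ is exactly what makes the cross terms of $\tilde{L}_u^2$ fit. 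Hence $\p_t u = \tilde{B}_u u - i\tilde{L}_u^2 u$, with the same structure as in the focusing case.

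For the conservation laws I would first record — as in Proposition~\ref{Lu self-adjoint}, by Kato--Rellich and Rellich--Kondrachov — that $(\tilde{L}_u, H^1_+(\T))$ is self--adjoint with compact resolvent, hence discrete spectrum (it is the sum of the positive self--adjoint operator $D$ and the bounded self--adjoint operator $T_uT_{\overline u}$), and that $\tilde{B}_u$ is a bounded skew--symmetric operator. Then I would introduce the one--parameter family $U(t)$ solving $\frac{d}{dt}U(t) = \tilde{B}_{u(t)}U(t)$, $U(0) = \Id$; since $\tilde{B}_u$ is skew--adjoint, $U(t)$ is unitary, and combining the Lax equation $\frac{d\tilde{L}_u}{dt} = [\tilde{B}_u, \tilde{L}_u]$ with the definition of $U(t)$ gives $\frac{d}{dt}\big(U(t)^*\tilde{L}_{u(t)}U(t)\big) = 0$, i.e. $U(t)^*\tilde{L}_{u(t)}U(t) = \tilde{L}_{u_0}$, and therefore $U(t)^*(\tilde{L}_{u(t)}+\lambda)^s U(t) = (\tilde{L}_{u_0}+\lambda)^s$ for all $\lambda > 0$ and $0 \le s \le 2r$ by the functional calculus.

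Next, using the reformulation of the first step, $\frac{d}{dt}\big[U(t)^*u(t)\big] = -U(t)^*\tilde{B}_{u(t)}u(t) + U(t)^*\p_t u(t) = -iU(t)^*\tilde{L}_{u(t)}^2 u(t) = -i\tilde{L}_{u_0}^2\,U(t)^*u(t)$, whence $U(t)^*u(t) = \eee^{-it\tilde{L}_{u_0}^2}u_0$. Consequently
\[
\tilde{\mathcal H}_s(u(t)) = \ps{(\tilde{L}_{u_0}+\lambda)^s U(t)^*u(t)}{U(t)^*u(t)} = \ps{(\tilde{L}_{u_0}+\lambda)^s\eee^{-it\tilde{L}_{u_0}^2}u_0}{\eee^{-it\tilde{L}_{u_0}^2}u_0} = \tilde{\mathcal H}_s(u_0),
\]
because $(\tilde{L}_{u_0}+\lambda)^s$ commutes with the unitary group $\eee^{-it\tilde{L}_{u_0}^2}$; all quantities are finite since $u_0 \in H^r_+(\T) \subseteq \mathrm{Dom}\big((\tilde{L}_{u_0}+\lambda)^{s/2}\big)$ for $s \le 2r$, $\tilde{L}_{u_0}+\lambda$ being comparable to $\langle D\rangle$. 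The only real obstacle is the sign-careful algebra of the first step: one must check that every sign flip occurring in passing from $(L_u, B_u)$ to $(\tilde{L}_u, \tilde{B}_u)$ is exactly compensated by the sign flip of the nonlinear term in \eqref{CS-}, so that $\p_t u = \tilde{B}_u u - i\tilde{L}_u^2 u$ retains the same shape as in the focusing case; once this is done, the conservation argument is a verbatim transcription of Lemma~\ref{loi de conservation}.
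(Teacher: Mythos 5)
Your proposal is correct and follows exactly the route the paper intends: the paper gives no separate proof for the defocusing lemma, stating instead that it is proved like its focusing analogs, and your argument is precisely that transcription — the sign check showing $\p_t u-\tilde{B}_u u=-i\tilde{L}_u^2u$ mirrors Lemma~\ref{Reformulation de CS à l'aide des op. de Lax}, and the conjugation by the unitary family $U(t)$ with $U(t)^*\tilde{L}_{u(t)}U(t)=\tilde{L}_{u_0}$ and $U(t)^*u(t)=\eee^{-it\tilde{L}_{u_0}^2}u_0$ mirrors Lemma~\ref{loi de conservation}. No gaps.
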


	\begin{Rq}
		Expanding the conservation laws $\tilde{\mathcal{H}}_k(u)$ for all $k\in\Nzero\,,$ we have
		\begin{align*}
			\tilde{\mathcal H}_0(u)&=\langle u(t) \mid u(t) \rangle=\|u(t)\|_{L^2}^2=\|u_0\|_{L^2}
			\\
			\tilde{\mathcal H}_1(u)&=\langle \tilde{L}_{u(t)} u(t)
			\mid u(t) \rangle=\|u(t)\|_{\dot{H}^{1/2}}^2+\|T_{\bar u(t)} u(t)\|_{L^2}^2\geq \|u(t)\|_{\dot{H}^{\frac12}}^2
			\\ 
			\tilde{\mathcal H}_2(u)&=\|\tilde{L}_{u(t)}u(t)\|_{L^2}^2
			\geq(1-\eps)\|Du(t)\|_{L^2}^2 +(1-C_{\eps})\|T_{u(t)}T_{\bar u(t)}u(t)\|_{L^2}^2\geq \|u(t)\|_{\dot{H}^1}^2-C(\|u\|_{H^{\frac12}})\,,
			\\
			&\hskip4cm\vdots
		\end{align*}
  thanks to Young's and Sobolev's inequalities.
		Unlike the focusing case, here
		we deduce the uniform control of the growth of Sobolev norms of the solution $u$ by the conservation laws,
		without requiring any additional condition of smallness on the initial data $u_0\,.$  Therefore, Proposition~\ref{controle loi de conservation} holds in the defocusing case for all $u_0\in H^r_+(\T)\,,$ $r>\frac32$.
	\end{Rq}
	
	As a result, we state the following theorem  which is the analog of Theorem~\ref{GWP k geq 4} but for equation~\eqref{CS-}.
	
	\begin{theorem} 
		For all $s>\frac32\,,$
		let $u_0\in H^s_+(\T)$\,. There exists  a unique global solution $u\in \mathcal C(\R,H^s_+(\T))$ of the defocusing equation \eqref{CS-}, satisfying at $t=0$, $u(0,\cdot)=u_0\,.$ Furthermore, for all $s>\frac32\,,$
		\[
		\sup_{t\in\R}\|u(t)\|_{H^s}\;\leq C\;,
		\]
		where $C=C(\|u_0\|_{H^s})>0$ is a positive constant.
	\end{theorem}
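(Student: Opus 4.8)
The plan is to mirror, essentially line for line, the proof of Theorem~\ref{GWP k geq 4}, the key simplification being that in the defocusing case no $L^2$--smallness of the datum is needed. First I would record the local theory: adapting the iterative Kato--type scheme of \cite[Proposition~2.1]{GL22} (and, in lower regularity, the analysis of \cite{deMP10}) to the defocusing nonlinearity on $\T$ produces, for every $u_0\in H^s_+(\T)$ with $s>\frac32$, a unique solution $u\in\mathcal C(\mrm{I}_{\max},H^s_+(\T))$ depending continuously on $u_0$, together with the standard blow--up alternative: if the maximal time of existence $T_+$ is finite, then $\|u(t)\|_{H^s}\to\infty$ as $t\uparrow T_+$ (and symmetrically at $T_-$). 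Hence it suffices to establish the a priori bound $\sup_{t\in\mrm{I}_{\max}}\|u(t)\|_{H^s}\le C(\|u_0\|_{H^s})$, which yields both global existence and the asserted estimate.

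Next I would extract the conservation laws from the Lax pair $(\tilde L_u,\tilde B_u)$ established above. Since $\tilde B_u=-T_uT_{\p_x\bar u}+T_{\p_xu}T_{\bar u}+i(T_uT_{\bar u})^2$ is skew--adjoint, the propagator $U(t)$ solving $\frac{d}{dt}U(t)=\tilde B_{u(t)}U(t)$, $U(0)=\Id$, is unitary, and --- using the reformulation $\p_tu=\tilde B_uu-i\tilde L_u^2u$ --- one obtains the isospectral identity $U(t)^*\tilde L_{u(t)}U(t)=\tilde L_{u_0}$ and $U(t)^*u(t)=\eee^{-it\tilde L_{u_0}^2}u_0$, exactly as in Section~\ref{The Lax pair formalism}. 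Therefore $\langle(\tilde L_{u(t)}+\la)^\sigma u(t)\mid u(t)\rangle=\langle(\tilde L_{u_0}+\la)^\sigma u_0\mid u_0\rangle$ for $0\le\sigma\le 2s$, and in particular $\|u(t)\|_{L^2}=\|u_0\|_{L^2}$ is conserved; this single conserved quantity is all the ``smallness substitute'' we need.

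The heart of the matter is then the defocusing analogue of Proposition~\ref{controle loi de conservation}: for every $\la\gg0$ there exists $C=C(\|u_0\|_{H^{\frac{\lfloor 2s\rfloor}{2}}},\la)$, independent of $t$, such that $\frac1C\|f\|_{H^\sigma}\le\|(\tilde L_{u(t)}+\la)^\sigma f\|_{L^2}\le C\|f\|_{H^\sigma}$ for $0\le\sigma\le\frac{\lfloor 2s\rfloor+1}{2}$. I would prove this by the same induction on intervals of length $\frac12$. The base case $\sigma=\frac12$ is where the change of sign pays off: since $\tilde L_u=D+T_uT_{\bar u}$,
\[
\|(\tilde L_{u(t)}+\la)^{1/2}f\|_{L^2}^2=\ps{Df}{f}+\Ldeux{T_{\bar u}f}^2+\la\Ldeux{f}^2\ \ge\ \min(1,\la)\,\|f\|_{H^{1/2}}^2,
\]
so the lower bound holds with \emph{no} condition on $\|u_0\|_{L^2}$, while the upper bound follows from Lemma~\ref{inegality Pi(u bar h)} together with conservation of $\|u(t)\|_{L^2}$. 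From $\sigma=\frac12$ and the conserved $\langle(\tilde L_u+\la)u\mid u\rangle$ one gets $\sup_t\|u(t)\|_{H^{1/2}}<\infty$, hence $\sup_t\|u(t)\|_{L^p}<\infty$ for all $p\in[2,\infty)$ by Sobolev embedding; the induction then propagates to $\sigma\in[\frac12,1]$, then $\sigma\in[1,\frac32]$, and so on, exactly as written in Proposition~\ref{controle loi de conservation}, with the sole bookkeeping change that every occurrence of $1-\|u\|_{L^2}^2$ becomes $1+\|u\|_{L^2}^2$, which only improves the coercivity. Finally, taking $\sigma=s$ (permissible since $\frac{\lfloor 2s\rfloor+1}{2}\ge s$), evaluating at $f=u(t)$ and invoking the conservation law of the previous paragraph gives
\[
\tfrac1C\|u(t)\|_{H^s}\le\|(\tilde L_{u(t)}+\la)^s u(t)\|_{L^2}=\|(\tilde L_{u_0}+\la)^s u_0\|_{L^2}\le C\|u_0\|_{H^s},
\]
so $\sup_{t\in\mrm{I}_{\max}}\|u(t)\|_{H^s}\le C(\|u_0\|_{H^s})$; the blow--up alternative then forces $\mrm{I}_{\max}=\R$.

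The only genuinely technical point is re--running the induction of Proposition~\ref{controle loi de conservation} and checking that each intermediate estimate retains the expected polynomial dependence on $\|u(t)\|_{H^1},\|u(t)\|_{H^{3/2}},\dots$, all of which are already controlled by the preceding steps. I do not anticipate any new obstacle there: the potentially dangerous Toeplitz term $T_uT_{\bar u}$ now enters $\tilde L_u$ with a favourable sign, so the defocusing argument is strictly easier than its focusing counterpart and, in particular, requires no hypothesis on the size of $u_0$ in $L^2$.
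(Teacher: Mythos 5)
Your proposal is correct and follows essentially the same route as the paper, which proves this theorem precisely by transplanting the focusing argument (Lax pair $(\tilde L_u,\tilde B_u)$, conservation of $\langle(\tilde L_u+\la)^\sigma u\mid u\rangle$, and the defocusing analogue of Proposition~\ref{controle loi de conservation}) and observing that the sign $+T_uT_{\bar u}$ makes the quadratic form coercive without any $L^2$--smallness, exactly as in your base case $\sigma=\tfrac12$. The only cosmetic difference is that the paper records this via the expanded conservation laws $\tilde{\mathcal H}_0,\tilde{\mathcal H}_1,\tilde{\mathcal H}_2,\dots$ in a remark rather than re-running the induction in detail, which is what you spell out.
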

	
	As for the focusing case, the defocusing Calogero--Sutherland DNLS has an explicit solution.
	
	\begin{lemma}
		Let $u_0\in\mathcal{C}(\R,H^s_+(\T))\,,$ $s>\frac32$ then the solution of the defocusing Calogero--Sutherland DNLS equation~\eqref{CS-defocusing} is given by
		\[
		u(t,z)=\ps{(\Id-z\eee^{-it}\eee^{-2it\tilde{L}_{u_0}}S^*)^{-1}\,u_0}{1}\,.
		\]
	\end{lemma}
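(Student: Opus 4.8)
The plan is to run the proof of Proposition~\ref{The inverse dynamical formula} verbatim, replacing the pair $(L_u,B_u)$ by $(\tilde L_u,\tilde B_u)$ everywhere. First, for $u(t,\cdot)\in H^s_+(\T)$ with $s>\frac32$, the Neumann series for $(\Id-zS^*)^{-1}$ gives the Hardy-space inversion formula $u(t,z)=\ps{(\Id-zS^*)^{-1}u(t)}{1}$, $z\in\D$, exactly as in \eqref{formule inversion spectral Hardy}. Next one introduces the propagator $\tilde U(t)$ solving $\frac{d}{dt}\tilde U(t)=\tilde B_{u(t)}\tilde U(t)$, $\tilde U(0)=\Id$, as in \eqref{Cauchy problem unitary operator}; since $\tilde B_u$ is a bounded skew-adjoint operator (same argument as in Proposition~\ref{Lu self-adjoint}, using its explicit form), $\tilde U(t)$ is unitary, and the Lax equation $\frac{d}{dt}\tilde L_{u(t)}=[\tilde B_{u(t)},\tilde L_{u(t)}]$ yields the intertwining $\tilde U(t)^*\tilde L_{u(t)}\tilde U(t)=\tilde L_{u_0}$. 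Applying $\tilde U(t)^*$ inside the inner product reduces the problem to computing $\tilde U(t)^*1$, $\tilde U(t)^*u(t)$ and $\tilde U(t)^*S^*\tilde U(t)$.

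The only genuinely new computational input is the defocusing analog of Lemma~\ref{commutator}. Using $[S^*,T_u]=\ps{\,\cdot}{1}S^*u$, the fact that $S^*$ commutes with $T_{\bar u}$, and the adjoint Leibniz rule $S^*D=(D+\Id)S^*$, one checks — carefully tracking the two sign flips $D-T_uT_{\bar u}\rightsquigarrow D+T_uT_{\bar u}$ and $B_u\rightsquigarrow\tilde B_u$ — that
\[
[S^*,\tilde L_u]=S^*+\ps{\,\cdot}{u}S^*u\,,\qquad [S^*,\tilde B_u]=i\bigl(S^*\tilde L_u^2-(\tilde L_u+\Id)^2S^*\bigr)\,.
\]
The point worth emphasizing is that, although $[S^*,\tilde L_u]$ now carries the opposite sign in front of $\ps{\,\cdot}{u}S^*u$, this sign is absorbed upon substituting $\ps{\,\cdot}{u}S^*u=S^*\tilde L_u-\tilde L_uS^*-S^*$, so the commutator $[S^*,\tilde B_u]$ comes out with \emph{exactly} the same shape as in the focusing case. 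Similarly, a short direct computation from $\tilde L_u1=\ps{1}{u}u$ gives $\tilde B_u1=i\tilde L_u^21$, the defocusing version of the identity used for $B_u$.

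With these in hand the remaining steps are purely formal and identical to the focusing case. Differentiating, and using $\frac{d}{dt}\tilde U(t)^*=-\tilde U(t)^*\tilde B_{u(t)}$, the reformulation $\p_tu=\tilde B_uu-i\tilde L_u^2u$ (the lemma already stated in this section), the identity $\tilde B_u1=i\tilde L_u^21$, the commutators above, and the intertwining $\tilde U(t)^*\tilde L_{u(t)}=\tilde L_{u_0}\tilde U(t)^*$, one obtains the closed linear ODEs
\[
\tfrac{d}{dt}[\tilde U^*1]=-i\tilde L_{u_0}^2[\tilde U^*1]\,,\qquad \tfrac{d}{dt}[\tilde U^*u(t)]=-i\tilde L_{u_0}^2[\tilde U^*u(t)]\,,
\]
\[
\tfrac{d}{dt}[\tilde U^*S^*\tilde U]=i\bigl([\tilde U^*S^*\tilde U]\tilde L_{u_0}^2-(\tilde L_{u_0}+\Id)^2[\tilde U^*S^*\tilde U]\bigr)\,,
\]
whose solutions are $\tilde U^*1=\eee^{-it\tilde L_{u_0}^2}1$, $\tilde U^*u(t)=\eee^{-it\tilde L_{u_0}^2}u_0$, and $\tilde U^*S^*\tilde U=\eee^{-it(\tilde L_{u_0}+\Id)^2}S^*\eee^{it\tilde L_{u_0}^2}$, precisely as in \eqref{U(t)*1} and \eqref{U(t)*S*U(t)}. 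Inserting these into the inversion formula, expanding the resolvent as a Neumann series, and using $\eee^{it\tilde L_{u_0}^2}\eee^{-it(\tilde L_{u_0}+\Id)^2}=\eee^{-2it\tilde L_{u_0}}\eee^{-it}$ (spectral functions of the self-adjoint $\tilde L_{u_0}$ commute), the stray factors $\eee^{\pm it\tilde L_{u_0}^2}$ telescope and regroup each block into $\eee^{-it}\eee^{-2it\tilde L_{u_0}}S^*$, leaving $u(t,z)=\ps{(\Id-z\eee^{-it}\eee^{-2it\tilde L_{u_0}}S^*)^{-1}u_0}{1}$. The only real obstacle is the sign bookkeeping in the commutator step; once it is verified that the two identities retain their form, every other step is an unchanged transcription of Lemmas~\ref{commutator}, \ref{Reformulation de CS à l'aide des op. de Lax} and Proposition~\ref{The inverse dynamical formula}.
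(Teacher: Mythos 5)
Your proposal is correct and is exactly the argument the paper intends: the paper gives no separate proof for the defocusing lemma, stating only that it is proved ``similarly to the focusing case,'' i.e.\ by rerunning Proposition~\ref{The inverse dynamical formula} with $(\tilde L_u,\tilde B_u)$ in place of $(L_u,B_u)$. Your verification of the sign-sensitive steps — $[S^*,\tilde L_u]=S^*+\ps{\,\cdot}{u}S^*u$, the unchanged form of $[S^*,\tilde B_u]=i\bigl(S^*\tilde L_u^2-(\tilde L_u+\Id)^2S^*\bigr)$ after substituting $\ps{\,\cdot}{u}S^*u=S^*\tilde L_u-\tilde L_uS^*-S^*$, and $\tilde B_u1=i\tilde L_u^2\,1$ — is accurate, and the remaining steps transcribe correctly.
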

	\vskip0.2cm 
	In particular, using this explicit formula, we extend the flow $\mathcal{S}^-(t)$ continuously from $\Htwo$ to $H^s_+(\T)$\,, for $0\leq s\leq\frac32\;$. Therefore, we have :
	
	\begin{theorem*}\ref{th defocusing case} \textbf{.}
		The Calogero--Sutherland DNLS defocusing equation \eqref{CS-defocusing} is globally well--posed in
		$H^s_+(\T)$ for any $s\geq 0\,,$ in the sense of Remark~\ref{Rq GWP}\,. 
		In addition,
		for all $u_0\in H^s_+(\T)\,,$
		\[
		u(t,z)=\ps{(\Id-z\eee^{-it}\eee^{-2it\tilde{L}_{u_0}}S^*)^{-1}\,u_0}{1}\,, 
		\]
		is solution of the \eqref{CS-defocusing}--defocusing equation.
		Furthermore, the trajectories 
		\[
		\lracc{\mathcal{S}^-(t)u_0\,;\,t\in\R}
		\]
		are relatively compact in $ H^s_+(\T)\,.$
	\end{theorem*}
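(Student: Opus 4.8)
The plan is to mirror, almost verbatim, the treatment of the focusing equation carried out in Sections~\ref{The Lax pair formalism}--\ref{Proof of th 3 and 4}, the point being that the sign in $\tilde{L}_u=D+T_uT_{\overline u}$ is the \emph{favorable} one. First I would record the commutator identities analogous to Lemma~\ref{commutator}: from $[S^*,T_u]=\langle\,\cdot\mid 1\rangle S^*u$ and the adjoint Leibniz rule $S^*D=(D+\Id)S^*$ one gets $[S^*,\tilde{L}_u]=S^*+\langle\,\cdot\mid u\rangle S^*u$, and expanding $[S^*,\tilde{B}_u]$ exactly as in the focusing proof, the three sign flips in $\tilde{B}_u=-T_uT_{\partial_x\overline u}+T_{\partial_xu}T_{\overline u}+i(T_uT_{\overline u})^2$ conspire to produce the \emph{same} identity $[S^*,\tilde{B}_u]=i\bigl(S^*\tilde{L}_u^2-(\tilde{L}_u+\Id)^2S^*\bigr)$. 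Granting this, the computation of Proposition~\ref{The inverse dynamical formula} applies word for word: with $U(t)$ solving $\tfrac{d}{dt}U=\tilde{B}_{u(t)}U$, $U(0)=\Id$, one has $U(t)^*\tilde{L}_{u(t)}U(t)=\tilde{L}_{u_0}$, and then $U(t)^*u(t)=\eee^{-it\tilde{L}_{u_0}^2}u_0$, $U(t)^*1=\eee^{-it\tilde{L}_{u_0}^2}1$ and $U(t)^*S^*U(t)=\eee^{-it(\tilde{L}_{u_0}+\Id)^2}S^*\eee^{it\tilde{L}_{u_0}^2}$, so that $u(t,z)=\langle(\Id-zS^*)^{-1}u(t)\mid 1\rangle$ becomes the claimed $u(t,z)=\langle(\Id-z\eee^{-it}\eee^{-2it\tilde{L}_{u_0}}S^*)^{-1}u_0\mid 1\rangle$.

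For the global well--posedness in $H^s_+(\T)$, $s>\tfrac32$, I would first observe that the quadratic form of $\tilde{L}_u$ satisfies
\[
\langle(\tilde{L}_u+\lambda)f\mid f\rangle=\langle Df\mid f\rangle+\|T_{\overline u}f\|_{L^2}^2+\lambda\|f\|_{L^2}^2\;\geq\;\langle Df\mid f\rangle+\lambda\|f\|_{L^2}^2,
\]
so the lower bound in the analogue of Proposition~\ref{controle loi de conservation} comes for free and, crucially, \emph{unconditionally} in $u_0$; the matching upper bound follows from Lemma~\ref{inegality Pi(u bar h)} together with the conservation of $\|u(t)\|_{L^2}=\|u_0\|_{L^2}$ (which is $\tilde{\mathcal H}_0$). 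An induction on intervals of length $\tfrac12$ as in Proposition~\ref{controle loi de conservation}, now with \emph{no} smallness hypothesis (in every step the term $\|T_{\overline u}f\|_{L^2}^2$ and the term $2\lambda\,\mathrm{Re}\langle T_uT_{\overline u}f\mid f\rangle=2\lambda\|T_{\overline u}f\|_{L^2}^2$ appear with a good sign), then gives $\tfrac1C\|f\|_{H^s}\leq\|(\tilde{L}_{u(t)}+\lambda)^sf\|_{L^2}\leq C\|f\|_{H^s}$ for all $s\geq 0$; combined with the conservation of $\tilde{\mathcal H}_s(u)=\langle(\tilde{L}_u+\lambda)^su\mid u\rangle$ this yields $\sup_{t\in\R}\|u(t)\|_{H^s}\leq C(\|u_0\|_{H^s})$ and hence global well--posedness in $H^s_+(\T)$ for every $s>\tfrac32$, exactly as in Theorem~\ref{GWP k geq 4}.

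Finally, for the extension to $0\leq s\leq\tfrac32$ and for the relative compactness I would repeat Sections~\ref{section: Extension du flot} and~\ref{Proof of th 3 and 4} with $L_u$ replaced by $\tilde{L}_u$. The quadratic--form definition of $\tilde{L}_{u_0}$ for $u_0\in\Ltwo$ is immediate since $\|f\|_{\dot{H}^{1/2}}^2+\|T_{\overline{u_0}}f\|_{L^2}^2$ is already nonnegative; Proposition~\ref{Lipschitz}, Corollary~\ref{caracterisation spectre Lu0} and Proposition~\ref{resolvent cv} transfer unchanged, because the perturbative estimate~\eqref{perturbation} is only ever used to bound $\|T_{\overline u}f\|_{L^2}$ from above. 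The uniform $H^{1/2}$--bound on the eigenfunctions is now trivial: from $\tilde{L}_{u^\eps(t)}f_n^{\,\eps,t}=\lambda_n(u_0^\eps)f_n^{\,\eps,t}$ one gets $\|f_n^{\,\eps,t}\|_{\dot{H}^{1/2}}^2=\lambda_n(u_0^\eps)-\|T_{\overline{u^\eps(t)}}f_n^{\,\eps,t}\|_{L^2}^2\leq\lambda_n(u_0^\eps)$, so no condition $u_0\in\B{1}$ is required and Propositions~\ref{fn[0]}--\ref{fn[t]}, Lemma~\ref{ps u,fn}, Corollary~\ref{fn base + identite}, Theorem~\ref{extension du flot a L2} and Theorem~\ref{Compact} go through verbatim, giving the continuous extension of $\mathcal{S}^-(t)$ from $\Htwo$ to all $H^s_+(\T)$, $s\geq 0$, together with $\|u(t)\|_{L^2}=\|u_0\|_{L^2}$, the diagonal evolution $\langle u(t)\mid f_n^{\,t}\rangle=\langle u_0\mid f_n^{\,0}\rangle\,\eee^{-it\lambda_n^2(u_0)}$, and the relative compactness of $\lracc{\mathcal{S}^-(t)u_0\,;\,t\in\R}$ in $H^s_+(\T)$. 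The only step demanding genuine care is the algebraic verification of the commutator identity for $\tilde{B}_u$ in the first paragraph; once that is checked, no new analytic difficulty arises, since every estimate is, if anything, strictly easier than in the focusing case.
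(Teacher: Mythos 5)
Your proposal is correct and follows essentially the same route as the paper, which itself only sketches Section~\ref{defocusing} by asserting that every statement is proved as in the focusing case; your sign checks (the commutator identity $[S^*,\tilde B_u]=i\bigl(S^*\tilde L_u^2-(\tilde L_u+\Id)^2S^*\bigr)$, the automatic lower bound $\langle(\tilde L_u+\lambda)f\mid f\rangle\geq\langle Df\mid f\rangle+\lambda\|f\|_{L^2}^2$, and the unconditional $H^{1/2}$ bound on the eigenfunctions) are exactly the points where the favorable sign removes the smallness hypothesis, as the paper's remark on the conserved quantities $\tilde{\mathcal H}_k$ indicates. (Only trivial bookkeeping differs, e.g.\ $\tilde B_u$ carries two sign flips rather than three, and the bound $\la_n(u)\leq n$ in the Lipschitz argument must be replaced by $\la_n(u)\leq(1+\|u\|_{L^2}^2)n+\|u\|_{L^2}^2$ via Lemma~\ref{inegality Pi(u bar h)}, which changes nothing.)
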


	\vskip0.5cm
	\section{Final remarks and open problems}
	\label{Remarks}
	\vskip0.3cm
	Let us briefly discuss here some remarks related to the previous sections.
	\vskip0.3cm
	1. One interesting feature about the focusing Calogero--Sutherland DNLS equation is that it admits a rich dynamic in comparison to the defocusing equation. For instance, as we shall see \cite{Ba23}\,, the focusing equation has a wider collection of traveling wave solutions. 

	\vskip0.5cm
	2. The problem of global well--posedness
	of the focusing Calogero–Sutherland DNLS equation \eqref{CS+} without restriction on the initial data is wide open. Nevertheless,  we expect that the explicit solution (equation~\eqref{dynamical explicit formula L2})
	\be\label{u=formule inversion derniere section}
	u(t,z)
	=\, \ps{(\Id-z\eee^{-it}\eee^{-2itL_{u_0}}S^*)^{-1}\,u_0}{1}\,, 
	\ee
	is a key ingredient to answer this question. Indeed, writing for all $t\in\R$\,, $\,\Sigma_t$ the operator $S\eee^{2it L_{u_0}}\eee^{it}$\,, we have by \eqref{u=formule inversion derniere section} 
	\be\label{coordonnée en sigma n,t}
	u(t,z)
	= \, \sum_{n\geq 0}\ps{u_0}{\,\Sigma_t^n1}z^n\,.
	\ee
	Observe that, if $u_0$ belongs to the space $\mathcal{J}$ generated by the orthonormal family $\lracc{\Sigma_t^n1\,,\, n\geq 0}\,,$ then using  Parseval's identity on  \eqref{coordonnée en sigma n,t}\,, we infer
	\be\label{L2 norm u(t) =u0}
	\|u(t)\|_{L^2(\T)}=\|u_0\|_{L^2}\,, \qquad t\in\R\,,
	\ee
	leading to say that the set
	$
	\lracc{u(t)\,, t\in\R}
	$
	is relatively compact in $\Ltwo\,$. 
	Hence, the integer $N_\eta$ set out in inequality~\eqref{perturbation} is now independent of $t$\,, and thus applying  inequality~\eqref{perturbation} to \eqref{control H12 de la solution}\,, we obtain for all $\eta>0\,,$
	\[
	\|(L_{u(t)}+\la)^\frac12 f\|_{L^2}^2
	\geq (1-2\eta^2)\|f\|_{\dot{H}^\frac12}^2
	+(\la-2\eta^2-2N^2_{\eta}\|u_0\|_{L^2})\|f\|_{L^2}^2\,,
	\]
	instead of having inequality~\eqref{inequality leading to assumption on the initail data} :
	\[
	\|(L_{u(t)}+\la)^\frac12 f\|_{L^2}^2
	\geq (1-\|u\|_{L^2}^2)\|f\|_{\dot{H}^\frac12}^2
	+(\la-\|u\|_{L^2}^2)\|f\|_{L^2}^2\,.
	\]
	Therefore, using \eqref{perturbation}\,, we 
	control the growth of all the Sobolev norm $\|u(t)\|_{H^s}$ for all $s\geq0 \,$, and we infer the global well--posedness of the focusing \eqref{CS+} in all $H^s_+(\T)$\,, $s>\frac32$ for arbitrary initial data. 
	In addition,  by the same manner, we deduce also  $H^\frac12$--bounds on the eigenfunctions $(f_n^{\,\eps,t})$ --inequality~\eqref{borne H12}-- implying that the flow $\mathcal{S}^+(t)$ can  be extended to $L^2_+(\T)$\,, for arbitrary initial data.
	Besides, if $u_0$ does not belong to $\mathcal{J}$\,, then we expect blow--up results in finite time $T\,.$

	\vskip1cm


\end{document}